\newtheorem{theorem}{Theorem}[section]
\newtheorem{corollary}[theorem]{Corollary}
\newtheorem{lemma}[theorem]{Lemma}
\theoremstyle{definition}
\newtheorem{remark}[theorem]{Remark}
\newtheorem{question}[theorem]{Question}
\newcommand{\tri}[1]{| \! | \! | {#1} | \! | \! |}
\newcommand{\N}{\mathbb{N}}
\newcommand{\R}{\mathbb{R}}
\newcommand{\Q}{\mathbb{Q}}
\newcommand{\triple}[1]{{\left\vert\kern-0.25ex\left\vert\kern-0.25ex\left\vert #1 
    \right\vert\kern-0.25ex\right\vert\kern-0.25ex\right\vert}}
\begin{document}
	\numberwithin{equation}{section}
	
	% \title[Analogues of Krein-Milman]{Order extreme points, solid convex hull, and lattice analogues of Krein-Milman Theorem}
	
	\title{A Separable Universal Homogeneous Banach Lattice}
	
	\author{ M.A.~Tursi}
	
	\address{ Dept.~of Mathematics, University of Illinois, Urbana IL 61801, USA}
	\email{gramcko2@illinois.edu}
	
	\date{\today}
	
	%\subjclass[2010]{46B22, 46B42}
	
	%\keywords{Banach lattice, extreme point, convex hull, Radon-Nikod{\'y}m Property}
	
	%\dedicatory{To the memory of Victor Lomonosov}
	
	% \thanks{We wish to thank the organizers of Positivity IX, where part of this work was carried out.}
	
	% \date{}
	\maketitle
	
	\parindent=0pt
	\parskip=3pt
	
	\begin{abstract}
		We prove the existence of a separable approximately ultra-homogeneous Banach lattice $\mathfrak{BL}$ that is isometrically universal for separable Banach lattices.  This is done by showing that the class of Banach lattices has the Amalgamation Property, and thus finitely generated Banach lattices form a metric Fra\"iss\'e class. Some additional results about the structural properties of $\mathfrak{ BL}$ are also proven.
	\end{abstract}

	\maketitle
	\thispagestyle{empty}

\section{Introduction} 

This paper explores homogeneity in the class of separable Banach lattices.  We prove the existence of isometrically universal lattices for appropriate classes with varying levels of homogeneity and give some initial results about the structure of such spaces.  \\

Some comparable results are already known in Banach space theory.  The Gurarij space $\mathfrak G$ is an isometrically universal separable Banach space with the following homogeneity property:  for any finite dimensional spaces $A \subseteq B$, any isometric embedding $f:A \rightarrow \mathfrak G$, and any $C > 1$, there exists a map $g:B\rightarrow \mathfrak G$ extending $f$ such that $\frac{1}{C} \| x \| \leq \|g(x) \| \leq C \|x\|$ for all $x\in B$. Such separable Banach spaces are isometrically unique (see \cite{lusky}, as well as a simplified proof by Kubi\'s and Solecki in  \cite{kubis13}).  An alternate construction by Ben Yaacov characterizing $\mathfrak G$ as a metric Fra\"iss\'e limit is found in \cite{ben15}. As a Fraisse limit, $\mathfrak G$ has another kind of homogeneity that strengthens isomorphic embeddings with small distortion to isometric embeddings while sacrificing full commutativity.  In other words, given finite dimensional spaces $A\subseteq B$, an isometric embedding $f:A \rightarrow \mathfrak G$, and $\varepsilon > 0$, there exists an isometric embedding $g:B\rightarrow \mathfrak G$ such that $\|f - g|_A\| < \varepsilon$.  Since $\mathfrak G$ is a Fra\"iss\'e limit, it is also isometrically unique among separable spaces with this property. \\

We prove the lattice analogue of the above stated result.  Using Fra\"iss\'e machinery, we show that there is a unique isometrically universal separable Banach lattice $\mathfrak{ BL}$ with the following homogeneity property:  for any lattices $A\subseteq B$ generated by finitely many elements and lattice isometric embeddings $f:A\rightarrow \mathfrak{ BL}$, for all $(a_1,..., a_n) \subseteq A$ generating $A$, and for all $\varepsilon > 0$, there exists a lattice isometric embedding $g:B \rightarrow \mathfrak{BL} $ such that for each $a_i$, $\|f(a_i) - g(a_i) \|< \varepsilon $ (Theorem \ref{t:lattices-are-fraisse}).  The key to this result is the fact that Banach lattices have the Amalgamation Property (Theorems \ref{t:lattice-amalg} and \ref{t:arbitrary-lattice-amalg}). Observe that if $A$ and $B$ are finite dimensional, we can strengthen almost commutativity of the diagram restricted to generators to almost commutativity in norm.  $\mathfrak{ BL}$ can also be constructed as an inductive limit of $\ell_\infty^m(\ell_1^n)$ lattices, paralleling the construction of the Gurarij space as a limit of $\ell_\infty^n$ spaces (Theorem \ref{t:finitely-branchable-fraisse}).  \\

$\mathfrak{ BL}$ does not have the homogeneity property that originally characterized $\mathfrak G$, however, because in certain cases one cannot extend a lattice isometric embedding in a way that preserves both lattice structure and full commutativity in the separable setting.  In addition, even though $\mathfrak G$ can be "almost" homogeneous in either of the forms mentioned above, it cannot fully homogeneous in the sense of requiring both isometric embeddings and full commutativity of the diagram. Since it is unique, no separable spaces can have this stronger property. There exist non-separable Banach spaces, however, that are fully homogeneous, not just for the class of finite dimensional spaces, but also for separable spaces.  Such spaces, referred to as spaces of universal disposition, are constructed by Avil\'es, S\'anchez,  Castillo, and Moreno in \cite{avi11}.  A different construction (which assumes the CH) using Fra\"iss\'e sequences is given in  \cite{kubis}, where uniqueness is also established. Very recently, Avil\'es and Tradecete also constructed a (necessarily non-separable) lattice of universal disposition for separable lattices \cite{AvilesTradecete2020}. \\ 

Homogeneity in sublcasses of Banach lattices has been recently explored at length by Ferenczi, Lopez-Abad, Mbombo, and Todorcevic \cite{fer20}. This paper treats on various levels of homogeneity in $L_p$ Banach spaces, but it also explores lattice homogeneity. Specifically, for $1 \leq p < \infty$, the separable spaces $L_p(0,1)$ are Fra\"iss\'e limits for $\ell_p^n$ spaces with lattice embeddings as corresponding maps. The authors also construct an approximately ultra-homogeneous $M$-space for the class of finite dimensional $M$ spaces.  \\

Outside of the Banach lattice setting, homogeneous structures have been found for various classes.  Using injective objects, Lupini proved the existence of homogeneous structures for the classes  of function systems, $p$-multinormed spaces, and $M_q$-spaces \cite{lup16}. Certain $C^*$-algebras can also be constructed as Fra\"iss\'e limits of appropriate classes with relaxed conditions, including all UHF algebras, the hyperfinite II$_1$-factor \cite{Eagle16}, the Jiang-Su algebra \cite{masu16}, and more recently, a projectively universal AF-algebra constructed in \cite{ghas20}. 

\section{Preliminaries}\label{s:preliminaries}
We introduce definitions and notation that will be used in this paper.  For general information about Banach lattices, we refer the reader to \cite{mey,linden2}.  Throughout the paper, all Banach lattices are assumed to be real. \\

Given a Banach lattice $X$, we let $\mathbf B(X)$ and $\mathbf S(X) $ respectively refer to the unit ball and unit sphere of $X$. For $A \subseteq X$, let $A_+$ denote the elements of $A$ in the positive cone, and let $BL(A)$ be the Banach lattice generated by $A$. We say that $x\in A$ is an \textit{order extreme point} of $A$ if for all $y,z \in A$, if $x \leq ty - (1-t)z$ for $0 < t <1$, then $x = y = z$. A set $A\subseteq X$ is called \textit{solid} if for all $a\in X$ and $b\in A$, if $|a|:= a \vee (-a) \leq |b|$, then $a \in A$.  It turns out that for any solid set $A\in X$, a point $a \in A$ is an extreme point if and only if $|a|$ is order extreme \cite[Theorem 19.2]{OT19}. We also let $EP(A)$ be the set of  extreme points of $A$, let $OEP(A)$ be the set of order extreme points of $A$, and let $SCH(A) $ ($CSCH(A)$) be the (closed) solid convex hull of $A$. \\

The following is largely taken from \cite{ben15}: let $\mathcal L$ be a collection of symbols. These can be either \textit{predicate symbols} or \textit{function symbols}.  Each predicate or function symbol has an associated number called its \textit{arity}. We then call $\mathfrak A$ with associated metric space $A$ an \textit{$\mathcal L$-structure} if 

\begin{enumerate}
	\item For every predicate symbol $R$ with arity $n$, there is a continuous interpretation $R^\mathfrak{A}: A^n \rightarrow \mathbb R$.  We can also consider the distance to be a binary symbol (found in every structure).
	
	\item For every function symbol $f$ with arity $n$, we have a continuous interpretation $f^\mathfrak{A}: A^n \rightarrow A$.  Note that if a function symbol $c$ has $0$-arity, then it is a \textit{constant symbol}, and $c^\mathfrak{A} \in A$.
\end{enumerate}

These are different from the typical definitions of $\mathcal L$-structures in continuous logic as found in \cite{ben06}, the latter which require uniform continuity for functions and predicates but do not require that $X$ and $Y$ be bounded. The theory of Banach lattices can be formulated in the language $\mathcal L = (+, \R, \wedge, \vee)$. In particular, its function and predicate symbols have corresponding moduli of uniform continuity which are independent of their interpretation in a particular lattice. Given $ \overline{x} = (x_1,...,x_n)$, $ \overline{y} = (y_1,...,y_n) \subseteq X$, where $X$ is a metric space, we let  $$d( \overline{x},  \overline{y}) = \max_{i\leq n} d(x_i,y_i). $$ 
 
As in \cite[Chapter 2]{ben06}, we define the modulus of uniform continuity.  A function $\Delta_f: \R^+ \rightarrow (0,1]$ is a \textit{modulus of uniform continuity} for a $\mathcal L$-function or predicate symbol $f$ of arity $n$ if for all $\mathcal L$-structures $ \mathcal M$ and $\overline{x}, \overline{y} \in \mathcal M^n$, $d(\overline{x}, \overline{y}) < \Delta_f(\varepsilon) $ implies that $d(f^\mathcal{M}(\overline{x}), f^\mathcal{M}(\overline{y})) < \varepsilon$. For example, the function symbol $\wedge$ has modulus of continuity $\Delta (\varepsilon) = \frac14 \varepsilon $. That is, given a lattice $X$ and $(x_1, y_1)$ and $(x_2, y_2) \in X^2$, if $d((x_1,y_1), (x_2,y_2)) < \frac{1}{4}\varepsilon$, then $\| x_1 \wedge y_1 - x_2 \wedge y_2 \| < \varepsilon$. The definition of moduli of continuity in the appendix in \cite[Chapter 2]{ben06} assumes that moduli have domains restricted to $(0,1]$ but we can extend such functions to $\R^+$ by letting $\Delta_f(r) = \Delta_f(1)$ for all $r > 1$. Propositions  2.4 and 2.5 in \cite{ben06} show that compositions of uniformly continuous real functions and $\mathcal L$-function and $\mathcal L$-predicate symbols also have corresponding moduli of uniform continuity, since they are also uniformly continuous.\\

 We say that $A$ is a \textit{substructure} of $B$ if $A$ is a closed subset of $B$ which is also closed under all combinations of the function symbol operations.  For Banach lattices, $X$ is a substructure of $Y$ if it is a sublattice of $Y$. Let $f:A\rightarrow B$ be a map between two $\mathcal L$ structures.  If $f$ preserves norms, function operations, and predicate symbols in $\mathcal L$, then $f$ is considered a \textit{embedding}. \\

Let $\phi: X\rightarrow Y$ be a map between two Banach lattices.  We say that $\phi$ is a \textit{lattice homomorphism} if it is a bounded linear map that also preserves the lattice operations (i.e., $\phi(x\wedge y) = \phi(x) \wedge \phi(y)$).  To check whether a linear map is also a lattice homomorphism, by \cite[Theorem 1.34]{abram}, it is enough to check that it is positive ($x \geq 0 \implies \phi(x) \geq 0$) and preserves disjointness.  That is, if $x,y\in X$ and $x\perp y$ (i.e., $|x| \wedge |y| = 0$), then $\phi(x) \perp \phi(y)$.     For $C\geq 1$, we say that $\phi$ is a \textit{ $C$-embedding}, if for all $x\in X$, $\frac{1}{C} \|x\| \leq \| \phi(x) \| \leq C\|x\|$.  If $C = 1$, then $\phi$ is an embedding between Banach lattice structures, so we simply call it an embedding.  In subsequent sections, since this paper mainly deals with Banach lattices, we refer to lattice embeddings simply as embeddings. If a $C$-embedding is also surjective, then it is called an $C$-\textit{isometry}, and if $C = 1$, it is simply an isometry.  Observe that for any $C \geq 1$, if $\phi:X\rightarrow Y$ is a $C$-embedding, then $\phi(X)$ is a sublattice of $Y$, $\phi$ is a $C$-isometry from $X$ onto $\phi(X)$, and $\phi^{-1}$ is a $C$-isometry from $\phi(X)$ onto $X$. \\

 Let $A_0 \subseteq B$.  We then let $<A_0>$ be the substructure generated by $A_0$.  This can be understood as the smallest set $A \subseteq B$ with $A_0 \subseteq A$ and $A$ a substructure of $B$. We say that $A$ is \textit{finitely generated} if there exist $(a_1,...a_n) \subseteq A$ such that $A = <(a_1,...,a_n)>$.  Suppose that $\mathcal K$ is a class of finitely generated $\mathcal L$-structures.  If $A \in \mathcal K$, we say that $A$ is a \textit{$\mathcal{K}$- structure} if every finitely generated substructure of $A$ is also in $\mathcal K$. \\

 For $ A = < \overline{a} > $ and $ B = < \overline{b}>$ with $| \overline{a} | = | \overline{b}|$, we define 

\[ d^\mathcal{K} ( \overline{a},  \overline{b}) = \inf_{\substack{\phi_1:A\rightarrow C \\ \phi_2:B \rightarrow C}} d( \phi_1 (  \overline{a}), \phi_2 ( \overline{b}))  ,  \]

where $\phi_1$ and $\phi_2$ are both embeddings into some ambient $\mathcal K$-structure $C$. If we clearly understand generating tuples $ \overline{a}$ and $ \overline{b}$ for lattices $A$ and $B$ to be in some larger ambient space without necessary reference to explicit embeddings, we just write $d( \overline{a},  \overline{b})$ instead of $d( \phi_1 (  \overline{a}), \phi_2 ( \overline{b}))$. \\

Let $\mathcal K$ be a class of finitely generated structures.  We then say $\mathcal K$ is \textit{Fra\"iss\'e} if:
\begin{itemize}
	\item  $\mathcal K$ has the \textit{Hereditary Property} (HP): every member of $\mathcal K$ is a $\mathcal K$-structure.  
	
	\item $\mathcal K$ has the \textit{Joint Embedding Property} (JEP): any two $\mathcal K$-structures embed into a third.  (Note that if $\mathcal K$ has the JEP, then $d^\mathcal{K}$ is defined for all pairs of tuples in $\mathcal K$ of the same length.)
	\item $\mathcal K$ has the \textit{Near Amalgamation Property } (NAP): for any structures $ A =< \overline{a}>$, $ B_1$ and  $ B_2$ in $\mathcal K$ with embeddings $f_i:  A \rightarrow B_i$, and for all $\varepsilon > 0$, there exists a $ C \in \mathcal K$ and embeddings $g_i :  B_i \rightarrow  C$ such that $$d(g_1\circ f_1( \overline{a}), g_2\circ f_2 ( \overline{a}) ) < \varepsilon.$$ If  $g_1 \circ f_1 = g_2 \circ f_2$, then we just say that $\mathcal K$ has the Amalgamation Property (AP). Clearly the AP implies the NAP.

	\item  $\mathcal K$ has the \textit{Polish Property} (PP): if $\mathcal K$ has the JEP, HP and NAP, then $d^\mathcal{K} $ is a pseudo-metric over $\mathcal K$. If $d^\mathcal{K}$ is separable and complete in $\mathcal K_n$ (the $\mathcal K$-structures generated by $n$ many elements):
	
	\item $\mathcal K$ has the \textit{Continuity Property} (CP): every symbol in $\mathcal L$ is continuous on $\mathcal K$: that is, for function symbols, the map $( \overline{a}, \overline{b}) \mapsto ( \overline{a}, \overline{b}, f^{( \overline{a})}( \overline{a}))$ is continuous, and for predicate symbols $P$, the map $ \overline{a} \mapsto P^{ \overline{a}}( \overline{a})$ is continuous.
\end{itemize}

By \cite[Theorem 3.21]{ben15}, if $\mathcal K$ is Fra\"iss\'e, there exists a separable space $\mathfrak M$, known as the \textit{Fra\"iss\'e limit}, that is universal for $\mathcal K$  and \textit{approximately ultra-homogeneous} on $\mathcal K$.   That is, for all finitely generated structures $ A =< \overline{a}> \subseteq \mathfrak M$, embeddings $f:  A \rightarrow \mathfrak M$, and  $\varepsilon > 0$, there exists an automorphism $\phi: \mathfrak M \rightarrow \mathfrak M$ such that $d(f( \overline{a}), \phi( \overline{a})) < \varepsilon$.  Conversely, if a space $\mathfrak M$ is approximately ultra-homogeneous, its finitely generated substructures form a Fra\"iss\'e class, and $\mathfrak M$ is its limit.  Such a space is also isometrically universal for all separable $\mathcal K$ structures (including those which are not finitely generated).  \\

Instead of the PP and CP, a class $\mathcal K$ may have the following weakened conditions:

\begin{itemize}
	\item The \textit{Weak Polish Property} (WPP): the metric $d^{\mathcal K}$ is separable (but not necessarily complete)
	\item The \textit{Cauchy Continuity Property} (CCP): the map $( \overline{a}, \overline{b}) \mapsto ( \overline{a}, \overline{b}, f^{( \overline{a})}( \overline{a}))$ sends $d^\mathcal{K}$- Cauchy sequences to Cauchy sequences, and for predicate symbols $P$, the map $ \overline{a} \mapsto P^{ \overline{a}}( \overline{a})$ sends Cauchy sequences to Cauchy sequences,
\end{itemize}

If $\mathcal K$ has the HP, JEP, and NAP in addition to the two conditions above, then $\mathcal K$ is an \textit{incomplete Fra\"iss\'e class}.  A relevant example is that of finite dimensional $\ell_p$ spaces  (\cite[Section 4.2]{ben15} gives a brief discussion).  These have a (unique) Fra\"iss\'e limit of their completion, which is the class of separable $L_p$ spaces, and the limit is $L_p(0,1)$.  See \cite[Proposition 3.7]{fer20} for a recent proof using tools from analysis. \\

The notion of universal disposition in Banach space theory can also be studied with Banach lattices. Let $\mathcal C$ be a class of Banach lattices. A lattice $X$ is of \textit{approximately universal disposition} for a class $\mathcal C$ with lattices defined by finitely many elements if for all $A \in \mathcal C$ and for all embeddings $f:A\rightarrow X$, $g:A \rightarrow B$, with $A \in \mathcal{ C}$ defined by $ \overline{a}$ and $B\in \mathcal{  C}$, and for all $\varepsilon > 0$, there exists an $(1+\varepsilon)$-embedding $h:B \rightarrow X$ such that $\| h\circ g ( \overline{a}) - f( \overline{a}) \| < \varepsilon$.  Approximate universal disposition relaxes the condition in approximate ultra-homogeneity of the existence of an embedding down to a $(1+\varepsilon)$-embedding for arbitrarily small $\varepsilon$.\\

Definition by finitely many elements in lattices can occur in more than one way. One can speak, for example, of finite generation in the context of the logic of metric structures.  On the other hand, one might refer to finite dimensional lattices. For spaces of approximately universal disposition, if the class in question is finite dimensional lattices, we can let the finitely many atoms define the lattice's basis rather than generators doing so (in fact any finite dimensional lattice can be generated by two elements: see Theorem \ref{t:fb-implies-fg}), and we can strengthen the requirement that  $\| h\circ g ( \overline{a}) - f( \overline{a}) \| < \varepsilon$ to a norm requirement that $\| h \circ g - f \| < \varepsilon$. \\

Throughout the paper we rely on the notion of finite branchability. Let $E$ be a Banach lattice.  Let $(A_n)_n$ be a sequence of finite non-empty sets, and let $T = \cup_{k=0}^\infty \prod_{n=1}^{k} A_n$ be the tree generated by them. Suppose also that $(x_\sigma)_{\sigma \in T} \subseteq E_+$. We then say that $(x_\sigma)$ is a \textit{finitely branching tree} in $E_+$ if for all $\sigma$ with $|\sigma| = k$, $(x_{(\sigma^\frown b)})_{|\sigma| = k}$ is disjoint, and \[ x_\sigma = \sum_{b \in A_{k+1} } x_{(\sigma^\frown b)}. \]

Note that given the property outlined in the definition, $\text{span} (\{ (x_\sigma)_{\sigma \in T} \})$ is a vector lattice in $E$. If $\text{span} ( \{ (x_\sigma)_{\sigma \in T} \} )$ is dense in $E$ for some finitely branching tree $(x_\sigma)$, we call $E$ \textit{finitely branchable.}  Finitely  branchable lattices allow us to reduce problems involving finitely generated, but infinite dimensional lattices to that of finite dimensional lattices, since they are inductive limits of finite dimensional lattices.  It is also easy to show the other direction: If a lattice is the inductive limit of finite dimensional lattices, then it is finitely branchable. Finally, observe that finitely branchable lattices are separable.\\

Throughout, we will be working with two named classes of Banach lattices: let $\mathcal K$ be the class of finitely generated lattices, and $\mathcal K'$ be the class of sublattices of $\ell_\infty^m(\ell_1^n)$ spaces, with $m,n \in \N$. Let $\mathcal K_n \subseteq \mathcal K$ be the class of lattices generated by $n$ elements, and likewise for $\mathcal K'_n \subseteq \mathcal K' $.  Here we do not require that the generating elements be distinct or minimal.  We also will make use of the isometrically universal separable lattice $\mathcal U := C(\Delta, L_1[0,1])$ constructed in \cite{leungLi}. It turns out that $\mathcal U$ is finitely branchable and in particular is the inductive limit of an increasing union of lattices in $\mathcal K'$, which will be useful later on.  \\ 

We conclude this section with an outline of the rest of this paper. Section \ref{s:Amalg} explores the AP in Banach lattices and is split into two subsections.  In the first, we show show that any finite dimensional lattice can be approximated by a lattice in $\mathcal K'$ with arbitrarily small distortion (Lemma \ref{l:fd-into-1infty-l1}).  We use this result to prove an approximate amalgamation property for finite dimensional lattices (Theorem \ref{t:fd-approx-amalg}).  In particular, it is shown that $\mathcal K'$ has the AP. In the second subsection, we use the results in the first subsection to show that the class of Banach lattices has the AP (Theorems \ref{t:lattice-amalg} and \ref{t:arbitrary-lattice-amalg}).  The key to expanding the results on $\mathcal K'$ is the use of finitely branchable lattices.  We then end the section with some additional results on amalgamation over $C$-embeddings.  \\

In Section \ref{s:BL}, we prove the existence of a separable approximately ultra-homogeneous lattice $\mathfrak{ BL}$ by showing that $\mathcal K$ is a metric Fra\"iss\'e class and explore some of its structural properties (Theorem \ref{t:lattices-are-fraisse}).  The subclass $\mathcal K'$ is not just the first step to amalgamation; it is itself an incomplete Fra\"iss\'e class that is dense in the class of finitely generated separable lattices according to the Fra\"iss\'e metric (Lemma \ref{l:incomplete-fraisse}). We use this fact to show that $\mathfrak{ BL}$ is finitely branchable (Theorem \ref{t:finitely-branchable-fraisse}).  Finitely branchable lattices are themselves finitely generated (Theorem \ref{t:fb-implies-fg}), so unlike the Gurarij space, $\mathfrak{BL}$ is finitely generated, and in particular can be generated by two elements.\\

In Section \ref{s:sep-univ-dist}, we show that any separable lattice of approximately universal disposition for finitely generated lattices is isometric to $\mathfrak{BL}$ (Theorem \ref{t:1e-isometric-unique}).  We also construct lattices of approximately universal disposition for finite dimensional lattices and show that any such lattice which is also finitely branchable is isometric to $\mathfrak {BL}$ (Theorem \ref{t:finite-branch-unique}).  Finally, we show a self-similarity property of $\mathfrak{ BL}$: any non-trivial projection band in $\mathfrak{ BL}$ is isometric to $\mathfrak{ BL}$ (Theorem \ref{t:bands-are-also-BL}).

\section{Banach lattices and the Amalgamation Property} \label{s:Amalg}

The bulk of this section is dedicated to proving that the class of Banach lattices has the AP. As this paper was nearing its completion, Avil\'es and Tradecete independently proved that Banach lattices have the Amalgamation Property by generating pushouts using free Banach lattices (see \cite[Theorem 4.4]{AvilesTradecete2020}).  We give an alternative approach.  We first show that $\mathcal K'$ itself has the AP, and then expand this result to $\mathcal K$ and to lattices in general.

\subsection{The Amalgamation Property in $\mathcal K'$}
 We start with the following lemma:

\begin{lemma}\label{l:characterizing-K'}
Let $X$ be a finite dimensional lattice.  Then the following are equivalent:
\begin{enumerate}
	\item $OEP(\mathbf{B}(X))$ is finite.
	\item $EP(\mathbf{B}(X))$ is finite.
	\item $EP(\mathbf{B}(X^*))$ is finite.
\end{enumerate}
\end{lemma}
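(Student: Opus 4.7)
The plan is to reduce (1)$\iff$(2) to the correspondence $a \leftrightarrow |a|$ between extreme and order-extreme points in the solid set $\mathbf{B}(X)$ (a fact already recorded in Section \ref{s:preliminaries}), and to reduce (2)$\iff$(3) to the classical polar duality of polytopes in finite-dimensional convex geometry. No new lattice-theoretic machinery should be needed.

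For (1)$\iff$(2), $\mathbf{B}(X)$ is solid, so $a \in EP(\mathbf{B}(X))$ if and only if $|a| \in OEP(\mathbf{B}(X))$. Since $X$ is a finite-dimensional Archimedean vector lattice, it is lattice-isomorphic to $\R^n$ with the coordinatewise order and therefore has exactly $n$ atoms $e_1, \ldots, e_n$. Writing $p = \sum_i p_i e_i \in X_+$, the equation $|a| = p$ admits at most $2^n$ solutions, obtained by selecting signs on the support of $p$ in the atomic expansion. Hence the map $a \mapsto |a|$ from $EP(\mathbf{B}(X))$ to $OEP(\mathbf{B}(X))$ has fibers of uniformly bounded size, and by the cited iff it is onto $OEP(\mathbf{B}(X))$ (each order extreme point is its own absolute value). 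The two sets are therefore finite or infinite together, and in fact satisfy $|OEP(\mathbf{B}(X))| \leq |EP(\mathbf{B}(X))| \leq 2^n |OEP(\mathbf{B}(X))|$.

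For (2)$\iff$(3), the key observation is that in finite dimensions $\mathbf{B}(X^*)$ is precisely the polar of $\mathbf{B}(X)$, via the canonical identification $X^{**} = X$. Classical convex geometry tells us that a symmetric convex body in $\R^n$ has finitely many extreme points if and only if it is a polytope, and that the polar of a symmetric polytope with $0$ in its interior is again a polytope, with the vertex/facet roles exchanged. Consequently $\mathbf{B}(X)$ is a polytope if and only if $\mathbf{B}(X^*)$ is, which is exactly the equivalence of (2) and (3).

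The main obstacle is essentially bookkeeping rather than mathematical depth: one must carefully verify that $a \mapsto |a|$ actually meets every point of $OEP(\mathbf{B}(X))$ (which follows by applying the extreme/order-extreme iff in both directions, combined with the observation that $|p| = p$ when $p \in OEP(\mathbf{B}(X))$ is positive), and that the correct version of polar duality is invoked. Once these standard pieces are in place, the three equivalences fall out immediately.
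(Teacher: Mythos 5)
Your proposal is correct and follows essentially the same route as the paper: (1)$\iff$(2) via the extreme/order-extreme correspondence for solid sets from \cite[Theorem 19.2]{OT19} together with finite dimensionality (your $2^n$ fiber bound just makes explicit what the paper leaves implicit), and (2)$\iff$(3) via vertex--facet duality for polytopes, which is exactly what the paper unpacks by hand using Eggleston's half-space representation, Hahn--Banach separation, Milman's theorem, and reflexivity. No gaps.
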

\begin{proof}
	(1) is equivalent to (2) by finite dimensionality and Theorem 19.2 in \cite{OT19}.  To show that (2) implies (3), suppose $\mathbf B(X)$ has finitely many extreme points.  Then by Theorem 16 in \cite{egg58}, it is the intersection of finitely many closed half-spaces.  Let $f_1,...,f_m \in \mathbf{S}(X^*)$ such that $\mathbf B(X) = \{ x\in X: f_i(x) \leq 1 \text{ for all }1\leq i \leq m\}. $  Then $\| x\| = \max f_i(x)$ for all $x\in X$, so $B(X^*) =  CH\{f_1,..., f_m\}$.  Otherwise, if $g\in \mathbf S(X^*) \backslash CH\{f_1,..., f_m\}$, by the Hahn-Banach separation theorem there exists some $x\in \mathbf S(X)$ such that \[ \sup_i f_i(x) < g(x). \]
	By Milman's theorem, all the extreme points of $\mathbf B(X^*)$ are contained in $\{f_1,..., f_m\}$, so $\mathbf B(X^*) $ has finitely many extreme points.  By reflexivity of finite dimensional lattices, (3) implies (2) as well.
\end{proof}
 
\begin{lemma}\label{l:fd-into-1infty-l1}
	Let $X$ be a finite dimensional Banach lattice.  Then for all $C > 1$, there exists a $C$-embedding from $X$ into an $\ell_\infty^m(\ell_1^M)$ space for some $m$, with $M = \dim X$.  If, furthermore, $X$ has finitely many order extreme points, then $X$ embeds isometrically into $\ell_\infty^m(\ell_1^M)$ space for some $m$.
\end{lemma}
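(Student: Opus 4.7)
The strategy is to express the norm on $X$ through positive linear functionals and lift each such functional to a lattice homomorphism into $\ell_1^M$, then assemble the resulting maps via an $\ell_\infty^m$ outer direction. The $\ell_1^M$ factor is precisely what is needed to accommodate positive functionals that are not themselves lattice homomorphisms.

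Since $X$ is a finite-dimensional Archimedean Banach lattice, it is vector-lattice isomorphic to $\R^M$ with coordinatewise order, and it has a positive basis of atoms $e_1,\dots,e_M$. Consequently each coordinate functional $e_k^{*} \in X^{*}$ is a lattice homomorphism, and $X^{*}_+ = \{\sum_k f_k e_k^{*} : f_k \geq 0\}$ is a polyhedral cone. The basic identity
\[ \|x\| = \||x|\| = \sup\{f(|x|) : f \in \mathbf{B}(X^{*})_+\} \]
holds in any Banach lattice (using that $|f| \in \mathbf{B}(X^{*})$ with $|f|(y) \geq f(y)$ for $y \geq 0$). Given any collection $\{f_1,\dots,f_m\} \subseteq \mathbf{B}(X^{*})_+$, write $f_i = \sum_k f_{i,k}\, e_k^{*}$ with $f_{i,k}\ge 0$ and define $\phi_i : X \to \ell_1^M$ by $\phi_i(x) := (f_{i,1}\, e_1^{*}(x), \dots, f_{i,M}\, e_M^{*}(x))$. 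Each $\phi_i$ is a diagonal operator with nonnegative entries in the atom bases of $X$ and $\ell_1^M$, hence a lattice homomorphism, and a direct computation gives $\|\phi_i(x)\|_1 = f_i(|x|)$. The direct sum $\phi := (\phi_1,\dots,\phi_m) : X \to \ell_\infty^m(\ell_1^M)$ is then a lattice homomorphism with $\|\phi(x)\| = \max_i f_i(|x|)$.

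For the first (approximate) conclusion, choose $\{f_i\}$ to be a finite $\varepsilon$-net of the compact set $\mathbf{B}(X^{*})_+$ with $\varepsilon = 1 - 1/C$. The upper bound $\|\phi(x)\| \leq \|x\|$ is immediate, and for the lower bound pick, for each $x$ with $\|x\|=1$, some $f^{*} \in \mathbf{B}(X^{*})_+$ achieving $f^{*}(|x|) = 1$ and then an $f_i$ within $\varepsilon$ of $f^{*}$, yielding $\|\phi(x)\| \geq f_i(|x|) \geq 1-\varepsilon = 1/C$. This gives the desired $C$-embedding. For the isometric conclusion, Lemma \ref{l:characterizing-K'} shows that $\mathbf{B}(X^{*})$ has finitely many extreme points, hence is a polytope, so its intersection with the polyhedral cone $X^{*}_+$ is again a polytope. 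Taking $\{f_i\}$ to be the (finitely many) extreme points of $\mathbf{B}(X^{*})_+$, the supremum $\sup_f f(|x|)$ is attained at one of them by Bauer's maximum principle, so $\|\phi(x)\| = \|x\|$ exactly and $\phi$ is an isometric embedding into $\ell_\infty^m(\ell_1^M)$.

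The main conceptual obstacle is understanding how to convert arbitrary positive functionals into lattice homomorphisms. Lattice homomorphisms $X \to \R$ are only positive scalings of atomic functionals, so a generic positive $f \in \mathbf{B}(X^{*})_+$ cannot be realized as a lattice homomorphism into $\R$. The way around this is to spread $f$ diagonally across the $M$ disjoint atoms of $\ell_1^M$, where the sum $f(|x|) = \sum_k f_{i,k}|x_k|$ reappears as the $\ell_1$-norm of the image. Once this lift is established, the $\ell_\infty^m$ outer factor simply bundles enough such lifts to approximate or exactly realize the supremum defining $\|\cdot\|$, and the rest is a routine net / polytope argument.
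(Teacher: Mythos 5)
Your proposal is correct and follows essentially the same route as the paper: realize each positive functional $f_i=\sum_k f_{i,k}e_k^*$ as a diagonal lattice homomorphism into a copy of $\ell_1^M$ (the paper writes this as $\phi(e_j)=\sum_i a(i,j)u(i,j)$, which is the same map), bundle them with $\ell_\infty^m$, and take either an $\varepsilon$-net of the positive part of the dual ball or the finitely many (order) extreme points for the isometric case via Lemma \ref{l:characterizing-K'}. The only cosmetic differences are your explicit appeal to Bauer's maximum principle where the paper simply asserts $\|x\|=\sup_i x_i^*(|x|)$, and your use of extreme points of $\mathbf{B}(X^*)_+$ versus the paper's order extreme points of $\mathbf{B}(X^*)$, which coincide here.
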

\begin{proof}
	Suppose $\{x^*_1 ,..., x^*_m \} $ is an $\varepsilon$-net on $\mathbf S(X^*)_+$, where $\frac{1}{C} < 1 - \varepsilon$. Then for all $x\in \mathbf S(X)$, we have $ \frac{1}{C} < 1- \varepsilon \leq \sup_i x^*_i(|x|) \leq 1$.  Now  $X^*$ is also finite and is thus generated by its atoms, which are the evaluation functionals $e^*_i$ for the atoms $e_i \in X$, with $1\leq i \leq M$.  That is, if $x = \sum_j c_j e_j$, then $e^*_i(x) = c_i$.  These functionals form a basis in $X^*$, so we can assume $x^*_i = \sum_j a(i,j) e^*_j$, with $a(i,j) \geq 0$.  Based on this, consider the lattice  $\ell_\infty^m(\ell_1^M)$, and let $u(i,j) \in \ell_\infty^m(\ell_1^M)$ correspond to the $j$'th atom in the $i$'th copy of $\ell_1^M$.  Then let $\phi(e_j) = \sum_i a(i,j) u(i,j)$.  \\
	
	$\phi$ is a lattice homomorphism, since it is a positive linear map that maps atoms to disjoint elements.   It also is a $C$-embedding. Indeed, let $x = \sum c_j e_j \in \mathbf S(X)_+$ . Then $$\phi(x) = \sum_j c_j \phi (e_j) = \sum_j \sum_i c_j a(i,j) u(i,j),$$
	
	so $$\| \phi(x) \| = \sup_i \sum_j |c_j|a(i,j) = \sup_i \sum_j a(i,j) e^*_j (x) = \sup_i x^*_i(x).$$
	
	Thus $\frac{1}{C} \|x\| \leq \|\phi(x) \| \leq \|x\|$.\\
	
	If $\mathbf{B}(X)$ has finitely many order extreme points, then by Lemma \ref{l:characterizing-K'}, so does the dual unit ball $\mathbf{B}(X^*)$.  Let $\{x^*_1,..., x^*_m \} = OEP(\mathbf{B}(X^*))$. Then $\|x \| = \sup_{1\leq i \leq m} x^*_i(|x|)$.  Construct $\phi$ in the same way as above, and observe that $\| \phi(x) \| = \sup_i x^*_i (x) = \| x \|$.
\end{proof}

Lattices in $\mathcal K'$ play a key role in subsequent results on homogeneous lattices and their structure.  We present some of the notation that will be used in subsequent proofs:  Suppose $F\in \mathcal K'$, and let $(e_1,...,e_m)$ be the atoms of $F$.  Let $f:F \rightarrow G:= \ell_\infty^N (\ell_1^M)$ be a $C$-embedding, with $C \geq 1$. Let $u(k,j)$ be $j$'th atom in the $k$'th copy of $\ell_1^M$. We then have, for each $e_i \in F$, that $f(e_i) = \sum_{k,j} a^i(k,j) u(k,j)$.  Note that $f$ maps atoms to disjoint positive elements, so we can just let $a^i(k,j) = a(k,j)$, and sum up only over atoms that support $f(e_i)$.  Specifically, we fix a row $k$ and let
$$F^k_{i} = \{ j \leq M: f (e_i) \wedge u(k,j) > 0 \}. $$

Then $$f(e_i) = \sum_k \sum_{j\in F_i^k} a(k,j) u(k,j).$$

Observe that $F$ and $f$ induce an $N \times m$ matrix $A^f_F$, with $A^f_F(k,i) = \sum_{j\in F_i^k} a(k,j)$. If $F_i^k$ is empty, then $A_F^f(k,i) = 0$.   It turns out the rows of $A_F^f$ capture $F$'s structure completely, while small distortions in $f$ imply small distortions in $A_F^f$.  We give a lemma to this effect. From now on, if we have two $C$-isometries  $f_j: F \rightarrow G_j$ with $j = 1,2$ be $C$-isometries with $C\geq 1$, with $G_1$ and  $G_2$ both $\ell_\infty^N (\ell_1^M)$ spaces, we just let $A = A_F^{f_1}$ and $B = A_F^{f_2}$.  For $1 \leq l \leq N$, we also let $A(l) = (A(l,1), A(l,2),..., A(l,m)) $ and  $B(l) = (B(l,1), B(l,2),..., B(l,m)) $.

\begin{lemma} \label{l: perturbations-in-unitball}
	Let $f_j: F \rightarrow G_j$ with $j = 1,2$ be $C$-isometries with $C\geq 1$, and suppose $G_1$ and $G_2$ be $\ell_\infty^N (\ell_1^M)$ spaces. Then for all rows $l$, we have $A(l) \in C^2SCH(\{B(k): 1\leq k \leq N\})$.  In particular, if each $f_j$ is an embedding, then $SCH(\{B(k): 1\leq k \leq N\}) = SCH(\{A(k): 1\leq k \leq N\})$.
\end{lemma}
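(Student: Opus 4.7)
The plan is to interpret each row $A(l)$ as a positive linear functional on $F$ and then promote a pointwise $\max$-inequality into a single dominating convex combination via Hahn--Banach in $\R^m$.

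First I would note that since $(e_1, \ldots, e_m)$ are the atoms of $F$ and $f_1$ maps them to pairwise disjoint positive elements of $G_1 = \ell_\infty^N(\ell_1^M)$, the prescription $\phi_l(e_i) := A(l,i)$ extends to a positive linear functional $\phi_l$ on $F$ satisfying
\[
\phi_l(x) \;=\; \|P_l f_1(x)\|_{\ell_1^M} \qquad (x \in F_+),
\]
where $P_l$ denotes the band projection of $G_1$ onto its $l$-th $\ell_1^M$ summand. Taking the maximum over $l$ recovers $\|f_1(x)\|_{G_1}$ on positive elements, and the analogous functionals $\psi_k$ built from $B(k)$ satisfy $\max_k \psi_k(x) = \|f_2(x)\|_{G_2}$ for $x \in F_+$. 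The $C$-isometry hypothesis then gives $\max_l \phi_l(x) \le C\|x\|$ and $\max_k \psi_k(x) \ge \tfrac{1}{C}\|x\|$ on $F_+$, so
\[
\phi_l(x) \;\le\; C\|x\| \;\le\; C^2 \max_k \psi_k(x) \qquad \text{for all } x \in F_+.
\]

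The key step is to upgrade this pointwise $\max$-estimate to a single convex combination of the $\psi_k$. Set $K := C^2 \cdot \ch(\{\psi_1,\ldots,\psi_N\}) - \R^m_+ \subseteq \R^m$, a closed convex set (Minkowski sum of a compact convex set and a closed convex cone). If $\phi_l \notin K$, Hahn--Banach separation in $\R^m$ produces $x$ with $\langle \phi_l, x\rangle > \sup_{y \in K}\langle y, x\rangle$; because $K - \R^m_+ = K$, the supremum is finite only when $x \ge 0$, and then identifying $x$ with $\sum_i x_i e_i \in F_+$ yields $\phi_l(x) > C^2 \max_k \psi_k(x)$, contradicting the inequality above. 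Hence there exist $\lambda_k \ge 0$ with $\sum_k \lambda_k = 1$ and $\phi_l \le C^2 \sum_k \lambda_k \psi_k$ coordinatewise, which translates to $A(l) \le C^2 \sum_k \lambda_k B(k)$. Since all entries are nonnegative, this places $A(l)$ in $C^2\cdot \sch(\{B(k)\})$.

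For the ``in particular'' clause with $C = 1$, each $A(l)$ lies in the solid convex set $\sch(\{B(k)\})$, hence $\sch(\{A(l)\}) \subseteq \sch(\{B(k)\})$; swapping the roles of $A$ and $B$ gives the reverse inclusion. The part I expect to need the most care is the separation step: one must verify that $K$ is closed and that the separating vector can be chosen in $\R^m_+$, so that it represents a genuinely positive element of $F$ that can legitimately be fed back into the pointwise inequality. Compactness of $\ch(\{\psi_1,\ldots,\psi_N\})$ and the fact that $K$ absorbs $-\R^m_+$ address both points.
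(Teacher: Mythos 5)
Your proof is correct and follows essentially the same route as the paper: evaluate both embeddings on positive combinations of the atoms, sandwich via the $C$-isometry hypothesis to get $\phi_l(x)\le C^2\max_k\psi_k(x)$ on $F_+$, and then separate to conclude membership in $C^2$ times the solid convex hull. The only difference is that you carry out the positive-separation step by hand (via the closed set $C^2\,\mathrm{CH}(\{\psi_k\})-\R^m_+$ and Hahn--Banach), whereas the paper delegates exactly this step to \cite[Proposition 19.7]{OT19}.
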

\begin{proof}	
 Let $r\in \mathbf{B}(\ell_\infty^M)_+$. Then there exists some row $k$ such that for all rows $l$, $\frac{1}{C}\sum r_n B(l,n) \leq  \|\sum r_ne_n \| \leq C\sum r_n A(k,n)$. Now that $ B(l) \notin C^2 SCH(\{A(k): k \leq N\})$ for some $l$.  Then by \cite[Proposition 19.7]{OT19} there exists some $r \in \mathbf{B}(\ell_\infty^M)_+$ such that \[ C^2 \sup_{y \in SCH({A(k)})} \sum r_n y_n <  \sum r_n B(l,n) \leq C^2 \sum r_n  A(k,n) \] 
for some $k$, which is a contradiction. 
\end{proof}

In the case that $C = 1$, recall that the construction in Lemma \ref{l:fd-into-1infty-l1} used $N$ rows of $\ell_1^M$ to correspond to the $N$ order extreme points in the unit ball of $X^*$.  Since we can think of the rows in $A_F^f$ as elements in the dual space $F^*$, where $A(l) (\sum c_i e_i) = \sum c_i A(l,i)$, then by Lemma \ref{l: perturbations-in-unitball}, we actually have $SCH(\{A(l)\}) = \mathbf B(F^*)$.  In particular, any $F\in \mathcal K'$ has finitely many order extreme points.  Combined with Lemma \ref{l:characterizing-K'}, we thus have the following result:

\begin{corollary}\label{c:4-equvalences}
 The four following properties are equivalent for finite dimensional lattices $X$:	
	\begin{enumerate}
	\item $OEP(\mathbf{B}(X))$ is finite.
	\item $EP(\mathbf{B}(X))$ is finite.
	\item $EP(\mathbf{B}(X^*))$ is finite.
	\item $X \in \mathcal K'$.
	\end{enumerate}	
\end{corollary}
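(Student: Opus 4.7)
The plan is to graft $(4)$ onto the cycle $(1) \Leftrightarrow (2) \Leftrightarrow (3)$ already supplied by Lemma \ref{l:characterizing-K'}, so it suffices to close the loop via $(1) \Rightarrow (4) \Rightarrow (3)$.

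The implication $(1) \Rightarrow (4)$ is immediate: assuming $OEP(\mathbf{B}(X))$ is finite, the second statement of Lemma \ref{l:fd-into-1infty-l1} produces an isometric lattice embedding of $X$ into some $\ell_\infty^m(\ell_1^{\dim X})$ space, placing $X$ in $\mathcal K'$.

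For $(4) \Rightarrow (3)$, the strategy is to realize $\mathbf{B}(X^*)$ as the solid convex hull of finitely many positive functionals, forcing it to be a polytope with finitely many extreme points. Fix any isometric embedding $f: X \hookrightarrow \ell_\infty^N(\ell_1^M)$ and form the associated matrix $A = A_X^f$ as in the discussion preceding Lemma \ref{l: perturbations-in-unitball}. Because $f$ is a lattice map, the sets $F_i^k$ are pairwise disjoint in $i$ for each fixed $k$, so a direct calculation gives $\|\sum_i c_i e_i\| = \sup_k A(k)\bigl(\sum_i |c_i| e_i\bigr)$, where each row $A(k)$ is interpreted as a positive functional on $X$. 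The containment $SCH(\{A(k) : 1 \leq k \leq N\}) \subseteq \mathbf{B}(X^*)$ follows via the lattice identity $\sup_{|y^*| \leq z^*} y^*(x) = z^*(|x|)$ combined with the triangle inequality. For the reverse containment, I would replicate the separation argument used inside the proof of Lemma \ref{l: perturbations-in-unitball}: if some $x^* \in \mathbf{B}(X^*) \setminus SCH(\{A(k)\})$ existed, then \cite[Proposition 19.7]{OT19} would supply $r \in \mathbf{B}(\ell_\infty^{\dim X})_+$ with $\sup_{y \in SCH(\{A(k)\})} \sum_n r_n y_n < \sum_n r_n x^*(e_n)$. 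Setting $x = \sum_n r_n e_n \geq 0$, the left-hand side reduces to $\sup_k A(k)(x) = \|x\|$, while the right-hand side equals $x^*(x) \leq \|x^*\| \, \|x\| \leq \|x\|$, a contradiction.

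The main obstacle is this reverse containment $\mathbf{B}(X^*) \subseteq SCH(\{A(k)\})$, which requires a genuine Hahn--Banach separation rather than a bare triangle-inequality manipulation. The other ingredients --- invoking Lemma \ref{l:fd-into-1infty-l1} and expressing $\|\cdot\|$ in coordinates --- are essentially bookkeeping once the machinery from the preceding subsection is in place. I would remark that carrying out the separation carefully also fills in the elliptical step in the paragraph immediately before the corollary, where the identity $SCH(\{A(l)\}) = \mathbf{B}(F^*)$ is asserted by reference to Lemma \ref{l: perturbations-in-unitball} even though that lemma itself only compares two embeddings of a fixed lattice.
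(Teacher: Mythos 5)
Your proposal is correct and follows essentially the same route as the paper: the equivalence of (1)--(3) is quoted from Lemma \ref{l:characterizing-K'}, the implication $(1)\Rightarrow(4)$ is the second assertion of Lemma \ref{l:fd-into-1infty-l1}, and $(4)\Rightarrow(3)$ is obtained by identifying $\mathbf{B}(X^*)$ with $SCH(\{A(k)\})$, which is exactly the content of the paragraph preceding the corollary. Your explicit separation argument simply fills in the step the paper delegates to Lemma \ref{l: perturbations-in-unitball} and \cite[Proposition 19.7]{OT19}.
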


We now prove the following:

\begin{theorem}\label{t:fd-approx-amalg}
Suppose for $j=1,2$, $f_j: E\rightarrow F_j$ are $C$-embeddings with $F_1$ and $F_2$ in $\mathcal K'$ with $C\geq 1$. Then there exist  $G\in \mathcal K'$ and  $C$-embeddings $g_j:F_j \rightarrow G$ such that $g_1 \circ f_1 = g_2 \circ f_2$. That is, the following diagram commutes:
\begin{center}
	\begin{tikzcd}		
	E \arrow{r}{f_1} \arrow{d}{f_2} & \arrow{d}{g_1} F \\
	F_2 \arrow{r}{g_2} & G		
	\end{tikzcd}
\end{center}
 In particular,  $\mathcal K'$ has the AP.
\end{theorem}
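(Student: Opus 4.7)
The plan is to construct $G$ explicitly as a sublattice of an $\ell_\infty^N(\ell_1^L)$ space by amalgamating the atomic structures of $F_1$ and $F_2$. First I would use Corollary \ref{c:4-equvalences} to realize each $F_j$ as a sublattice of $\ell_\infty^{N_j}(\ell_1^{L_j})$ with atoms $v_j(k,l)$ for $k\le N_j$, $l\le L_j$. Note that $E$ must be finite-dimensional since it $C$-embeds into the finite-dimensional space $F_j$, so I may enumerate its atoms $e_1,\dots,e_M$ and expand $f_j(e_m)=\sum_{k,l}\alpha_j(m,k,l)\,v_j(k,l)$; by disjointness of the family $\{f_j(e_m)\}_m$, for each triple $(j,k,l)$ at most one index $m$ yields a positive coefficient. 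The partitions $P_{j,m}=\{(k,l):\alpha_j(m,k,l)>0\}$ together with the weights $\alpha_j(m,k,l)$ carry all the combinatorial data needed for the construction.

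Next I would build $G$ by splitting atoms appropriately. Let $G$ have rows indexed by the disjoint union $\{1,\dots,N_1\}\sqcup\{1,\dots,N_2\}$. In a row $k$ coming from $F_j$, atoms $v_j(k,l)$ with $(k,l)\notin\bigcup_m P_{j,m}$ are promoted unchanged to atoms of $G$, while atoms $v_j(k,l)$ with $(k,l)\in P_{j,m}$ are refined into finer atoms of $G$ indexed by the atoms $v_{3-j}(k',l')\in P_{3-j,m}$ of the other space, with weights chosen so that the two natural expansions of $g_1(f_1(e_m))$ and $g_2(f_2(e_m))$ coincide as a single positive element $w_m\in G$. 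Define $g_j\colon F_j\to G$ as the lattice homomorphism sending each atom $v_j(k,l)$ to the sum of its refined atoms with the prescribed weights. Disjointness of images of distinct atoms (required for $g_j$ to be a lattice homomorphism) follows from the combinatorial layout, while commutativity $g_1\circ f_1=g_2\circ f_2$ on the atoms of $E$---and hence on all of $E$ by linearity---is built into the definition.

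The remaining task is to check that each $g_j$ is a $C$-embedding. I would do this by comparing the row-wise $\ell_1$-norms of $g_j(x)$ in $G$ with those of $x$ in $F_j\subseteq\ell_\infty^{N_j}(\ell_1^{L_j})$: the rows coming from $F_j$ contribute exactly $\|x\|_{F_j}$, so the point is to show that the rows coming from $F_{3-j}$ contribute at most $C\|x\|_{F_j}$, which should be controllable via the matrix comparison given by Lemma \ref{l: perturbations-in-unitball}. I expect the main obstacle to be precisely the choice of weights in the splitting step. A naive design that simply places a full copy of $f_{3-j}\circ f_j^{-1}(x)$ into the foreign rows would yield only a $C^2$-embedding, since $f_j$ itself has distortion $C$. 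To recover the sharper $C$-bound, one must exploit the fact that the common $G$-norm of $w_m$ is only constrained to lie in the intersection of the intervals $[C^{-1}\|f_j(e_m)\|_{F_j},\,C\|f_j(e_m)\|_{F_j}]$ for $j=1,2$, which is nonempty because $\|f_1(e_m)\|_{F_1}$ and $\|f_2(e_m)\|_{F_2}$ differ by at most a factor of $C^2$; distributing the available slack across the refined atoms in a balanced way should then yield the required $C$-embedding in both directions for both $g_1$ and $g_2$ simultaneously.
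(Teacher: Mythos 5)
Your combinatorial skeleton is essentially the paper's: the amalgam is built by refining each atom in the support of $f_j(e_m)$ according to the atoms supporting $f_{3-j}(e_m)$ (a tensor-product of the two ideals), and commutativity on the generators is then automatic. The genuine gap is in the normed part, which is where all the content of the theorem lives. You equip $G$ with the $\ell_\infty(\ell_1)$ norm over the disjoint union of the rows of $F_1$ and $F_2$ and propose to verify the embedding bound row by row; but the foreign-row contributions cannot be controlled the way you describe. Concretely, take $E$ two-dimensional with $\|e_1\|=\|e_2\|=1$, $F_1=\ell_\infty^2(\ell_1^2)$, $f_1(e_1)=u(1,1)+\epsilon u(2,1)$ and $f_1(e_2)=\epsilon u(1,2)+u(2,2)$, and any isometric $f_2$ (so $C=1$ and some $F_2$-row $l$ has $B(l,1)=1$). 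If the foreign mass of $g_1$ is distributed over the atoms supporting $f_1(e_1)$ in proportion to their coefficients --- the only choice compatible with ``rows from $F_1$ contribute exactly $\|x\|_{F_1}$'' plus commutativity --- then $x=u(1,1)+u(2,1)$ has $\|x\|_{F_1}=1$ but the row-$l$ contribution of $g_1(x)$ is $2/(1+\epsilon)$; with $N$ rows the distortion grows to $N$ even though $C=1$. The point is that an element can spread unit $\ell_\infty(\ell_1)$-mass over many rows of $F_1$ while the foreign-row functional sums over all of them. Fixing this forces the weights into a coupled system of linear constraints (indexed by all of $\mathbf{B}(F_1)$ and $\mathbf{B}(F_2)$, not just by the generators $e_m$), whose feasibility you do not address; your final paragraph only controls the norms of the single elements $w_m$, which is far from sufficient. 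Note also that the dual ball of a correct amalgam generally has more order extreme points than the rows of $F_1$ and $F_2$ combined, so the disjoint-union-of-rows ansatz is already suspect.

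The paper sidesteps all of this. It does not impose an $\ell_\infty(\ell_1)$ norm on $G$ at the outset: it declares $\mathbf{B}(G)=SCH\big(g_1(\mathbf{B}(F_1))\cup g_2(\mathbf{B}(F_2))\big)$, so that both $g_j$ are contractions by fiat, and the only thing to prove is the single lower bound $\|g_j(x)\|\geq C^{-2}\|x\|$, which is done via Lemma \ref{l: perturbations-in-unitball} applied inside the solid convex hull (not via row-wise $\ell_1$ comparisons). The passage from $C^2$ to $C$ --- the issue you try to handle by ``distributing slack'' --- is then a one-line trick: a contractive $C^2$-embedding $g_j$ yields a $C$-embedding $Cg_j$, and rescaling both maps by the same constant preserves commutativity. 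Only at the very end is $G$ placed into an $\ell_\infty^m(\ell_1^n)$ space, via Corollary \ref{c:4-equvalences}, with $m$ equal to the number of order extreme points of $\mathbf{B}(G^*)$ rather than $N_1+N_2$. If you want to salvage your route, you would need to either prove feasibility of your weight system directly (which I suspect is as hard as the theorem itself) or adopt the renorming-plus-rescaling device.
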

\begin{proof} 
	We can assume that $F_j = \ell_\infty^N (\ell_1^{M_j})$, where $M_j = \dim F_j$ for $j = 1,2$. Let $e_1,...,e_n$ be the atoms in $E$, and for $F_1$ and $F_2$, we let $u(k,j)$ and $v(k,j)$, respectively, correspond to the $j$'th atom in the $k$'th row (that is, the $k$'th copy of $\ell_1^{M_j}$).  For row $l$ and atom $i$, we let $$F^l_{1,i} = \{ j \leq M_1: f_1 (e_i) \wedge u(l,j) > 0 \}, $$ and similarly, we let $$F_{2,i}^l = \{ j \leq M_2:  f_2 (e_i) \wedge v(l,j) > 0 \}$$ 
	
	We now define $g_1$ and $g_2$.  Let $F'_j$ be the the lattice ideal in $F_j$ generated by $f_j(E)$, and let $ (F'_1\otimes F'_2) \oplus F_1\oplus F_2$ be understood as a vector lattice with atoms of the form $u(k,j) \otimes v(l,m)$, $u(k,j)$, and $v(l,m)$.   For $u(k,j)$ with $j \in F_{1,k}^i$, let $g_1(u(k,j)) = u(k,j)\otimes   f_2(e_i)$.  If $u(k,j)\notin F_{1,k}^i$ for any $i$, let $g_1(u(k,j)) = u(k,j)$.  For $v(l,m)\in F_{2,l}^i$, let $g_2(v(l,m)) =  f_1(e_i) \otimes v(l,m)$, and if $v(l,m)\notin F_{2,l}^i$ for any $i$, let $g_2(v(l,m)) = v(l,m)$.  First, we show that $g_1  \circ f_1 = g_2 \circ f_2$.  Indeed, we have 
	\begin{align*}
	g_1 \circ f_1\bigg(\sum_i c_i e_i\bigg) & =  \sum_i c_i g_1\bigg(\sum_k \sum_{j\in F_{1,k}^i} a(k,j) u(k,j) \bigg) \\
	& = \sum_i c_i \bigg( \sum_k \sum_{j\in F_{1,k}^i} a(k,j) u(k,j) \bigg) \otimes  f_2 (e_i) \\
	& = \sum c_i (f_1(e_i) \otimes  f_2 (e_i)), 	
	\end{align*}
	and similarly:
	\begin{align*}
	g_2 \circ f_2\bigg(\sum_i c_i e_i\bigg) & =  \sum_i c_i g_2\bigg(\sum_l \sum_{m\in F_{2,l}^i} b(l,m) v(l,m) \bigg) \\
	& = \sum_i c_i   f_1 (e_i) \otimes \bigg(\sum_l \sum_{m\in F_{2,l}^i} b(l,m) v(l,m) \bigg)  \\
	& = \sum c_i  (f_1(e_i)  \otimes  f_2 (e_i)). 
	\end{align*}
	
Let $G= BL ( g_1(F_1) \cup g_2(F_2))$, and let the unit ball of $G$  be $SCH\big(g_1(\mathbf{B}(F_1)) \cup g_2(\mathbf{B}(F_2)) \big)$. \\
	
Note that $g_1$ and $g_2$ are both contractive maps. We now show that they are also $C^2$-embeddings.  This will be sufficient, because then we can replace $g_1$ and $g_2$ with $Cg_1$ and $Cg_2$ while still preserving commutativity in the diagram.  These latter maps are themselves $C$-embeddings, thus proving the theorem. Since $g_i(\mathbf{B}(F_i))$ has only finitely many order extreme points, and since the resulting space is finite dimensional, $SCH\big(g_1(\mathbf{B}(F_1)) \cup g_2(\mathbf{B}(F_2)) \big)$ is also closed.  So we need only to show without loss of generality that if $x, y\in \mathbf S(F_1)_+$, $z\in \mathbf S(F_2)_+$, and $r   g_1(x) \leq tg_1(y) +(1-t)g_2(z)$ with $0\leq t \leq 1$, then $r   < C^2$.\\
	
	Suppose $x,y$ and $z$ are as above. Since for some $k \leq N$, $\big\| x \wedge \big( \sum_j u(k,j) \big) \big\| = 1$ and $g_1\big(x \wedge(\sum_j u(k,j))\big) \leq g_1(x)$, we can  assume that $x = \sum_j c_j u(k,j)$ with $\sum c_j = 1$. Furthermore, since 
	$$r   g_1(x) \leq tg_1(y) \wedge r   g_1(x) +(1-t)g_2(z) \wedge r   g_1(x), $$
	we can also assume that $ tg_1(y) \leq r  g_1(x) $, so $y = \sum d_j u(k,j)$, with $\sum d_j \leq 1$.  Finally, we can let $z = \sum_1^M \mu_n z_n$, where $z_n$ is an order extreme point in $F_2$; that is, there is a sequence $s^n =(s^n_l)_l$ of length $N$ such that $z_n = \sum_l v(l, s^n_l)$, and furthermore, $\mu_n > 0$ with $\sum \mu_n = 1$.  Then
	$$ 0 \leq g_1 (r  x - ty)  \leq g_2 \bigg( \sum_i \sum_l \sum_{n: s_l^n = m \in F_{2,l}^i}  \mu_n v(l, m) \bigg). $$

	Now both sides of the inequality are supported, and the left hand side fully supported, by atoms of the form $u(k,j)\otimes v(l,m)$ where  $j\in F^i_{1,k}$ and $m\in F^i_{2,l}$.   Thus, for any $u(k,j) \in {F'}^\perp_1$, we must have  $r  c_j -td_j = 0$, since $g_1({F'}^\perp_1)$ is disjoint from $g_2(F_2)$, and similarly $g_2({F'}^\perp_2)$ is disjoint from $g_1(F_1)$. Therefore 
	\[ r  x - ty =  \sum _i \sum _{j \in F_{1,k}^i} ( r  c_j - td_j ) u(k,j) \]
	
	Recall that for each coefficient the left hand side must be less than or equal to the right hand side.  Evaluating both sides, we thus have that
	\begin{align*} &\sum _i \bigg(\sum _{j \in F_{1,k}^i} ( r  c_j - td_j ) u(k,j)\bigg) \otimes f_2 (e_i) \\ 
	=  &\sum _i \bigg[\bigg(\sum _{j \in F_{1,k}^i} ( r  c_j - td_j ) u(k,j)\bigg) \otimes \bigg( \sum_l  \sum_{m\in F_{2,i}^l} b(l,m) v(l,m) \bigg)\bigg] \\
	= & \sum_i \bigg[ \sum_{j\in F_{1,k}^i} \sum_l \sum_{ m\in F_{2,i}^l }  (r  c_j - td_j)b(l,m) \ u(k,j) \otimes v(l,m) \bigg] \\
	\leq &(1-t)\sum_i \bigg[\bigg( f_1(e_i) \wedge \sum_j u(k,j)\bigg) \otimes \bigg(  \sum_l \sum_{n: s_l^n = m \in F_{2,l}^i}  \mu_n v(l, m)\bigg)\bigg] \\
	= &\sum_i (1-t)\bigg[\bigg( \sum_{j \in F_{1,k}^i} a(k,j)u(k,j) \bigg) \otimes \bigg(  \sum_l \sum_{n: s_l^n = m \in F_{2,l}^i}  \mu_n v(l, m)\bigg)\bigg]\\
	= & \sum _i \bigg[ \sum_{j\in F_{1, k}^i} \sum_l \sum_{m\in F_{2,l}^i} (1-t)a(k,j) \bigg(\sum_{n:s^n_l = m} \mu_n\bigg) u(k,j) \otimes v(l,m) \bigg]
	\end{align*}
	For each $i$, for all $j\in F_{1, k}^i$, for each $l$, and for all $m\in F_{2,l}^i$, the coefficient of $u(k,j) \otimes v(l,m)$ on the left hand side is $(r  c_j - td_j)b(l,m)$, and on the right hand side, we have $\displaystyle (1-t)a(k,j) \sum_{n: s_l^n = m} \mu_n$.  Thus 
	$$ (r  c_j - td_j)b(l,m) \leq (1-t)a(k,j) \sum_{n: s_l^n = m} \mu_n.$$
	
	Let $A = A_{E}^{f_1}$ and $B = A_{E}^{f_2}$, as defined prior to Lemma \ref{l: perturbations-in-unitball}.	Adding across all $m \in F_{2,l}^i$, we have:
	\begin{align*} (r  c_j - td_j) \sum_{m\in F_{2,l}^i} b(l,m) \leq (1-t)a(k,j) \sum_{m\in F_{2,l}^i} \sum_{n: s_l^n = m } \mu_n 
	\end{align*}
	
so $	(r  c_j - td_j) B(l,i) \leq (1-t)a(k,j) \lambda_l^i,$ where $$\lambda_l^i = \sum_{m\in F_{2,l}^i} \sum_{n: s_l^n = m } \mu_n.$$  Observe that $ \sum_i \lambda_l^i = 1$ for all rows $l$.  Add up terms over all $j \in F_{1,k}^i$.  Thus
	\begin{align*}
	\sum_{j\in F_{1,k}^i} (r  c_j - td_j) B(l,i) & \leq \sum_{j\in F_{1,k}^i} (1-t)a(k,j) \lambda_l^i \implies \\
	(r  C_i - tD_i) B(l,i) & \leq (1-t)A(k,i) \lambda_l^i,
	\end{align*}
	where $C_i = \sum_{j \in F_{1,k}^i} c_j$ and $D_i = \sum_{j\in F_{1,k}^i} d_j$. Now if $$C' = \sum_{u(k,j) \in {F'}^\perp_1} c_j \quad \text{and} \quad   D' = \sum_{u(k,j) \in {F'}^\perp_1} d_j, $$ then $\sum_i C_i +C' = 1$ and $\sum_i D_i +D' \leq 1$. Since $r  x - ty \geq 0$, it follows that $r  C_i - tD_i \geq 0$ and since for each $j$, $u(k,j) \in {F'}^\perp_1$ implies $rc_j -t_jd_j =0$, we have $ r  C' - tD' = 0$.  By Lemma \ref{l: perturbations-in-unitball}, there exists a finite sequence $(\nu_l)_{l=1}^{N}$ such that $A(k) \leq C^2\sum \nu_l B(l)$, with $\sum_l \nu_l = 1$ and $\nu_l \geq 0$.  Then in particular, 
	$$ (r  C_i - tD_i) A(k,i)  \leq C^2 (1-t)A(k,i) \sum_l \nu_l \lambda_l^i $$

	If $A(k,i) = 0$, then $C_i$ and $D_i$ are also 0, since $F_{1,k}^{i}$ is empty.  Otherwise $A(k,i) > 0$, so  for all $i$,
	\begin{align*}
	(r  C_i - tD_i) & \leq C^2 (1-t) \sum_l \nu_l \lambda_l^i \implies \\
	\sum_i (r  C_i - tD_i)+ r  C' - tD' & \leq  C^2 (1-t) \sum_l \sum_i \nu_l \lambda_l^i  \implies \\
	r - t \leq r- t(\sum_i D_i+D')& \leq  C^2 (1-t) \sum_l \nu_l (\sum_i \lambda_l^i) \implies \\
	r - t  & \leq C^2 (1-t) \implies \\
	r & \leq C^2
	\end{align*}
 Thus $g_1$ (and by similar argument $g_2$) is a $C^2$-embedding. \\
 
 Finally, $G$ itself has finitely many order extreme points, so by Lemma \ref{l:fd-into-1infty-l1} it can be embedded into a $\ell_\infty^m(\ell_1^n)$ space, implying that $G \in \mathcal K'$. 
 \end{proof}
	
\begin{corollary}\label{c:fd-approx-amalg}
	Let $E, F_1, F_2$ be finite dimensional lattices, let $C \geq 1$, and suppose $f_1:E\rightarrow F_1$ and $f_2:E\rightarrow F_2$ are $C$- embeddings.  Then for all $\varepsilon > 0$, there exist a lattice $G \in \mathcal K'$ and $(C+ \varepsilon)$-embeddings $g_1:F_1 \rightarrow G$ and $g_2: F_2\rightarrow G$ such that $g_1\circ f_1 = g_2 \circ f_2$.  
\end{corollary}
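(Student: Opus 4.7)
The plan is to reduce the corollary to Theorem \ref{t:fd-approx-amalg} by first pushing each $F_j$ into $\mathcal{K}'$ with arbitrarily small distortion using Lemma \ref{l:fd-into-1infty-l1}. The whole statement is essentially a bookkeeping exercise once this reduction is in place.

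\textbf{Step 1: Approximate $F_1$ and $F_2$ by lattices in $\mathcal{K}'$.} Fix $\delta > 0$ (to be chosen small relative to $\varepsilon$ and $C$ at the end). By Lemma \ref{l:fd-into-1infty-l1}, since $F_1$ and $F_2$ are finite dimensional, for each $j = 1, 2$ there is a $(1+\delta)$-embedding $h_j : F_j \to \tilde{F}_j$ with $\tilde{F}_j \in \mathcal{K}'$. Setting $\tilde{f}_j := h_j \circ f_j : E \to \tilde{F}_j$ then yields a pair of $C(1+\delta)$-embeddings from the same lattice $E$ into $\mathcal{K}'$-lattices, which is exactly the hypothesis of Theorem \ref{t:fd-approx-amalg}.

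\textbf{Step 2: Amalgamate in $\mathcal{K}'$.} Apply Theorem \ref{t:fd-approx-amalg} to the pair $(\tilde{f}_1, \tilde{f}_2)$ with constant $C(1+\delta)$. This produces a lattice $G \in \mathcal{K}'$ and $C(1+\delta)$-embeddings $\tilde{g}_j : \tilde{F}_j \to G$ with $\tilde{g}_1 \circ \tilde{f}_1 = \tilde{g}_2 \circ \tilde{f}_2$.

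\textbf{Step 3: Compose and tune $\delta$.} Define $g_j := \tilde{g}_j \circ h_j : F_j \to G$. Commutativity is automatic:
\[
g_1 \circ f_1 \;=\; \tilde{g}_1 \circ h_1 \circ f_1 \;=\; \tilde{g}_1 \circ \tilde{f}_1 \;=\; \tilde{g}_2 \circ \tilde{f}_2 \;=\; \tilde{g}_2 \circ h_2 \circ f_2 \;=\; g_2 \circ f_2.
\]
Since composition of a $K_1$-embedding with a $K_2$-embedding is a $K_1 K_2$-embedding, each $g_j$ is a $(1+\delta) \cdot C(1+\delta) = C(1+\delta)^2$-embedding. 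Choosing $\delta$ small enough so that $C(1+\delta)^2 \leq C + \varepsilon$ (possible since the map $\delta \mapsto C(1+\delta)^2$ is continuous at $0$ with value $C$) gives the required $(C+\varepsilon)$-embeddings, and $G \in \mathcal{K}'$ by Step 2.

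There is no serious obstacle; the only point requiring care is tracking the distortion blow-up through the two compositions so that the product $C(1+\delta)^2$ stays below $C+\varepsilon$, which is handled purely by the choice of $\delta$.
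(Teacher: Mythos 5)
Your proposal is correct and follows essentially the same route as the paper: the paper likewise picks $\delta$ with $(1+\delta)^2 C < C+\varepsilon$, uses Lemma \ref{l:fd-into-1infty-l1} to obtain $(1+\delta)$-embeddings $\phi_j : F_j \to \ell_\infty^N(\ell_1^{\dim F_j})$, applies Theorem \ref{t:fd-approx-amalg} to the $C(1+\delta)$-embeddings $\phi_j \circ f_j$, and composes. The only difference is notational.
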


\begin{proof}	
  Pick $\delta$ such that $(1+\delta)^2C < C+\varepsilon$, and pick $N$ such that there are $(1+\delta)$-embeddings $\phi_j:F_j \rightarrow F'_j := \ell_\infty^N(\ell_1^{\dim F_j})$.  Then each $\phi_j \circ f_j : E \rightarrow F'_j$ is a $C(1 + \delta)$-embedding.  By Theorem \ref{t:fd-approx-amalg}, there exists $G \in \mathcal K'$ and $C(1+\delta)$-embeddings $g'_j:F'_j \rightarrow G$ for $j\in  \{1,2\}$ such that $g'_1 \circ \phi_1 \circ f_1 = g'_2 \circ \phi_2 \circ f_2$.  Now let $g_j = g'_j \circ \phi_j$, and observe that each $g_j$ is a $(1+\delta)^2C$-embedding, and $g_1\circ f_1 = g_2\circ f_2$.  Since $(1+\delta)^2C <C +\varepsilon$, we are done.	
\end{proof}

\subsection{The Amalgamation Property for arbitrary Banach lattices}
The above approach works well with finite dimensional lattices, but expanding to finitely generated lattices will lead to some additional complications since finitely generated lattices need not be finite dimensional.  In fact, the separable isometrically universal lattice $\mathcal U = \mathcal C(\Delta, L_1(0,1))$ can be generated by two elements (see Remark 3.1 in \cite{leungLi}). However, we can use this result to express separable lattices with sequences of finite dimensional lattices in order to demonstrate a general amalgamation.\\

Suppose now that $E$ is a Banach lattice.  Let $\alpha$ be a limit ordinal, and let $(E_\gamma)_{\gamma < \alpha}$ be a sequence of increasing sublattices of $E$ such that $\overline{\cup_{\gamma< \alpha} E_\gamma} = E$.  considering $(\gamma)_{\gamma < \alpha}$ as a net, define $\mathcal E \subseteq \prod E_\gamma$ by

$$ \mathcal E = \{ (x_\gamma)_{\alpha < \gamma}: \lim\limits_\gamma x_\alpha = x \in E \}.  $$

Essentially, $\mathcal E$ is a lattice of $\alpha$-length sequences converging to elements in $E$, with norm $\| (x_\alpha) \|_\mathcal{E} = \sup_\alpha \|x_\alpha \|$.

\begin{lemma}\label{l:lat-to-sequence}
	Let $E$ and  $\mathcal E$ be as above, and let $\mathcal E_0$ be the ideal in $\mathcal E$ of null sequences.  Then $E$ is isometric to $\mathcal E/ \mathcal E_0$.
\end{lemma}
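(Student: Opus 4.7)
The plan is to define the evaluation map
\[
T : \mathcal{E} \to E, \qquad T\bigl((x_\gamma)_{\gamma < \alpha}\bigr) = \lim_\gamma x_\gamma,
\]
which is well-defined by the very definition of $\mathcal{E}$, and to check that the induced map $\widetilde{T}$ on $\mathcal{E}/\mathcal{E}_0$ is the desired lattice isometry onto $E$. First I would verify that $T$ is a contractive lattice homomorphism: linearity and preservation of $\vee$, $\wedge$ follow from joint continuity of the vector space and lattice operations in $E$, while
\[
\|T((x_\gamma))\| = \lim_\gamma \|x_\gamma\| \leq \sup_\gamma \|x_\gamma\| = \|(x_\gamma)\|_{\mathcal{E}}
\]
gives contractivity. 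For surjectivity I would use density: for $x \in E$, the function $\gamma \mapsto \dist(x, E_\gamma)$ is non-increasing in $\gamma$ (since $(E_\gamma)$ is increasing) and has infimum $0$ (since $\cup_\gamma E_\gamma$ is dense), so choosing $x_\gamma \in E_\gamma$ with $\|x_\gamma - x\| \leq 2\dist(x, E_\gamma)$ when this distance is positive, and $x_\gamma = x$ otherwise, gives $(x_\gamma) \in \mathcal{E}$ with $T((x_\gamma)) = x$. The identity $\ker T = \mathcal{E}_0$ is immediate from the definition of $\mathcal{E}_0$.

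Factoring through the quotient then yields a bijective lattice homomorphism $\widetilde{T} : \mathcal{E}/\mathcal{E}_0 \to E$, and contractivity of $T$ passes to $\widetilde{T}$, giving $\|\widetilde{T}[(x_\gamma)]\| \leq \|[(x_\gamma)]\|_{\mathcal{E}/\mathcal{E}_0}$. The remaining content is the reverse inequality, which I would establish directly by a cutoff argument. Given $(x_\gamma) \in \mathcal{E}$ with limit $x$ and any $\varepsilon > 0$, fix $\gamma_0 < \alpha$ with $\|x_\gamma\| \leq \|x\| + \varepsilon$ for every $\gamma \geq \gamma_0$ (possible since $\|x_\gamma\| \to \|x\|$), and set $y_\gamma = -x_\gamma$ for $\gamma < \gamma_0$ and $y_\gamma = 0$ otherwise. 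Then each $y_\gamma \in E_\gamma$, $(y_\gamma)$ is eventually zero so lies in $\mathcal{E}_0$, and
\[
\|(x_\gamma) + (y_\gamma)\|_{\mathcal{E}} = \sup_{\gamma \geq \gamma_0} \|x_\gamma\| \leq \|x\| + \varepsilon.
\]
Letting $\varepsilon \to 0$ yields $\|[(x_\gamma)]\|_{\mathcal{E}/\mathcal{E}_0} \leq \|x\| = \|\widetilde{T}[(x_\gamma)]\|$, completing the isometry.

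The main technical point is the cutoff construction, which crucially relies on the fact that each $-x_\gamma$ already lies in the prescribed sublattice $E_\gamma$; without this the quotient norm could a priori be strictly larger than the limit norm. Everything else reduces to standard bookkeeping about continuity of lattice operations, density, and the quotient norm formula.
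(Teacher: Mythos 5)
Your proposal is correct and is essentially the paper's own argument run in the opposite direction: the paper constructs the isometry $g:E\to\mathcal E/\mathcal E_0$ directly, while you construct its inverse by factoring the evaluation map $T:\mathcal E\to E$ through the quotient, but the substantive steps (continuity of the lattice operations for the homomorphism property, $\|x_\gamma\|\to\|x\|$ for the lower bound, and the truncation of the sequence below $\gamma_0$ to bound the quotient norm by $\|x\|+\varepsilon$) coincide exactly with those in the paper. No gaps.
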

\begin{proof}
	Let $ x\in E$, and let $(x_\gamma)_\gamma \rightarrow x$, and let $[(x_\gamma)]$ denote the equivalence class induced by $\mathcal E_0$.  \\ We will now show that the map $ g:E \rightarrow \mathcal E/\mathcal E_0$ with  $g(x) \mapsto [(x_\gamma)_\gamma] $ is an isometry.\\
	
	First, it is well defined:  if $(x_\gamma)_\gamma $ and $(y_\gamma)_\gamma$  converge to $x$, then $(y_\gamma -x_\gamma)_\gamma  \in \mathcal E_0$, so $[(x_\gamma)_\gamma] = [(y_\gamma)_\gamma]$.   By continuity of scalar multiplication and addition, $g(x)$ is linear.  It also preserves norms.  Note that $\| g(x) \|_{\mathcal E/ \mathcal E_0} = \inf \{\| (x_\gamma)_\gamma \|: (x_\gamma)_\gamma \rightarrow x \}$, so $\|g(x) \|_{\mathcal E/\mathcal E_0} \geq \|x\|$, since $\|x_\gamma \| \rightarrow \|x\|$.  For $\epsilon > 0$ and $(x_\gamma) \rightarrow x$, there exists some $\beta < \alpha$ such that for all $\gamma > \beta, \|x_\gamma - x \| < \epsilon$  consider then the $\alpha$-sequence $(x'_\gamma)$ with $x'_\gamma = 0$ for all $\gamma < \beta$ and $x'_\gamma = x_\gamma$ otherwise.  Then $g(x) =[ (x'_\gamma)_\gamma ]$, and so $\| g(x) \|_{\mathcal E/\mathcal E_0} \leq \|x\| +\varepsilon$. In addition, the map is clearly surjective, since any $[(y_\gamma)] = g(x)$ where $(y_\gamma) \rightarrow x$. Thus $g$ is a linear isometry.\\
	
	Finally, $g$ preserves lattice operations. First of all, $g$ is positive.  If $x \geq 0$, and $(x_\gamma)_\gamma \rightarrow x$, then $ 0 \leq (x_\gamma \vee 0)_\gamma \rightarrow x\vee 0 = x$.  Since $\mathcal E_0$ is a lattice ideal, $g(x) =[ (x_\gamma \vee 0)_\gamma] = [(x_\gamma)_\gamma] \vee [0] \geq 0$.  In addition, $g(x)$ preserves disjointness:  if $x \wedge y = 0$, then if $x_\gamma \rightarrow x$ and $y_\gamma \rightarrow y$, then $x_\gamma \wedge y_\gamma \rightarrow x\wedge y = 0$. Then $$[(x_\gamma)]\wedge [(y_\gamma)]  = [(x_\gamma \wedge y_\gamma)]  = [0],$$ so $g$ is a lattice homomorphism. Therefore $g$ is a lattice isometry. \end{proof}

Given a separable lattice $E$, by  \cite[Proposition 2.2]{leungLi}, there exists a finitely branchable lattice $E'$ such that $E \subseteq E' \subseteq E^{**}$. Let $(x_\sigma)_{T_{E'}}$ be the corresponding branching tree.    Let $k_n \uparrow \infty$ where $ k_n \in \N$ be a strictly increasing sequence, let ${E'}_{k_n} = \text{span}({x_\sigma: |\sigma| = k_n})$, and let $\mathcal E' \subseteq \prod_n E'_{k_n} $ be the lattice defined by 
\[ \mathcal E' = \{ (x_i)_i :  x_i \rightarrow x \in E'  \},  \]

with lattice norm $\|x\|_{\mathcal E'} = \sup_n \|x_n\|$. Finally, let $\mathcal E'_0 = \{ x \in \mathcal E': x_n \rightarrow 0 \}$.  By Lemma \ref{l:lat-to-sequence},  $\mathcal E' / \mathcal E_0' $ is lattice isometric to $E'$ itself.  Furthermore, any finite dimensional lattice $F \in E$ can be approximated by a sublattice of some $E_n$ for some $n$:  

\begin{lemma}\label{l:finite-lattice-approximation}
	Let $E = \overline{ \cup_n E_n}$ where $(E_n)$ is an increasing sequence of lattices.  Suppose $F \subseteq E$ is a finite dimensional sublattice.  Then for all $\varepsilon > 0$ there exist $n\in \N$ and a $(1+\varepsilon)$-isometry $g:F \rightarrow E_n$ such that $\| g - Id|_F \| <\varepsilon$.
\end{lemma}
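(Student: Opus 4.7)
The plan is to exploit the atomic structure of finite dimensional Banach lattices. Let $e_1, \ldots, e_m$ be the positive atoms of $F$, so they are pairwise disjoint and $F = \spn(e_1, \ldots, e_m)$, and fix a parameter $\delta > 0$ to be tuned at the end. By density of $\bigcup_n E_n$ in $E$, and because the sequence is increasing, there exist $n \in \N$ and elements $y_1, \ldots, y_m \in E_n$ with $\|y_i - e_i\| < \delta$ for every $i$. Replacing $y_i$ by $|y_i| \in E_n$ does not worsen the approximation, since $\||y_i| - e_i\| = \||y_i| - |e_i|\| \leq \|y_i - e_i\|$, so I may assume each $y_i$ is positive.

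The key step is a disjointification inside the sublattice $E_n$. Define
\[
z_i := \Bigl(y_i - \sum_{j \neq i} y_j\Bigr)^+ \in E_n.
\]
These elements are pairwise disjoint: for $i \neq k$, $z_i \leq (y_i - y_k)^+$ and $z_k \leq (y_k - y_i)^+$, and positive and negative parts of any element are disjoint. On the other hand, the $e_i$ are themselves disjoint positives, so $e_i \wedge \sum_{j \neq i} e_j = 0$ and hence $\bigl(e_i - \sum_{j \neq i} e_j\bigr)^+ = e_i$. Applying the lattice Lipschitz bound $|a^+ - b^+| \leq |a - b|$ then yields
\[
\|z_i - e_i\| \leq \Bigl\|(y_i - e_i) - \sum_{j \neq i} (y_j - e_j)\Bigr\| \leq m\delta.
\]

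Now define $g\colon F \to E_n$ by $g(e_i) = z_i$ and linear extension. Since $g$ sends the atomic basis of $F$ to pairwise disjoint positive elements, it is automatically a lattice homomorphism. For $x = \sum c_i e_i \in F$, disjointness of the atoms forces $|x| = \sum |c_i| e_i$, hence $|c_i| e_i \leq |x|$ and $|c_i| \leq \|x\|/\|e_i\|$. Setting $\mu := \min_i \|e_i\| > 0$ (positive and attained, since $F$ is finite dimensional), one obtains
\[
\|g(x) - x\| = \Bigl\|\sum_i c_i(z_i - e_i)\Bigr\| \leq m\delta \sum_i |c_i| \leq \frac{m^2 \delta}{\mu}\,\|x\|.
\]
Choosing $\delta$ with $m^2\delta/\mu < \varepsilon$ delivers $\|g - \mathrm{Id}|_F\| < \varepsilon$, and the standard triangle-inequality argument $(1-\varepsilon)\|x\| \leq \|g(x)\| \leq (1+\varepsilon)\|x\|$ gives the $(1+\varepsilon')$-embedding property with $\varepsilon'$ arbitrarily small as $\delta \to 0$.

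The main obstacle is just the disjointification step together with the quantitative bookkeeping linking $\delta$ to the final distortion. The constant $m^2/\mu$ depends on $F$, but it is finite precisely because $F$ is finite dimensional and hence has a finite atomic basis of positive-norm atoms; this is exactly where the finite dimensionality of $F$ is used. The remaining verifications---that the truncation formula actually lies in $E_n$, which it does because $E_n$ is closed under the lattice operations, and that the map sending disjoint positive atoms to disjoint positives is automatically a lattice homomorphism---are routine.
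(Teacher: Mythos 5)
Your proof is correct and follows essentially the same route as the paper's: approximate the atoms of $F$ by positive elements of some $E_n$, disjointify them inside $E_n$ using lattice operations (the paper uses $y_i - y_i \wedge \bigvee_{j\neq i} y_j$, which equals $(y_i - \bigvee_{j\neq i}y_j)^+$ and differs only cosmetically from your $(y_i - \sum_{j\neq i}y_j)^+$), and extend linearly to a disjointness-preserving positive map. Your version is in fact slightly more careful than the paper's, since the explicit constant $m^2/\mu$ with $\mu = \min_i\|e_i\|$ handles non-normalized atoms that the paper's estimate $\sum_i |a_i|\,\|e_i - g(e_i)\| < \delta$ implicitly glosses over.
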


\begin{proof}
	
	Let $m= \dim F$, and let $h(x_1,...,x_m ) = x_1 - x_1\wedge (\bigvee_{i \geq 2} x_i)$. Let $(e_i)_i$ be the atoms of $F$, and let $\overline{e_i} = (e_1,..., e_{i-1}, e_{i+1},..., e_m)$.  Now $h$ is continuous, and for any sequence $(x_1,...,x_m)$ of positive elements, the elements $h(x_1, \overline{x_1}),...,h(x_m, \overline{x_m})$ are mutually disjoint and positive. Thus since $\cup E_n$ is dense in $X$, for some $n$ there exist corresponding positive $(f_1,...,f_m) \subseteq E_{n}$ such that $\| h(e_i, \overline{e_i}) - h(f_i, \overline{f_i}) \| < \delta/m$.  Now since the $e_i$'s are mutually disjoint, $h(e_i, \overline{e_i}) = e_i$.  Let $g:F \rightarrow E_n$ be the lattice homomorphism generated by $g(e_i) = h(f_i, \overline{f_i})$.  Then for any $\sum a_i e_i \in \mathbf S(F)$, we have $$ \| \sum e_i - \sum a_i g(e_i) \| \leq \sum_{i}^m |a_i|\| e_i - g(e_i) \| < \delta.$$  It follows that $ 1-\delta < \| g(\sum_i^m a_i e_i)  \| < 1+\delta$, so $g$ is a $\frac{1+\delta}{1-\delta}$-isometry. If we let $\frac{1+\delta}{1-\delta} < 1+\varepsilon$, we have both that $g$ is a $(1+\varepsilon)$-isometry and $\| Id|_F - g\| < \varepsilon$.
	
\end{proof}
We now state the following lemma:
\begin{lemma}\label{l:quotient-lift}
	Let $E$ and $A$ be a finitely branchable Banach lattices, and suppose $\phi:E \rightarrow A$ is an embedding. Let $(x_\sigma)_{\sigma \in T_E}$ and $(y_\sigma)_{\sigma \in T_A}$ be linearly dense spanning trees for $E$ and $A$, respectively. Then for all $\varepsilon > 0$, there exist a strictly increasing sequence $(k_n)_n \subseteq N$ and $(1+\varepsilon)$-embedding $\phi':\mathcal E \rightarrow \mathcal A$ generated by a sequence of maps $\phi_n: E_n \rightarrow A_{k_n}$ such that: \begin{enumerate}
		\item The following diagram commutes:
		
		\begin{center}
			\begin{tikzcd}
			\mathcal E \arrow{r}{\phi'}  \arrow{d}{q_E} & \mathcal A \arrow{d}{q_A} \\
			E \arrow{r}{\phi} & A
			\end{tikzcd}
			%\caption{}
		\end{center}
		
		\item For each $n$,  $\phi_n:E_n\rightarrow A_{k_n}$ is a $(1+\varepsilon/2^n)$-embedding. 
	\end{enumerate}
\end{lemma}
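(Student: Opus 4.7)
The plan is to construct $\phi'$ level-by-level from the branching tree $(x_\sigma)_{\sigma \in T_E}$, using Lemma~\ref{l:finite-lattice-approximation} to approximate $\phi$ on each finite-dimensional sublattice $E_n = \spn\{x_\sigma : |\sigma| = n\}$ by a lattice homomorphism landing in a sufficiently high level $A_{k_n}$ of $A$'s branching tree. Since $\phi$ is a lattice embedding and $E_n$ is finite-dimensional, $\phi(E_n)$ is a finite-dimensional sublattice of $A = \overline{\bigcup_k A_k}$, so Lemma~\ref{l:finite-lattice-approximation} applies to it.

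First I set $\varepsilon_n = \varepsilon/2^n$. For each $n$, I apply Lemma~\ref{l:finite-lattice-approximation} to $\phi(E_n) \subseteq A$ to obtain an index $k_n$ and a lattice homomorphism $g_n : \phi(E_n) \to A_{k_n}$ that is a $(1+\varepsilon_n)$-isometry satisfying $\|g_n - \mathrm{Id}|_{\phi(E_n)}\| < \varepsilon_n$. By passing to a larger index if necessary, the sequence $(k_n)$ can be chosen strictly increasing (Lemma~\ref{l:finite-lattice-approximation} still holds at any higher level, since $A_k \subseteq A_{k'}$ for $k \leq k'$). I then define $\phi_n := g_n \circ \phi|_{E_n} : E_n \to A_{k_n}$; as a composition of an isometric embedding with a $(1+\varepsilon_n)$-isometric lattice homomorphism, $\phi_n$ is a $(1+\varepsilon_n)$-embedding, giving condition (2).

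Next I define $\phi' : \mathcal{E} \to \mathcal{A}$ coordinatewise by $\phi'((x_n)_n) := (\phi_n(x_n))_n$, which is automatically a lattice homomorphism. To see that it lands in $\mathcal{A}$, fix $(x_n) \in \mathcal{E}$ with $x_n \to x \in E$; then
\[
\|\phi_n(x_n) - \phi(x)\| \leq \|g_n(\phi(x_n)) - \phi(x_n)\| + \|\phi(x_n) - \phi(x)\| \leq \varepsilon_n \|\phi(x_n)\| + \|\phi(x_n) - \phi(x)\|.
\]
The first summand tends to $0$ because $\varepsilon_n \to 0$ and $\|\phi(x_n)\| = \|x_n\|$ is bounded, while the second tends to $0$ by continuity of $\phi$. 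Hence $\phi_n(x_n) \to \phi(x)$, so $\phi'((x_n)) \in \mathcal{A}$ and $q_A(\phi'((x_n))) = \phi(x) = \phi(q_E((x_n)))$, verifying that the square commutes. Taking suprema in the pointwise $(1+\varepsilon_n)$-embedding inequalities yields
\[
(1+\varepsilon/2)^{-1}\|(x_n)\|_{\mathcal{E}} \leq \|\phi'((x_n))\|_{\mathcal{A}} \leq (1+\varepsilon/2)\|(x_n)\|_{\mathcal{E}},
\]
which is stronger than the required $(1+\varepsilon)$-embedding bound.

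The only delicate point is the well-definedness of $\phi'$: since the approximations $g_n$ act independently at each level, one must ensure the coordinate-wise sequences genuinely converge in $A$, and that the limit is exactly $\phi(x)$ (so that the diagram commutes rather than commuting only up to a small perturbation). This reduces entirely to the triangle-inequality estimate above, which is routine once $\varepsilon_n = \varepsilon/2^n$ and the uniform bound $\|\phi(x_n)\| = \|x_n\|$ are exploited.
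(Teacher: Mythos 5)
Your proposal is correct and follows essentially the same route as the paper: apply Lemma~\ref{l:finite-lattice-approximation} level-by-level to the finite-dimensional sublattices $\phi(E_n)$ to produce the maps $\phi_n$ with distortion $1+\varepsilon/2^n$ and distance at most $\varepsilon/2^n$ from $\phi|_{E_n}$, then assemble them coordinatewise and verify membership in $\mathcal A$ and commutativity via the same triangle-inequality estimate. Your write-up is in fact slightly more explicit than the paper's about why $\phi'((x_n))$ converges to $\phi(x)$ exactly (rather than only approximately), which is the point that makes the square commute.
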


\begin{proof}
	Let $\mathcal E \subseteq \prod_n E_n$.  We will construct $k_n$ as follows. Begin with $x_\emptyset \in E_+$, and suppose that $\|x_\emptyset\| = 1$.  Pick $k_0 \in \N$ and $z_\emptyset \in \text{span} ( \{ y_\sigma: |\sigma| = k_0 \} ) $ with $z_\emptyset\geq 0$ such that $\| z - \phi(x_\emptyset) \| < \varepsilon$.  We then let $\phi'_0(x_\emptyset) = z_\emptyset$.  For $n>0$, since $E_n$ is finite dimensional and embeds into $A$, by Lemma \ref{l:finite-lattice-approximation}, pick $k_n$ in such a way that such a way that there is a $\phi_n:E_n \rightarrow A_{k_n}$ with distortion level at most $(1+ \varepsilon/2^n)$. \\
	
	Let $\phi' = (\phi_n)_n$.  Note that $\phi'$ sends atoms to disjoint elements and is a positive linear map.  To show that property 1 is also fulfilled, we must first show that $\phi$ takes elements in $\mathcal E$ to elements in $\mathcal A$.  Let $x \in \mathcal E$, with $(x_i) \rightarrow x' \in E$. Now $\phi(x_i) \in A$, and by continuity $\phi(x_i) \rightarrow \phi(x') \in A$ as well.  Yet $\| \phi(x_i) - \phi_i(x_i) \| \leq \frac{\varepsilon}{2^i}$, so $\phi_i(x_i) \rightarrow \phi(x') \in \mathcal A$.   In addition,  if $x_i \rightarrow x \in E$, then $q_A \circ \phi' ((x_i)_i) = \phi(x)$, which gives us commutativity, thus fulfilling property 1.    
\end{proof}
We are now ready to prove the following:
\begin{theorem}\label{t:lattice-amalg}
	Let $E,A_1, A_2$ be separable Banach lattices, and let $f_1: E\rightarrow A_1$ and $f_2:E\rightarrow A_2$ be embeddings.  Then there exists a separable Banach lattice $G$ and embeddings $g_1: A_1 \rightarrow G$ and $g_2: A_2\rightarrow G$ such that $g_1 \circ f_1 = g_2 \circ f_2$.
		%\item If $E,A_1$ and $A_2$ are finitely generated, we can make $G$ finitely generated as well.
\end{theorem}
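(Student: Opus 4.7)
Plan: The strategy is to reduce to the finite-dimensional amalgamation of Corollary \ref{c:fd-approx-amalg} via an inductive limit construction, using the finitely branchable machinery developed earlier in the section. First I reduce to the case where $E, A_1, A_2$ are all finitely branchable. By \cite[Proposition 2.2]{leungLi}, each separable lattice embeds into a finitely branchable super-lattice, and any embedding of this super-lattice into a larger lattice restricts to an embedding of the original; so replacing $A_j$ by its finitely branchable super-lattice and composing $f_j$ with the inclusion reduces us to the case $A_1, A_2$ finitely branchable. For $E$ the reduction is subtler since $f_j$ does not automatically extend to a finitely branchable enlargement $\tilde E \supseteq E$; one handles this by taking $\tilde E$ and $\tilde A_j$ inside the biduals in a coordinated way so that the composed inclusions $E \subseteq \tilde E \subseteq \tilde A_j$ produce the required extensions $\tilde f_j$.

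Having reduced to $E, A_1, A_2$ finitely branchable, express each as the closure of an increasing chain of finite dimensional sublattices coming from its spanning tree: $E = \overline{\bigcup_n E_n}$ and $A_j = \overline{\bigcup_n A_{j,n}}$. Applying Lemma \ref{l:quotient-lift} to each $f_j$ and passing to a common subsequence, I obtain $(1+\varepsilon_n)$-embeddings $f_{j,n}\colon E_n \to A_{j,n}$ approximating $f_j|_{E_n}$, with $\varepsilon_n \to 0$ summably. Now apply Corollary \ref{c:fd-approx-amalg} at each stage $n$ to produce $G_n \in \mathcal{K}'$ and $(1+\delta_n)$-embeddings $h_{j,n}\colon A_{j,n} \to G_n$ satisfying $h_{1,n} \circ f_{1,n} = h_{2,n} \circ f_{2,n}$, with $\delta_n$ summable as well. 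To weld the $G_n$'s into a directed system, I iteratively apply Corollary \ref{c:fd-approx-amalg} again: amalgamate $G_n$ with $A_{1,n+1}$ over $A_{1,n}$ (via the embedding $h_{1,n}$), then the result with $A_{2,n+1}$ over $A_{2,n}$, obtaining $G_{n+1}$ with a compatible inclusion $G_n \hookrightarrow G_{n+1}$ and updated embeddings $h_{j,n+1}$. Let $G$ be the completion of $\varinjlim_n G_n$; the $h_{j,n}$'s assemble (by summability of the $\delta_n$'s and the density of $\bigcup_n A_{j,n}$ in $A_j$) into isometric embeddings $g_j\colon A_j \to G$, and $g_1 \circ f_1 = g_2 \circ f_2$ follows by taking limits of the finite-stage commutativity combined with $f_{j,n} \to f_j|_{E_n}$.

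The main obstacle is the careful bookkeeping of the cumulative distortions through the iterated amalgamations: the approximation errors $\varepsilon_n$ (from Lemma \ref{l:quotient-lift}) and amalgamation errors $\delta_n$ (from Corollary \ref{c:fd-approx-amalg}) both contribute multiplicatively to the total distortion of the limit embeddings, so they must be chosen to shrink geometrically in $n$ so that the infinite products converge to $1$ and the $g_j$ are genuine isometric embeddings rather than merely $(1+\epsilon)$-embeddings. A secondary subtlety is the coordinated reduction to finite branchability for all three lattices simultaneously, so that the given maps $f_j$ extend compatibly to the finitely branchable enlargements; this requires choosing the spanning trees inside the biduals so that the tree of $\tilde E$ sits inside those of both $\tilde A_1$ and $\tilde A_2$.
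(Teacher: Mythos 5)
Your reduction to the finitely branchable setting via biduals and \cite[Proposition 2.2]{leungLi}, and your use of Lemma \ref{l:quotient-lift} followed by Corollary \ref{c:fd-approx-amalg} at each finite level, match the paper's proof. The gap is in the welding step. You propose to turn the $G_n$'s into a directed system $G_n \hookrightarrow G_{n+1}$ by further amalgamations and take $G = \varinjlim_n G_n$. This cannot deliver the conclusion, for two reasons. First, in an inductive limit with isometric connecting maps the image of a fixed element is frozen at the first stage where it appears: since $f_{j,n}\colon E_n \to A_{j,n}$ is only a $(1+\varepsilon_n)$-embedding, Corollary \ref{c:fd-approx-amalg} can only produce $h_{j,n}$ with distortion at least $1+\varepsilon_n$, and once $h_{j,n}(x)$ is fixed and pushed forward isometrically, its norm never improves at later stages; so the limit maps $g_j$ are not isometric on $\bigcup_n A_{j,n}$, and no choice of summable $\varepsilon_n,\delta_n$ repairs this. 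Second, exact commutativity $h_{1,n+1}\circ f_{1,n+1}=h_{2,n+1}\circ f_{2,n+1}$ on all of $E_{n+1}$, \emph{simultaneously} with compatibility $h_{j,n+1}|_{A_{j,n}}=\iota_n\circ h_{j,n}$, is an amalgamation of a diagram strictly more complicated than the span handled by Theorem \ref{t:fd-approx-amalg}; the amalgam constructed there is not a pushout, so iterating it gives no universal property forcing the extra face of the cube to commute, and any defect at a finite stage persists in the inductive limit.

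The paper's construction avoids both problems by \emph{not} connecting the $G_n$'s at all: it forms the sublattice of $\prod_n G_n$ (with the sup norm) generated by the images, and quotients by the ideal of null sequences. Because each level is amalgamated independently, exact commutativity $\psi^1_n\circ\phi^1_n=\psi^2_n\circ\phi^2_n$ holds coordinatewise and hence in the quotient; and because the $n$-th coordinate of a representative of a fixed $x$ may be a different, ever-better approximation as $n\to\infty$, the quotient by null sequences (which forgets any initial segment) turns the $(1+\varepsilon/2^{n-1})$-embeddings into genuine isometric embeddings. If you want to keep your inductive-limit picture you would need honest pushouts (as in \cite{AvilesTradecete2020}); with only the near-amalgamation tools available in this paper, the product-modulo-null-sequences device is essential, not cosmetic.
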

\begin{proof}
	Since each $f_i:E\rightarrow A_i$ is a lattice embedding for $i=1,2$, by \cite[Theorem 1.4.19]{mey}, each $f_i^{**}: E^{**} \rightarrow A_i^{**}$ is a lattice embedding.  By Proposition 2.2 in \cite{leungLi}, there exists a separable finitely branchable lattice $E \subset E' \subseteq E^{**}$ with a finitely branching tree $(x_\sigma)_{\sigma \in T_{E'}}$.  Similarly, we can take the Banach lattice generated by $f_i^{**}(E')$ and $A_i$, and inject it into a finitely branchable $A'_i$ with a corresponding finite branching tree $(y_\sigma)_{\sigma \in T_{A'_i}}$.   Thus we can redefine $f_1$ and $f_2$ to be extended to $E'$.  \\
	
	Let $\varepsilon > 0$, and using Lemma \ref{l:quotient-lift}, pick appropriate increasing sequences of natural numbers $k^1_n \uparrow \infty$ and $k^2_n \uparrow \infty$ generating $\mathcal A'_1$ and $\mathcal A_2'$ with accompanying $(1+\varepsilon)$-isometries $f'_1$ and $f'_2$ such that the following diagram commutes: 

	\begin{center}
		
		\begin{tikzcd}
		\qquad & \mathcal E' \arrow{rd}{f'_1} \arrow{d}{q_{E'}} \arrow{ld}{f'_2} & \qquad \\
		\mathcal A_2' \arrow{d}[left]{q_{A_2'}} & E' \arrow{ld}{f_2} \arrow{rd}{f_1} &  \mathcal A_1' \arrow{d}{q_{A_1'}}  \\
		A_2' & E \arrow{u}{Id}  \arrow{rd}{f_1} \arrow{ld}{f_2} & A_1' \\
		A_2 \arrow{u}{Id} &  & A_1 \arrow{u}[right]{Id}
		\end{tikzcd}
	\end{center}

	%Given the fact that $A'_{j\ m} \subseteq A'_{j \ n}$ whenever $m < n$, any of the maps $\phi_n^j$ can have any $A'_{j \ m}$ as a codomain so long as $m > k^j_n$.  Thus for ease of notation, inductively construct single increasing sequence $(k_n)$, with $k_n > k^1_n, k^2_n$ satisfies the conditions for both $\mathcal A_1$ and $\mathcal A_2$.   \\
	
	By the assumptions on Lemma \ref{l:quotient-lift}, $f'_j = (\phi_n^j)_n$, where $\phi_n^j : E'_n \rightarrow A'_{j \ k^j_n}$ is a $(1+\varepsilon/2^n)$-isometry.  Use Corollary \ref{c:fd-approx-amalg} to get $G_n$ and $(1+\varepsilon/{2^{n-1}})$-embeddings $\psi^1_n$ and $\psi^2_n$ such that $\psi^1_n \circ \phi^1_n = \psi^2_n \circ \phi^2_n$,  and let $g'_1 = (\psi^1_n)_n$ and $g'_2 = (\psi^2_n)_n$. Let $\mathcal G' \subseteq \prod_n G_n$ be the sublattice generated by $g'_1(\mathcal A'_1)$ and $g'_2(\mathcal A'_2)$, and equip $\mathcal G'$ with the sup-norm; that is, if $x\in \mathcal G' $, let $\| x\|_{\mathcal G'} = \sup \| x_n \|_{G_n}$. Now each $g'_j$ is a $(1+2\varepsilon)$-embedding.  Let $\mathcal G'_0$ be the ideal consisting of elements $x \in \mathcal G'$ such that $\|x_n\|_{G_n} \rightarrow 0$, and let $G = \mathcal G'/\mathcal G'_0$. Note that for each $j\in \{1,2\} $, we have $g'_j (\mathcal A'_{j \ 0}) \subseteq \mathcal G'_0$. Thus $g'_j$ induces well defined maps $g_j: A'_j \rightarrow G$, with $g_j = q_{G} \circ g'_j \circ q_{A'_j}^{-1}$.  We therefore have the following commuting diagram: 
	
	\begin{center}
		\begin{tikzcd}
		\qquad & \mathcal E' \arrow{rrd}[description]{f'_1} \arrow{d}[description]{q_{E'}} \arrow{ldd}[description]{f'_2} & \qquad \\
		& E' \arrow{ldd}[description]{f_2} \arrow{rrd}[description]{f_1} & &  \mathcal A_1' \arrow{d}{q_{A_1'}}  \arrow{ldd}[description]{g'_1} \\
		\mathcal A_2' \arrow{d}[left]{q_{A_2'}} \arrow{rrd}[description]{g'_2}&  & & A_1'  \arrow{ldd}[description]{g_1}\\		
		A_2' \arrow{rrd}[description]{g_2} & &\mathcal G' \arrow{d}[description]{q_{ G}}&  & \\
		& & G
		\end{tikzcd}
	\end{center}
	
	It remains to show that each $g_j$ is in fact an embedding.  To this end, we note that if $z \in G$, then $\| z \| = \inf  \{ \|y \| : q_G (y) = z \}$.  Let $x \in A'_1.$ Pick $y\in \mathcal A'_1$ with $\| y \| < 1 + \delta$ such that $y_i \rightarrow x$.  This can be done by picking $n$ such that for all $n \geq N$ $\|x - y_n \| < \delta$, $\varepsilon/2^{n-1} < \delta$, and furthermore, we can assume that for all $n < N$, $y_n = 0$.  It then follows that $ \frac{1}{(1+\delta)^2} \leq \| g'_1(y) \|_{\mathcal G'} \leq (1+\delta)^2$, so $\| q_G  g'_1 (y) \| \leq (1+\delta)^2$, Thus $ \|g_1(x) \|_{G'} \leq (1+\delta)^2$.  In addition, for any $z \in \mathcal G'_0$, since for all $\delta' > 0$ we have $\|z_n \|_{G_n} \leq \delta'$ for all large enough $n$, it follows that $\|z - g'_1(y) \| > \frac{1}{(1+\delta)^2} - \delta'$.  Thus $\|g_1(x) \|_{G} \geq \frac{1}{(1+\delta)^2}$.  $\delta$ can be chosen to be arbitrarily small, so $\|g_1(x) \|_{G}  = 1$. \\
	
	Finally, we show that $g_j$ preserves disjointness and is a positive map.  Let $x\in A_{j+}'$, and chose a sequence $y = (y_i)_i \in \mathcal A'_{j+}$ with $y_i \rightarrow x$. Then $g'_j (y) \geq 0$, so $q_G g'_j (y) = g_j(x) \geq 0$.  To show preservation of disjointness, let $x, x' \geq 0$ be disjoint elements, and let $y = (y_i)_i \in q_{\mathcal A'_j}^{-1}(x)$ and similarly let $y'= (y'_i)_i \in q_{\mathcal A'_j}^{-1}(x')$.  Then $y \wedge y' \in \mathcal A'_{j \ 0}$; since $(y_i)_i \rightarrow x$ and $(y'_i)_i \rightarrow x'$, we have $$y \wedge y' = (y_i \wedge y'_i)_i \rightarrow x\wedge x' = 0,$$ so $g'_j(y) \wedge g'_j(y') = g'_j(y \wedge y') \in \mathcal G'_0$, which means that $g_j(x)\wedge g_j(x') = q_Gg'_j(y) \wedge  q_G g'_j(y') = q_G g'_j(y \wedge  y') = 0$. Thus $g_j$ is an embedding. \\
	
	To show separability, we simply restrict $g_1 $ and $g_2$ to $A_1$ and $A_2$, and replace $G$ with the lattice generated by $g_1(A_1)\bigcup g_2(A_2)$.  Thus if $A_1$ and $A_2$ are both separable, then so is $G$.	\end{proof}

\begin{remark}
	We can also ensure that $G$ is finitely generated, since we can embed $G$ into $\mathcal U$ if necessary. Thus $\mathcal K$ has the AP.
\end{remark}

We can expand Theorem \ref{t:lattice-amalg} for arbitrary lattices with a similar proof.

\begin{theorem}\label{t:arbitrary-lattice-amalg}
	Let $E, F_1, F_2$ be Banach lattices, and let $f_i:E \rightarrow F_i$, with $i\in \{ 1,2\}$ be embeddings.  Then there exists a lattice $G$ and isometric embeddings $g_i:F_i \rightarrow G$ such that $g_1 \circ f_1 = g_2 \circ f_2$.  Furthermore, if $F_i$ has density character no more than $\kappa$, we can ensure that $G$ does as well.
\end{theorem}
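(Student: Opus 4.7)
The approach is to reproduce the architecture of Theorem~\ref{t:lattice-amalg} with countable sequences replaced by nets indexed over a directed set of cardinality at most $\kappa$, and separable finitely branchable lattices replaced by a $\kappa$-sized analogue.

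First, I would generalize \cite[Prop.~2.2]{leungLi}: for any Banach lattice $F$ of density character at most $\kappa$, construct an overlattice $F' \subseteq F^{**}$ of density character at most $\kappa$ with $F' = \overline{\bigcup_{\alpha \in I} F'_\alpha}$, where $I$ is a directed set with $|I| \leq \kappa$ and each $F'_\alpha$ is finite-dimensional. This is obtained by transfinite upward-saturation over a $\kappa$-sized dense subset of $F$, iteratively closing under lattice operations in the bidual and passing to ideal hulls. Then I would generalize Lemmas~\ref{l:lat-to-sequence}, \ref{l:finite-lattice-approximation} and \ref{l:quotient-lift} from sequences to nets: let $\mathcal{F}' \subseteq \prod_{\alpha \in I} F'_\alpha$ be the sup-normed sublattice of bounded nets converging along $I$ to some element of $F'$, let $\mathcal{F}'_0$ be the null nets, and check that $\mathcal{F}'/\mathcal{F}'_0 \cong F'$ and that any embedding $E' \to F'$ lifts to compatible $(1+\varepsilon_\alpha)$-embeddings between the finite-dimensional pieces, with $\varepsilon_\alpha \to 0$ along a cofinal copy of $\N$ in $I$.

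With these tools in hand, the main construction transcribes the proof of Theorem~\ref{t:lattice-amalg}: extend $f_1, f_2$ to $f_1^{**}|_{E'}, f_2^{**}|_{E'}$, apply Corollary~\ref{c:fd-approx-amalg} levelwise to amalgamate $F'_{1,\alpha}$ and $F'_{2,\alpha}$ into $G_\alpha$ with compatible $(1+\varepsilon)$-embeddings, form $\mathcal{G}' \subseteq \prod_\alpha G_\alpha$ as the sup-normed sublattice generated by the net-level images, and define $G = \mathcal{G}'/\mathcal{G}'_0$. The induced maps $g_j: F_j \to G$ are isometric embeddings by the verbatim argument of Theorem~\ref{t:lattice-amalg}, and $G$ has density character at most $\kappa$ because $\mathcal{G}'$ is generated as a Banach lattice by the images of dense subsets of $F_1$ and $F_2$, each of cardinality at most $\kappa$.

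The main obstacle is controlling distortions along an uncountable directed set, since the dyadic $\varepsilon/2^n$ bookkeeping of the separable case does not transfer directly. The remedy is to arrange the directed set $I$ so that for each precision level $n \in \N$ the distortions are uniformly bounded by $\varepsilon/2^n$ on a cofinal tail, by interleaving precision indices with the saturation of the first step. A cleaner but less self-contained alternative would be transfinite induction on $\kappa$, with base case Theorem~\ref{t:lattice-amalg}; however, ensuring that successor-stage amalgamations coherently extend preceding ones over the full $E$ (rather than just over a dense subchain) requires genuine bookkeeping of its own, so the net-based route seems preferable.
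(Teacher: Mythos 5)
Your proposal takes a genuinely different route from the paper, and in fact the alternative you dismiss at the end is precisely what the paper does. The paper proves Theorem~\ref{t:arbitrary-lattice-amalg} by ordinal induction on the density character $\kappa$, with Theorem~\ref{t:lattice-amalg} as the base case: it writes $E^\beta = BL((z_\alpha)_{\alpha<\beta})$ and $F_i^\beta = BL((x^i_\alpha)_{\alpha<\beta}\cup f_i(E^\beta))$ for $\beta<\kappa$, applies the induction hypothesis to each triple $(E^\beta, F_1^\beta, F_2^\beta)$ independently to get exact amalgams $G^\beta$, and then glues by forming the sublattice $\mathcal G\subseteq\prod_\beta G^\beta$ generated by the images of convergent nets and quotienting by the ideal $\mathcal G_0$ of null nets. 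Your objection --- that successor-stage amalgamations must ``coherently extend preceding ones over the full $E$'' --- does not apply: no coherence between the $\psi_i^\beta$ for different $\beta$ is required, because the quotient by $\mathcal G_0$ absorbs the incoherence exactly as it absorbs the $\varepsilon/2^n$ defects in the separable proof. Well-definedness, isometry, commutativity $g_1f_1=g_2f_2$, and the density-character bound all follow from levelwise exactness plus convergence of the representing nets, with no distortion bookkeeping at all in the inductive step. This is the principal payoff of the induction route: all finite-dimensional approximation is confined to the separable base case.

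Your preferred net-based route, by contrast, rests on an ingredient you assert but do not prove and which is not routine: a $\kappa$-analogue of \cite[Proposition 2.2]{leungLi}, i.e.\ that every Banach lattice of density character $\kappa$ embeds into an overlattice of density character $\kappa$ that is the closure of an \emph{upward-directed} union of finite-dimensional sublattices. Finite branchability as defined in the paper forces separability (a finitely branching tree has countably many levels with finite branching), so the non-separable statement is genuinely new. The sticking point is directedness: two finite-dimensional sublattices of $F^{**}$ produced by local approximation need not sit inside a common finite-dimensional sublattice of $F^{**}$; the lattice they generate is merely finitely generated, and re-approximating it pushes you into higher biduals, so the ``transfinite upward-saturation'' needs a real argument to close off while keeping the density character at $\kappa$. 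The subsequent steps (net versions of Lemmas~\ref{l:quotient-lift} and the levelwise use of Corollary~\ref{c:fd-approx-amalg}, plus arranging $\varepsilon_\alpha\to 0$ along a cofinal chain) are plausible once that ingredient is in hand, but as written the proposal trades a short induction for an unproven structure theorem. I would recommend adopting the induction on $\kappa$; your own quotient-by-null-nets mechanism is exactly what makes it work.
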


\begin{proof}
	We prove this by ordinal induction over the density character $\kappa$.  For the base case of $\kappa = \aleph_0$, this was already proven in Theorem \ref{t:lattice-amalg}.  Suppose now that we have shown the same for all lattices of density character less than $ \kappa$.   Let  $(z_\gamma)_{\gamma <\kappa}$ be a $\kappa$-sequence dense in $E$, and let $(x^i_\alpha)_{\alpha < \kappa}$ be $\kappa$-length sequences dense in $F_i$.  Let $E^\beta = BL( (z_\alpha)_{\alpha<\beta})$ and let $F_i^\beta = BL((x^i_\alpha)_{\alpha < \beta} \cup f_i(E^\beta))$.  Then $E^\beta \uparrow E$, $F^\beta \uparrow F$, and $f_i(E^\beta) \subseteq F^\beta_i$. Now each $f_i$ induces an embedding $\phi_i:\mathcal E \rightarrow \mathcal F_i$, where $\phi_i( (y_\beta)_{\beta< \kappa}) = (f_i(y_\beta))_{\beta< \kappa}$. \\
	
	 Both $E^\beta$ and the $F^\beta_i$'s have dense subsets of size strictly less than $\kappa$, so by induction, pick $G^\beta$ and embeddings $\psi_i^\beta: F_i^\beta \rightarrow G^\beta$ such that $ \psi_1^\beta \circ f_1|_{E^\beta} =  \psi_2^\beta \circ f_2|_{E^\beta}$. Let $\psi_i = (\psi_i^\beta)_{\beta < \alpha}$, and let $\mathcal G$ be the sublattice of $\prod_{\beta} G^\beta$ generated by the elements of $\psi_i(\mathcal F_i)$.  Let $\mathcal G_0$ be the ideal in $\mathcal G$ of nets converging in norm to $0$, and let $G = \mathcal G/\mathcal G_0$.  Now let $g_i = q_G \circ \psi_i \circ q_E^{-1}$.  Use the same argument as in Theorem \ref{t:lattice-amalg} to show that each $g_i$ is well defined, an embedding, and together with $G$ give the desired amalgamation.  Finally, $G$ has the desired density character if we restrict it to the lattice generated by $g_1(F_1) \cup g_2(F_2)$.
\end{proof}

We end this section with some additional results on the interplay between the AP and $C$-embeddings.  In each of these cases, we can perturb lattices or maps that change $C$-embeddings into embeddings in exchange for full commutativity or preservation of the original norm:

\begin{theorem}\label{t:isomorphism-to-isometry}
	Let $f:A \rightarrow X$ be a $C$- embedding.  Then there exists a $C$-equivalent renorming $\triple{\cdot}$ of $X$ such that $f:A \rightarrow (X, \triple{\cdot})$ is an embedding. Furthermore, 
	
	\begin{itemize}
		\item 	if $f$ is an expansion (that is, if $f^{-1}$ is contractive), then we can make $\triple{\cdot} \leq \|{\cdot} \|$.
		\item 	if $f$ is a contraction,  then we can make $\triple{\cdot} \geq \| \cdot \|$.
		\item  if $A$ and $X$ are both in $\mathcal K'$, then we can ensure that $(X,\triple{\cdot})$ is also in $\mathcal K'$.
	\end{itemize}

\end{theorem}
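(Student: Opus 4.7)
The plan is to construct $\triple{\cdot}$ by prescribing its closed unit ball $B'$ as a closed solid convex hull built from (a suitable scaling of) $\ball(X)$ and $f(\ball(A))$, and then taking $\triple{\cdot}$ to be the Minkowski functional of $B'$. Since $B'$ will be solid, balanced, convex, and closed, $\triple{\cdot}$ is automatically a lattice norm on $X$, and $C$-equivalence follows directly from inclusions among the generating sets. For the general statement, set $B':=\csch(\tfrac{1}{C}\ball(X)\cup f(\ball(A)))$: the inclusion $\tfrac{1}{C}\ball(X)\subseteq B'$ gives $\triple{\cdot}\leq C\|\cdot\|$, and since $\|f(a)\|\leq C\|a\|$ both generating sets lie in $C\ball(X)$, so $B'\subseteq C\ball(X)$ gives $\|\cdot\|\leq C\triple{\cdot}$. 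For the contraction case I would keep the same $B'$: since $f(\ball(A))\subseteq\ball(X)$ as well, $B'\subseteq\ball(X)$, hence $\triple{\cdot}\geq\|\cdot\|$. For the expansion case I would instead set $B':=\csch(\ball(X)\cup f(\ball(A)))$, giving $\ball(X)\subseteq B'\subseteq C\ball(X)$ and therefore $\|\cdot\|/C\leq\triple{\cdot}\leq\|\cdot\|$.

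The main obstacle is verifying $B'\cap f(A)\subseteq f(\ball(A))$, which is exactly what makes $f$ isometric in the new norm. I would argue by duality. Suppose $\|a\|_A>1$; using the lattice structure of $A^*$ applied to a norming functional for $|a|$, produce a positive $\phi\in A^*_+$ with $\|\phi\|_{A^*}\leq 1$ and $\phi(|a|)=\|a\|_A>1$. Consider $\psi:=\phi\circ f^{-1}$ as a functional on the sublattice $f(A)\subseteq X$; the distortion estimate $\|b\|_A\leq C\|f(b)\|_X$ gives $\|\psi\|\leq C$ (and $\leq 1$ in the expansion case, where $\|b\|_A\leq\|f(b)\|_X$). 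To extend $\psi$ to all of $X$ while preserving both positivity and the norm bound, dominate it by the sublinear functional $p(x):=C\|x^+\|$ (respectively $\|x^+\|$) and apply Hahn--Banach; the extension $\Phi$ satisfies $\Phi(-x)\leq p(-x)=C\|x^-\|$, which forces $\Phi(x)\geq 0$ whenever $x\geq 0$, so $\Phi\in X^*_+$ with $\|\Phi\|_{X^*}\leq C$.

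Now $\Phi(|y|)\leq 1$ on $\tfrac{1}{C}\ball(X)$ (or on $\ball(X)$ in the expansion case), and $\Phi(f(b))=\phi(b)\leq 1$ for $b\in\ball(A)$, so the positive linear functional $\Phi$ is dominated by $1$ on both generating sets of $B'$. Positivity together with solidity and linearity then force $\Phi(|x|)\leq 1$ for every $x\in B'$; but $\Phi(f(|a|))=\phi(|a|)>1$, so $f(|a|)\notin B'$, and since $f$ is a lattice homomorphism and $B'$ is solid, also $f(a)\notin B'$, yielding the desired contradiction. For the $\mathcal K'$ conclusion, when $A,X\in\mathcal K'$ both generating sets of $B'$ have finitely many order extreme points by Corollary \ref{c:4-equvalences}, and the order extreme points of $B'$ are contained in those of the generating sets, so $(X,\triple{\cdot})\in\mathcal K'$ by the same corollary.
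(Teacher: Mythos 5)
Your proof is correct, but it takes a genuinely different route from the paper's. The paper argues primally: for the expansion case it takes $\mathbf{B'} = CSCH\big(f(\mathbf{B}(A)) \cup \mathbf{B}(X)\big)$, supposes $(1+\alpha)f(x)$ is a limit of elements order-dominated by convex combinations $t_n f(x_n) + (1-t_n)y_n$, and squeezes $1+\alpha - t \leq \|f((1+\alpha)x - t x_n)\| \leq (1-t) + \|\delta_n\|$ using only that $f^{-1}$ is contractive; the contraction case is handled symmetrically and the general case is reduced to the expansion case by observing that $Cf$ is an expansive $C^2$-embedding and rescaling the resulting norm. You instead prove the key containment $B' \cap f(A) \subseteq f(\mathbf{B}(A))$ by duality: a positive norming functional $\phi$ for $|a|$, transported to $f(A)$ and extended by Hahn--Banach under the sublinear majorant $C\|x^+\|$ to a positive $\Phi \in X^*_+$ with $\|\Phi\| \leq C$, which is then bounded by $1$ on both generating sets of $B'$ (hence on all of $CSCH$, by positivity and the description of the solid convex hull) but exceeds $1$ at $f(|a|)$. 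Your approach buys a uniform treatment of all three cases at once and replaces the paper's somewhat delicate limit/order-domination computation with a clean separation argument; the cost is the extra machinery of positive Hahn--Banach extensions, where one must (as you do) check that domination by $C\|x^+\|$ simultaneously forces positivity and the norm bound, and that $\phi(b) \leq \phi(b^+) \leq \|b^+\| \leq C\|f(b)^+\|$ so the majorization holds on $f(A)$. The $C$-equivalence bookkeeping and the $\mathcal K'$ conclusion via Corollary \ref{c:4-equvalences} match the paper's.
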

\begin{proof}
	
	We start with a proof for the case when $f$ is an expansion. Let $\mathbf{B'} = CSCH \big(f(\mathbf{B}(A)) \cup \mathbf{B}(X) \big)$  be the unit ball of $\triple{\cdot}$. Observe that $\mathbf{B'} \supseteq \mathbf{B}(X)$ and $f(\mathbf{B}(A)) \subseteq C\mathbf{B}(X)$, so $ \frac{1}{C} \| \cdot \| \leq  \triple{\cdot} \leq \| \cdot \|$. \\
	
	We now show that $f:A\rightarrow (X, \triple{\cdot})$ is an embedding. Suppose that there exist $z_n \leq t_nf(x_n) + (1-t_n)y_n$ with $(1+\alpha)f(x) = \lim_n z_n$, with $x, x_n, y_n \geq 0$, $\alpha \geq 0$, $\|x_n\|, \|y_n\| \leq 1$, $0 \leq t_n \leq 1$, and $\|x\| = 1$.  By compactness, we can suppose $t_n$ converges to $t$, and just let $z_n \leq tf(x_n) + (1-t)y_n$.  Furthermore, we can assume that $\|f((1+\alpha )x -tx_n)\|\rightarrow b_x$.  Then for all $n$, we have $$f((1+\alpha)x - tx_n) \leq  (1-t)y_n + \delta_n$$ with $\|\delta_n\| \rightarrow 0$.  We then have 
	\[ 1+\alpha - t \leq \|f( (1+\alpha)x - tx_n) \| \leq (1-t) +\|\delta_n\|. \]
	Thus $1+\alpha - t \leq b_x \leq 1-t$, so $\alpha = 0$.\\
	
	For contractive $f$, let $ \mathbf{B'} $ be the closed solid convex hull of $f(\mathbf{B}(A)) \cup \frac{1}{C} \mathbf{B}(X)$.  Note here that $\frac{1}{C}\mathbf{B}(X) \subseteq \mathbf{B'} \subseteq \mathbf{B}(X)$, so $\| \cdot \| \leq \triple{\cdot} \leq C\|\cdot\|$.  Then use the same type of argument.\\
	
	For the general case, observe that $Cf$ is an expansion which is also a $C^2$-embedding.  Then by the proof of the first case, there is $C^2$-equivalent renorming $\triple{\cdot} \leq \| \cdot \|$ of X with $Cf:A\rightarrow (X, \triple{\cdot})$ an embedding. Now take the new norm of $X$ to be $C \triple{\cdot}$.  Then $f:A \rightarrow (X, C\triple{\cdot})$ is an embedding, and $C\triple{\cdot}$ is $C$-equivalent to $\| \cdot \|$.\\
	
	Finally, if $A, X \in \mathcal K'$, the unit ball of the renormed lattice $(X,\triple{\cdot})$ has finitely many order extreme points, so by Corollary \ref{c:4-equvalences}, $(X,\triple{\cdot})$ is also in $\mathcal K'$.
\end{proof}
Theorem \ref{t:isomorphism-to-isometry} can be used to generalize Theorem \ref{t:arbitrary-lattice-amalg} to diagrams involving $C$-isometries:

\begin{corollary}\label{c:morph-to-met-amalg}

Let $f_i:E \rightarrow F_i$ with $i = 1,2$ be $C_i$-embeddings for lattices $E$, $F_1$, and $F_2$.  Then:
\begin{itemize}
	\item  There exist a lattice $G$ and $C_i$-embeddings $g_i:F_i \rightarrow G$ such that $g_1 \circ f_1 = g_2 \circ f_2$. 
	\item There exist a lattice $G$, an embedding $g_1:F_1 \rightarrow G$, and a $C_1C_2$-embedding $g_2:F_2\rightarrow G$ such that $g_1 \circ f_1 = g_2 \circ f_2$. 
	\item If $E,F_1$, and $F_2$ are in $\mathcal K'$, we can ensure $G\in \mathcal K'$ as well.
\end{itemize}
\end{corollary}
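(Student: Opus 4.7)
The plan is to reduce each bullet to a true amalgamation of lattice embeddings by using Theorem \ref{t:isomorphism-to-isometry} to absorb the distortion into a $C$-equivalent renorming of the appropriate codomain, and then invoking the amalgamation machinery already established (Theorem \ref{t:arbitrary-lattice-amalg} in general, or Theorem \ref{t:fd-approx-amalg} within $\mathcal{K}'$).

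For the first bullet, I would, for each $i \in \{1,2\}$, apply Theorem \ref{t:isomorphism-to-isometry} to the $C_i$-embedding $f_i : E \to F_i$, obtaining a $C_i$-equivalent lattice renorming $\triple{\cdot}_i$ of $F_i$ such that $f_i : E \to \widetilde{F}_i := (F_i, \triple{\cdot}_i)$ is a genuine embedding. Applying Theorem \ref{t:arbitrary-lattice-amalg} to the diagram of embeddings $f_1 : E \to \widetilde{F}_1$ and $f_2 : E \to \widetilde{F}_2$ yields a lattice $G$ together with embeddings $\tilde{g}_i : \widetilde{F}_i \to G$ satisfying $\tilde{g}_1 \circ f_1 = \tilde{g}_2 \circ f_2$. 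Viewing $g_i := \tilde{g}_i$ as a set map out of $F_i$ with its original norm, the $C_i$-equivalence of $\|\cdot\|_{F_i}$ and $\triple{\cdot}_i$ immediately upgrades each $g_i$ to a $C_i$-embedding, and commutativity is preserved since the underlying maps are unchanged.

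For the second bullet, the key observation is that $f_2 \circ f_1^{-1} : f_1(E) \to F_2$ is a lattice $C_1 C_2$-embedding (using that $f_1^{-1} : f_1(E) \to E$ is a $C_1$-embedding and $f_2$ is a $C_2$-embedding). Applying Theorem \ref{t:isomorphism-to-isometry} to $f_2 \circ f_1^{-1}$ produces a $C_1 C_2$-equivalent renorming $\widetilde{F}_2$ of $F_2$ under which $f_2 \circ f_1^{-1} : f_1(E) \to \widetilde{F}_2$ becomes an embedding. I then amalgamate the ordinary inclusion $\iota : f_1(E) \hookrightarrow F_1$ (an embedding, since $F_1$ keeps its original norm) with this new embedding $f_2 \circ f_1^{-1} : f_1(E) \to \widetilde{F}_2$ via Theorem \ref{t:arbitrary-lattice-amalg}, producing $G$ and embeddings $g_1 : F_1 \to G$ and $\tilde{g}_2 : \widetilde{F}_2 \to G$ with $g_1|_{f_1(E)} = \tilde{g}_2 \circ f_2 \circ f_1^{-1}$. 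This rearranges to $g_1 \circ f_1 = \tilde{g}_2 \circ f_2$. Then $g_1$ is an embedding by construction, and setting $g_2 := \tilde{g}_2$ regarded as a map out of $F_2$ with its original norm, $g_2$ is a $C_1 C_2$-embedding.

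For the third bullet, when $E, F_1, F_2 \in \mathcal{K}'$, the $\mathcal{K}'$ clause of Theorem \ref{t:isomorphism-to-isometry} keeps each renormed codomain $\widetilde{F}_i$ in $\mathcal{K}'$ (noting that $f_1(E)$ is a sublattice of $F_1 \in \mathcal{K}'$, so it lies in $\mathcal{K}'$ as well). Replacing Theorem \ref{t:arbitrary-lattice-amalg} with Theorem \ref{t:fd-approx-amalg} applied with $C = 1$ (i.e., the AP of $\mathcal{K}'$) in the amalgamation step then guarantees $G \in \mathcal{K}'$. The main conceptual point is simply the renorming trick of pushing the distortion into the target norm so that an isometric amalgamation applies; once that is done, the rest is bookkeeping to verify that translating back to the original norms yields the claimed embedding constants.
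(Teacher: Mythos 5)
Your proposal is correct. For the first and third bullets your argument is essentially the paper's: renorm each $F_i$ via Theorem \ref{t:isomorphism-to-isometry} so that $f_i$ becomes isometric, amalgamate with Theorem \ref{t:arbitrary-lattice-amalg} (or Theorem \ref{t:fd-approx-amalg} in the $\mathcal K'$ case), and translate back. For the second bullet you take a genuinely different route. The paper simply reuses the output of the first bullet and then renorms the amalgam $G$ itself with a $C_1$-equivalent norm so that $g_1$ becomes isometric, at which point $g_2$ automatically becomes a $C_1C_2$-embedding. You instead restructure the diagram at the outset: you replace the common domain $E$ by the sublattice $f_1(E)\subseteq F_1$, observe that $f_2\circ f_1^{-1}$ is a $C_1C_2$-embedding, push all the distortion onto a single renorming of $F_2$, and amalgamate the honest inclusion $f_1(E)\hookrightarrow F_1$ against this. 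Both are valid; the paper's version is slightly more economical because it derives the second bullet directly from the first, whereas yours avoids renorming the (possibly large) amalgam $G$ and produces an isometric $g_1$ by construction rather than by a posteriori renorming. Your observation that $f_1(E)$ is a sublattice of $F_1$, so that the restriction and inverse make sense and the $\mathcal K'$ clause still applies by heredity, is the one extra ingredient your route needs, and you supply it.
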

	\begin{proof}
 For the first part, let $F'_i = (F_i, \triple{\cdot})$ be $C_i$-equivalent renormings such that $f_i$ is an embedding into $F'_i$. By Theorem \ref{t:arbitrary-lattice-amalg} (Theorem \ref{t:fd-approx-amalg}), there exists $G$ and embeddings $g_i:F'_i \rightarrow G$ such that $g_1 \circ f_1 = g_2 \circ f_2$. Since $F'_i$ is $C_i$-equivalent to $F_i$, each $g_i$ is a $C_i$-embedding on $F_i$. For the second part, use Theorem \ref{t:isomorphism-to-isometry} to simply renorm $G$ with a $C_1$-equivalent norm $\triple{\cdot}$ so that $g_1:F_1 \rightarrow (G, \triple{\cdot})$ is now an embedding.  Then $g_2:F_2 \rightarrow (G,\triple{\cdot})$ is a $C_1C_2$-embedding. For both parts, $G$ can be in $\mathcal K'$ if $E, F_1,$ and $F_2$ are in $\mathcal K'$.
\end{proof}

\begin{theorem}\label{t:two-embeddings-one-isomorphism}
	Suppose $f: X\rightarrow Y$ is a $(1+\varepsilon)$-embedding, and suppose $X, Y$ are in $\mathcal K$ (or $\mathcal K'$).  Then there exists a lattice $Z \in \mathcal K$ ($\mathcal K'$) and  embeddings $g:X \rightarrow Z$ and $h:Y\rightarrow Z$ such that $\| g - h\circ f \| \leq \varepsilon $.
\end{theorem}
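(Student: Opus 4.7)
My plan is to combine a ``doubling'' construction on $X$ with the amalgamation machinery of Corollary \ref{c:morph-to-met-amalg}. First let $L := X \oplus X$ be the vector-lattice direct sum, equipped with the lattice norm $\|(x_1,x_2)\|_L = \max(\|x_1\|_X,\, \varepsilon\|x_2\|_X)$. Since $\varepsilon \le 1$, the two maps $\alpha,\beta: X \to L$ given by $\alpha(x)=(x,0)$ and $\beta(x)=(x,x)$ are both isometric lattice embeddings, and $(\beta-\alpha)(x) = (0,x)$ has $L$-norm exactly $\varepsilon\|x\|_X$, so $\|\beta-\alpha\|_{\mathrm{op}} = \varepsilon$. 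Moreover, $L \in \mathcal{K}'$ whenever $X \in \mathcal{K}'$, since $L$ sits inside a finite direct sum of $\ell_\infty^m(\ell_1^n)$-spaces.

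Next, apply Corollary \ref{c:morph-to-met-amalg} to the pair $\alpha: X \to L$ (isometric, $C_1=1$) and $f: X \to Y$ (a $(1+\varepsilon)$-embedding, $C_2 = 1+\varepsilon$). This produces a lattice $G$, an isometric embedding $\tilde\alpha: L \to G$, and a $(1+\varepsilon)$-embedding $h: Y \to G$ with $\tilde\alpha \circ \alpha = h \circ f$, and $G$ may be taken in $\mathcal{K}'$ whenever $X,Y \in \mathcal{K}'$. Set $g := \tilde\alpha \circ \beta: X \to G$; as a composition of two isometric lattice embeddings, $g$ is itself isometric. The key identity is
\[
g - h\circ f \;=\; \tilde\alpha\circ\beta - \tilde\alpha\circ\alpha \;=\; \tilde\alpha\circ(\beta-\alpha),
\]
which immediately yields $\|g - h\circ f\|_{\mathrm{op}} \le \|\tilde\alpha\|_{\mathrm{op}}\cdot\|\beta-\alpha\|_{\mathrm{op}} = \varepsilon$.

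The chief obstacle is that $h$ is only a $(1+\varepsilon)$-embedding, not an embedding (isometric), so we are not yet done and must pass to a larger lattice $Z$. My plan here is to exploit the fact that the ``error'' $g(x) - h(f(x)) = \tilde\alpha(0,x)$ lies in the sublattice $\tilde\alpha(\{0\}\times X)\subseteq G$, which is disjoint (in the lattice sense) from $\tilde\alpha(X\times\{0\}) = h(f(X))$. One then applies Theorem \ref{t:isomorphism-to-isometry} to renorm $G$ so that $h$ becomes isometric, and uses the disjointness structure together with a further amalgamation (or a direct subsum construction modeled on the $\ell_\infty^m(\ell_1^n)$ arguments in Section~\ref{s:Amalg}) to build the final $Z$ in which both $g$ and $h$ are isometric, while $\|g - h\circ f\|$ remains bounded by $\varepsilon$. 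Throughout, membership in $\mathcal{K}'$ is preserved thanks to Corollary \ref{c:4-equvalences} together with the $\mathcal{K}'$-compatibility of Theorem \ref{t:isomorphism-to-isometry} and Theorem \ref{t:fd-approx-amalg}. The hardest part will be verifying that the final enlargement can be arranged so that \emph{both} maps become genuinely isometric without enlarging the $\varepsilon$-bound, and it is here that the explicit ``doubling'' structure of $L$, rather than a general appeal to the amalgamation corollary, plays a decisive role.
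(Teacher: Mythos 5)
Your first half is correct as far as it goes: $L=X\oplus X$ with the weighted max norm, the maps $\alpha,\beta$, the estimate $\|\beta-\alpha\|=\varepsilon$, and the appeal to Corollary \ref{c:morph-to-met-amalg} do produce an isometric $g$ and a $(1+\varepsilon)$-embedding $h$ with $\|g-h\circ f\|\le\varepsilon$. But the theorem asks for \emph{both} $g$ and $h$ to be isometric embeddings, and the step where you upgrade $h$ is a genuine gap rather than a verification you have deferred. Your one concrete suggestion --- renorm $G$ via Theorem \ref{t:isomorphism-to-isometry} so that $h$ becomes isometric --- does not close the argument: the new norm is only $(1+\varepsilon)$-equivalent to the old one, so after renorming $g=\tilde\alpha\circ\beta$ is merely a $(1+\varepsilon)$-embedding (you are back at the original problem with the roles of $g$ and $h$ exchanged) and the bound on $\|g-h\circ f\|$ degrades to $(1+\varepsilon)\varepsilon$. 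The disjointness of $\tilde\alpha(\{0\}\times X)$ from $h(f(X))$ is true but does not manufacture an isometric copy of $Y$. The root of the difficulty is that you amalgamated $L$ and $Y$ over the non-isometric map $f$; no version of Corollary \ref{c:morph-to-met-amalg} can return an isometric embedding of $Y$ from that input.

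The missing idea is to double over $X$ and $f(X)$, the latter carrying the norm inherited from $Y$, rather than over two copies of $X$. The paper sets $j_1(x)=x\oplus\frac{1}{1+\varepsilon}f(x)$ and $j_2(f(x))=\frac{1}{1+\varepsilon}x\oplus f(x)$ in $X\oplus_\infty f(X)$; the two-sided bound $\frac{1}{1+\varepsilon}\|f(x)\|\le\|x\|\le(1+\varepsilon)\|f(x)\|$ makes \emph{both} maps isometric lattice embeddings, and $\|j_1(x)-j_2f(x)\|=\frac{\varepsilon}{1+\varepsilon}\max(\|x\|,\|f(x)\|)\le\varepsilon\|x\|$. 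Since $f(X)$ is an honest sublattice of $Y$ and $j_2$ is an isometric embedding of it, Theorem \ref{t:arbitrary-lattice-amalg} (or Theorem \ref{t:fd-approx-amalg} in the $\mathcal K'$ case) amalgamates $Y$ and $X\oplus_\infty f(X)$ over $f(X)$ using only isometric maps $h_1,h_2$, and then $g=h_2\circ j_1$, $h=h_1$ finish the proof with no renorming and no loss in the $\varepsilon$-bound. If you want to keep your $L$, you would have to replace the second coordinate of $\beta$ by data recording the $Y$-norm of $f(x)$ --- which is exactly the paper's $j_1$.
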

\begin{proof}
	Let $j_1:X\rightarrow X \oplus_\infty f(X)$, with $j_1(x) = x\oplus \frac{1}{1+\varepsilon}f(x)$.  Let $j_2: f(X) \rightarrow X \oplus_\infty f(X)$ with $j_2(f(x)) = \frac{1}{1+\varepsilon} x \oplus f(x)$.  Note then that since $\frac{1}{1+\varepsilon}\| f(x) \| \leq \|x\| \leq (1+\varepsilon) \|f(x) \|$, $j_1$ and $j_2$ are both embeddings.  Then
	\begin{align*}
	\| j_1(x) - j_2 f(x)\| = & \bigg\| \bigg(1-\frac{1}{1+\varepsilon}\bigg)x \oplus \bigg(\frac{1}{1+\varepsilon} - 1\bigg)f(x) \bigg\|\\ = & \frac{\varepsilon}{1+\varepsilon}\|x \oplus -f(x) \|\leq \varepsilon \|x \|.
	\end{align*}
	If $f$ is surjective, then let $g=j_1$ and $h = j_2$, and we are done.  Otherwise,  $f(X) \subseteq Y$ and $j_2:f(X) \rightarrow X \oplus_\infty f(X)$ in an embedding, so use Theorem \ref{t:arbitrary-lattice-amalg} (or Theorem \ref{t:fd-approx-amalg}) to get a lattice $Z$ in $\mathcal K$ (respectively $\mathcal K'$)  and  embeddings $h_1: Y\rightarrow Z$ and $h_2:X\oplus_\infty f(X) \rightarrow Z$ such that $h_1|_{f(X)} = h_2 \circ j_2$. Then for all $x \in X$, $$\|h_2j_1(x) - h_1f (x) \| = \|h_2 j_1(x) - h_2j_2f(x) \| = \| j_1(x) - h_2(f(x)) \| \leq \varepsilon \|x\|.$$
	Let $g = h_2 \circ j_1$ and $h = h_1$, and we are done. \end{proof}

\begin{corollary}
	Let $E, F_1,F_2$ be lattices in $\mathcal K$ (or $\mathcal K'$), and let $f_j:E\rightarrow F_j$ be $(1+\varepsilon)$-embeddings.  Then there exist $H\in \mathcal K$ ($\mathcal K'$) and embeddings $g_j:F_j \rightarrow G$ such that $\|g_1 \circ f_1 - g_2\circ f_2\| \leq 2\varepsilon$.
\end{corollary}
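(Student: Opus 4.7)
The plan is to reduce the approximate-commutativity problem to an exact-commutativity problem by first replacing each pair $(f_j, E \to F_j)$ with an embedding that only differs from $f_j$ by a small norm error, and then applying the Amalgamation Property to those embeddings.

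More precisely, the first step is to apply Theorem \ref{t:two-embeddings-one-isomorphism} to each of the $(1+\varepsilon)$-embeddings $f_j: E \to F_j$. This produces, for each $j \in \{1,2\}$, a lattice $Z_j \in \mathcal{K}$ (respectively $\mathcal{K}'$) together with embeddings $\alpha_j: E \to Z_j$ and $\beta_j: F_j \to Z_j$ satisfying $\|\alpha_j - \beta_j \circ f_j\| \leq \varepsilon$. The second step is to apply the Amalgamation Property (Theorem \ref{t:lattice-amalg} in the $\mathcal{K}$ case, after noting that any finitely generated amalgam can be realized in $\mathcal{K}$ via the remark following that theorem; or Theorem \ref{t:fd-approx-amalg} in the $\mathcal{K}'$ case) to the two honest embeddings $\alpha_1: E \to Z_1$ and $\alpha_2: E \to Z_2$. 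This yields a lattice $G$ in the appropriate class and embeddings $\gamma_j: Z_j \to G$ with $\gamma_1 \circ \alpha_1 = \gamma_2 \circ \alpha_2$.

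The third and final step is to set $g_j := \gamma_j \circ \beta_j: F_j \to G$. Each $g_j$ is an embedding as a composition of embeddings, and using that $\gamma_j$ is an isometry we estimate
\begin{align*}
\|g_1 \circ f_1 - g_2 \circ f_2\|
&\leq \|\gamma_1 \circ \beta_1 \circ f_1 - \gamma_1 \circ \alpha_1\| + \|\gamma_2 \circ \alpha_2 - \gamma_2 \circ \beta_2 \circ f_2\| \\
&= \|\beta_1 \circ f_1 - \alpha_1\| + \|\alpha_2 - \beta_2 \circ f_2\| \leq 2\varepsilon,
\end{align*}
where the equality on the exact amalgam $\gamma_1 \circ \alpha_1 = \gamma_2 \circ \alpha_2$ eliminates the middle term. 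There is no real obstacle here: the work has already been done in Theorem \ref{t:two-embeddings-one-isomorphism}, which converts each $(1+\varepsilon)$-embedding into a pair of isometric embeddings close in norm, and in the Amalgamation Property, which then glues the two isometric copies of $E$ together. The only minor bookkeeping is to ensure that the ambient class is preserved at each stage, which is built into the statements of both theorems invoked.
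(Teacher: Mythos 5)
Your proposal is correct and follows essentially the same route as the paper: apply Theorem \ref{t:two-embeddings-one-isomorphism} to each $f_j$ to produce embeddings $\alpha_j$ and $\beta_j$ with $\|\alpha_j - \beta_j\circ f_j\|\leq\varepsilon$, amalgamate the $\alpha_j$ exactly, and estimate via the triangle inequality using that the $\gamma_j$ are isometric embeddings. The only (immaterial) difference is notational and in which amalgamation theorem is cited for the $\mathcal K$ case.
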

\begin{proof}
	By Theorem \ref{t:two-embeddings-one-isomorphism} there exist $F'_j \in \mathcal K$ ($\mathcal K'$) and embeddings $f'_j:E \rightarrow F'_j$ and $\phi_j:F_j\rightarrow F'_j$ such that $\|f'_j - \phi_j \circ f_j \| \leq \varepsilon$.  Now use Theorem \ref{t:arbitrary-lattice-amalg} (or  Theorem \ref{t:fd-approx-amalg}) to get $H\in \mathcal K$ ($\mathcal K'$) and embeddings $g'_j:F'_j \rightarrow H$ with $g'_1\circ f'_1 = g'_2 \circ f'_2$. Let $g_j = g'_j \circ \phi_j$.  Then
	\begin{align*} 
	\|g_1 \circ f_1 - g_2 \circ f_2 \| = & \|g'_1\circ \phi_1 \circ f_1 - g'_2\circ \phi_2 \circ f_2 \|\\
	 \leq &\| g'_1\circ( \phi_1 \circ f_1 - f'_1) \| +  \| g'_2\circ( \phi_2 \circ f_2 - f'_2) \| \leq 2\varepsilon.  \end{align*}
\end{proof}

\section{The approximately ultra-homogeneous separable lattice $\mathfrak{BL }$}\label{s:BL}

The main result of this section is:
\begin{theorem}\label{t:lattices-are-fraisse}
	The class $\mathcal{K}$ of finitely generated separable Banach lattices is a Fra\"iss\'e class. Thus there exists a separable approximately ultra-homogeneous Banach lattice $\mathfrak {BL}$.
\end{theorem}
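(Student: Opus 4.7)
The strategy is to verify the Fra\"iss\'e axioms (HP, JEP, NAP, PP, CP) for the class $\mathcal{K}$ of finitely generated separable Banach lattices, and then invoke \cite[Theorem 3.21]{ben15} to produce $\mathfrak{BL}$ as the Fra\"iss\'e limit. Most of the heavy lifting has already been done in Section \ref{s:Amalg}, so the plan is essentially to collect the ingredients and check the remaining conditions.

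First, HP is immediate from the definition of $\mathcal{K}$: any finitely generated sublattice of a finitely generated lattice is itself finitely generated. For JEP, given $A_1, A_2 \in \mathcal{K}$, one may either apply the Amalgamation Property to the trivial sublattice, or simply take $A_1 \oplus_\infty A_2$, which is finitely generated by the union of the generating sets. NAP is subsumed by the full AP, which was established in Theorem \ref{t:arbitrary-lattice-amalg}; the remark following Theorem \ref{t:lattice-amalg} ensures that the amalgam can be taken inside $\mathcal{K}$ by embedding it into $\mathcal{U}$. The CP is handled by the Preliminaries: the language $\mathcal{L} = (+, \mathbb{R}, \wedge, \vee)$ has function symbols with uniform moduli of continuity, so the maps $(\bar{a}, \bar{b}) \mapsto f^{(\bar{a})}(\bar{a})$ and $\bar{a} \mapsto P^{\bar{a}}(\bar{a})$ are continuous on $\mathcal{K}_n$ with respect to $d^{\mathcal{K}}$.

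The main remaining work, and the step I expect to be the main obstacle, is verifying the PP: that $d^{\mathcal{K}}$ is a separable and complete pseudometric on each $\mathcal{K}_n$. For separability, I would exploit the isometrically universal lattice $\mathcal{U}$: by universality, every $\bar{a}$ with $\langle \bar{a} \rangle \in \mathcal{K}_n$ embeds into $\mathcal{U}$, and using the AP one can show
\[
d^{\mathcal{K}}(\bar{a}, \bar{b}) = \inf \{ d(\phi(\bar{a}), \psi(\bar{b})) : \phi, \psi \text{ embeddings into } \mathcal{U} \}.
\]
Since $\mathcal{U}$ is separable, any countable dense subset of $\mathcal{U}^n$ yields a countable $d^{\mathcal{K}}$-dense subset of $\mathcal{K}_n$. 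For completeness, given a $d^{\mathcal{K}}$-Cauchy sequence $(\bar{a}^k)_k$ in $\mathcal{K}_n$, the plan is to telescope: pass to a subsequence with $d^{\mathcal{K}}(\bar{a}^k, \bar{a}^{k+1}) < 2^{-k}$, then iteratively apply AP to realize all $\bar{a}^k$ inside a single ambient separable lattice (which can be taken to be $\mathcal{U}$) with $d(\bar{a}^k, \bar{a}^{k+1}) < 2^{-k+1}$. The resulting sequence is Cauchy in $\mathcal{U}^n$, so converges to some $\bar{a} \in \mathcal{U}^n$, and the sublattice $\langle \bar{a} \rangle \subseteq \mathcal{U}$ is an element of $\mathcal{K}_n$ that serves as the $d^{\mathcal{K}}$-limit.

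Once all five axioms are in hand, \cite[Theorem 3.21]{ben15} produces a separable Fra\"iss\'e limit $\mathfrak{BL}$ which is isometrically universal for $\mathcal{K}$-structures and approximately ultra-homogeneous in the sense stated in the introduction. The only genuinely delicate point will be the bookkeeping in the completeness argument, where one must be careful to keep the iterated amalgams inside $\mathcal{K}$ (using the remark after Theorem \ref{t:lattice-amalg}) so that the limit tuple genuinely lives in a finitely generated sublattice of a separable ambient space; the remaining axioms are either trivial or follow directly from results already in Section \ref{s:Amalg}.
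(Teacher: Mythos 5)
Your proposal is correct and follows essentially the same route as the paper: HP, JEP, and CP are dispatched exactly as you describe, AP is imported from Section \ref{s:Amalg}, separability of $d^{\mathcal K}$ is obtained via a countable dense subset of $\mathcal U$, and completeness is proved by telescoping a Cauchy sequence of tuples through iterated amalgamations into a single separable ambient lattice (the paper realizes this ambient space as an inductive limit of the amalgams $B^k_i$, which is the same bookkeeping you flag as the delicate point). No substantive differences.
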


The level of homogeneity in $\mathfrak {BL}$ cannot significantly be strengthened. For one,  $\mathfrak{ BL}$ can only be "approximately" ultra-homogeneous, since no lattice automorphism can map non-weak units to weak units. $\mathfrak{BL}$ is clearly also atomless.  If there were an atom $e$ in $\mathfrak{BL}$, any automorphism would have to map it to another atom. Thus two embeddings $g_1:\R \rightarrow <e> \subseteq \mathfrak{BL}$ and  $g_2: \R \rightarrow <x> \subseteq \mathfrak {BL}$ such that $x$ both is disjoint from $e$ and not an atom cannot be arbitrarily approximated by an automorphism. \\

$\mathfrak {BL}$ is isometrically universal for separable Banach lattices, but it is not isometric to $\mathcal U$ because the latter is not approximately ultra-homogeneous and Fra\"iss\'e limits are unique up to isometry.  Indeed, let $<e>$ be a one-dimensional lattice generated by $e$, let $f_1(e) = a:= \vec{1}_\Delta \otimes \chi_{[0,1]}$,  $f_2(e) = b:= \vec{1}_K \otimes \chi_{[0,1]}$, where $K \subseteq \Delta$ is a proper clopen subset, and let $b' = a-b$.  Let $\phi$ be any automorphism over $\mathcal U$.  Now the sets $K_b=\{k\in \Delta: \| \phi(b)(k) \|_1 = 1\}$ and $K_{b'}=\{k\in \Delta: \| \phi(b')(k) \|_1 = 1\}$ are non-empty, and furthermore $b(K_{b'}) = 0$, and vice versa, since $\phi(<b,b'>)$ is isometric to $\ell_\infty^2$.  It follows that for $k\in K_{b'}$, we have $\|\phi(b)(k) -a(k) \|_1 = 1$, so $\| \phi(b) -a\| \geq 1$.\\

\begin{proof}[Proof of theorem]
It is clear that $\mathcal K $ has the HP and the JEP. It also has the CP by virtue of the fact that each function symbol in the language of Banach lattices has a fixed modulus of continuity independent of its interpretation. By Theorem \ref{t:lattice-amalg}, it has the AP.  It remains to show that it has the PP.  We need to show that the class of finitely generated Banach lattices is both separable and complete under the metric $d^{\mathcal K}$.   For separability, let $(x^n)_n$ be a countable dense subset of $\mathcal U$.  Then the set $\{ <x_{i_1},...,x_{i_n} > \}$ of lattices generated by finitely many elements in $(x_n)_n$ is itself a countable dense subset of $\mathcal K_n$.  \\

To show completeness, we use Theorem \ref{t:lattice-amalg} and the fact that Banach lattices are closed under direct limits.  Let $( \overline{a_i})_i$ be a Cauchy sequence of tuples generating structures in $\mathcal K_n$.  By passing to a subsequence if necessary, we can assume that $d^{\mathcal K}( \overline{a_{i}},  \overline{a_{i+1}}) < \frac{1}{2^{i+1}}$. For $ \overline{a_i}$ and $ \overline{a_{i+1}}$, let $B_i^1$ be a finitely generated lattice containing isometric copies of $< \overline{a_i}>$ and $< \overline{a_{i+1}} >$ such that $d( \overline{a_i},  \overline{a_{i+1}}) < d^\mathcal{K}( \overline{a_i},  \overline{a_{i+1}}) + \frac{1}{2^{i+1}}$.  Note then for each $i$, we have embeddings $< \overline{a_{i+1}} > \rightarrow B^1_i, B^1_{i+1}$, so use amalgamation to embed $B^1_i$ and $B^1_{i+1}$ into some finitely generated space $B^2_i$ where the associated diagram commutes.  Proceed inductively in a similar manner: each $B^k_{i+1}$ injects into $B^{k+1}_i$ and $B^{k+1}_{i+1}$, so use amalgamation to inject them into some finitely generated $B^{k+2}_i$.  The resulting commutative diagram illustrates the process:

\begin{center}
	\begin{tikzcd}
	< \overline{a_1}> \arrow{r} & B_1^1 \arrow{r}  & B^2_1 \arrow{r}  &B^3_1 \arrow{r} & \dots \\
	< \overline{a_2} > \arrow{r} \arrow{ru} & B_2^1 \arrow{r} \arrow {ru} & B^2_2 \arrow {ru} \\
	< \overline{a_3} > \arrow{r} \arrow{ru} & B_3^1  \arrow {ru} \\	
	\vdots \arrow{ru} 
	\end{tikzcd}	
\end{center}

Let $X$ be the closed inductive limit of the sequence of lattices $(B^n_1)_n$.  $X$ is itself separable, though it need not be finitely generated.  It also contains an isometric copy of each $< \overline{a_{i}}>$ and for each $ \overline{a_{i}},  \overline{a_{j}} \subseteq X$ with $i \leq j$, we have $$d( \overline{a_{i}},  \overline{a_{j}}) < \sum_{k=i}^{j-1} \bigg( d^{\mathcal K}(\overline{a_k}, \overline{a_{k+1}}) + \frac{1}{2^{k+1}} \bigg)  < \sum_{k=i}^{j-1} 2^{-k} < 2^{-i+1.}$$ Thus $( \overline{a_{i}})_i$, as a sequence of tuples in $X$, is Cauchy.  Let $ \overline{a} = \lim_i  \overline{a_{i}}$.  Since $X$ is complete, the sublattice $< \overline{a} >$ exists, which implies the completion of the metric $d^\mathcal{K}$. Thus $\mathcal K$ has the PP, and we are done.
\end{proof}

We continue with an additional characterization of $\mathfrak{BL}$.  In particular, $\mathfrak{ BL}$ is finitely branchable and finitely generated. To this end, we concentrate on the sub-class $\mathcal K'$.  \\

 The $\ell_\infty^m(\ell_1^n)$ lattices are in certain ways analogues of $\ell_\infty^n$ spaces. For one, recall the definition of an injective Banach space $E$: If $T:F\rightarrow E$ is a linear map and $F$ is a subpace of $G$, then there exists a linear map $\hat{T}:G\rightarrow E$ extending $T$ such that $\| T \| = \|\hat{T} \|$. There is also a lattice analogue of injectivity: We say $E$ is an injective lattice if for all lattices $F\subseteq G$ and any positive linear maps $T:F\rightarrow E$, then there exists a positive linear map $\hat{T}:G\rightarrow E$ extending $T$ such that $\| T \| = \|\hat{T} \|$.  The injective finite dimensional Banach spaces are exactly the $\ell_\infty^n$ spaces.  By \cite[Theorem 5.2]{cart75}, the $\ell_\infty$-sums of finite dimensional $\ell_1$ spaces make up the collection of finite dimensional injective lattices.  Furthermore, the Gurarij space in particular can be constructed as an inductive limit of $\ell_\infty^n$ Banach spaces.  We will now also show that $\mathfrak{BL}$ can be constructed as an inductive limit of $\ell_\infty^m(\ell_1^n)$ lattices.

\begin{lemma}\label{l:incomplete-fraisse}
	 $\mathcal K'$ is an incomplete Fra\"iss\'e class that is dense in $\mathcal K$.  In particular, the Fra\"iss\'e metric $d^{\mathcal K'}$ isometrically coincides with $d^{\mathcal K}$.  
\end{lemma}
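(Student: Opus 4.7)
The plan is to verify the Fra\"iss\'e-class axioms for $\mathcal K'$, then establish density in $\mathcal K$, and finally show that the two Fra\"iss\'e metrics agree on $\mathcal K'$-tuples. The key tool throughout is the universal lattice $\mathcal U = C(\Delta, L_1[0,1])$, which by Section \ref{s:preliminaries} arises as an inductive limit $\mathcal U = \overline{\bigcup_n U_n}$ with each $U_n \in \mathcal K'$.

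First I would dispatch the easy axioms. HP is immediate because a sublattice of a sublattice of $\ell_\infty^m(\ell_1^n)$ is itself a sublattice of $\ell_\infty^m(\ell_1^n)$. JEP follows from taking an $\ell_\infty$-direct sum: if $X_j \subseteq \ell_\infty^{m_j}(\ell_1^{n_j})$ for $j=1,2$, then $X_1 \oplus_\infty X_2$ embeds into $\ell_\infty^{m_1+m_2}(\ell_1^{\max(n_1,n_2)})$. AP is exactly Theorem \ref{t:fd-approx-amalg}. CCP is inherited from the fixed moduli of uniform continuity possessed by the Banach-lattice operations. Finally, WPP follows from the separability of $\mathcal U$ together with the density statement below: a countable dense sequence in $\bigcup_n U_n$ yields, via all finite tuples, a countable $d^{\mathcal K'}$-dense subfamily of $\mathcal K'_n$.

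For density of $\mathcal K'$ in $\mathcal K$, given a tuple $\bar a$ generating $A \in \mathcal K_n$, I would embed $A$ isometrically into $\mathcal U$ and, using that $\bigcup_n U_n$ is dense in $\mathcal U$, choose a tuple $\bar a' \subseteq U_n$ with $d(\bar a, \bar a') < \varepsilon$. Then $\langle \bar a' \rangle \subseteq U_n$ lies in $\mathcal K'$ by HP, and the common ambient $\mathcal U$ witnesses $d^{\mathcal K}(\bar a, \bar a') \le d(\bar a, \bar a') < \varepsilon$.

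The metric coincidence $d^{\mathcal K'} = d^{\mathcal K}$ on $\mathcal K'$-tuples is the main obstacle. One direction is trivial since $\mathcal K' \subseteq \mathcal K$. For the reverse, suppose $A_1, A_2 \in \mathcal K'$ are generated by $\bar a_j$ and embed via $\phi_j$ into a common $C \in \mathcal K$ with $d(\phi_1(\bar a_1), \phi_2(\bar a_2)) < \delta$. Since $C$ need not be finite-dimensional, Lemma \ref{l:fd-into-1infty-l1} cannot be applied directly; instead I would embed $C \hookrightarrow \mathcal U$, fix $\eta > 0$, and apply Lemma \ref{l:finite-lattice-approximation} to each finite-dimensional $\phi_j(A_j)$ to obtain a common $n$ and $(1+\eta)$-isometries $g_j : \phi_j(A_j) \to U_n$ with $\|g_j - \mathrm{Id}\| < \eta$. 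The compositions $f_j = g_j \circ \phi_j : A_j \to U_n$ are then $(1+\eta)$-embeddings whose generator-tuples are within $\delta + 2\eta \max_j \|\bar a_j\|$ of each other. Now invoke Theorem \ref{t:two-embeddings-one-isomorphism} to convert each $f_j$ into an honest embedding $k_j : A_j \to Z_j \in \mathcal K'$ together with an embedding $h_j : U_n \to Z_j$ satisfying $\|k_j - h_j \circ f_j\| \le \eta$, and then apply the AP for $\mathcal K'$ to amalgamate $Z_1, Z_2$ over $U_n$ into some $W \in \mathcal K'$ via embeddings $l_j : Z_j \to W$ with $l_1 \circ h_1 = l_2 \circ h_2$. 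A triangle inequality along $l_j k_j$ versus $l_j h_j f_j$, together with the identity $l_1 h_1 = l_2 h_2$ in the middle, gives $d(l_1 k_1(\bar a_1), l_2 k_2(\bar a_2)) \le \delta + O(\eta)$, and letting $\eta \to 0$ closes the argument.
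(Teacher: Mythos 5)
Your proposal is correct and follows the same overall architecture as the paper's proof: HP, JEP and AP are dispatched exactly as in the paper, density is obtained by approximating generators inside the finitely branching skeleton of $\mathcal U$, and the metric coincidence is reduced to repairing the $(1+\eta)$-embeddings produced by Lemma \ref{l:finite-lattice-approximation}. The one place you genuinely diverge is the repair step: the paper first applies the renorming result (Theorem \ref{t:isomorphism-to-isometry}) to make $f_B$ an honest embedding into a renormed copy $D'\in\mathcal K'$, and then invokes Theorem \ref{t:two-embeddings-one-isomorphism} only once, for $f_A$; you instead apply Theorem \ref{t:two-embeddings-one-isomorphism} symmetrically to both $f_1$ and $f_2$ and then amalgamate the two resulting targets over $U_n$ using the AP of $\mathcal K'$. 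Both routes close the estimate with an $O(\eta)$ error; yours is more symmetric and avoids renorming at the cost of an extra amalgamation, while the paper's is shorter because the renormed $D'$ already serves as the common receptacle for $f_B$. One small caveat: your one-line justification of the CCP via fixed moduli of continuity implicitly needs the identification $d^{\mathcal K'}=d^{\mathcal K}\vert_{\mathcal K'}$ (so that $d^{\mathcal K'}$-Cauchy tuples can be realized as norm-Cauchy tuples in a common ambient lattice), which is how the paper organizes that step; since you prove the identification anyway, this is only a matter of ordering, not a gap.
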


\begin{proof}
	
$\mathcal K'$ has the HP by its definition. By Theorem \ref{t:fd-approx-amalg}, it has the AP.  Clearly it also has the JEP: given two  $X,Y \in \mathcal K'$, we also have $A\oplus_\infty B \in \mathcal K'$. To show density in $\mathcal K$,  let $< \overline{a}>$ be a finitely generated lattice, and embed $< \overline{a}>$ into $\mathcal U$.  Let $(x_\sigma)_{\sigma \in T}$ be the finitely branching tree comprised of elements in $\mathcal U$ of the form $\chi_{N_\sigma} \otimes \chi_{Q_k}$ where $Q_k$ is a diadic interval of length $2^{-n}$, $|\sigma| = n$, and $N_\sigma \subseteq \Delta = \{ 0, 1\}^\N $ is the set consisting of all infinite branches starting with $\sigma$. Now $\text{span}((x_\sigma)_{\sigma \in T})$ is dense in $\mathcal U$.  Let $S_n:= \{x_\sigma: |\sigma| = n \}$, and observe that $ \text{span} \{ S_n \}$ is itself a $\ell_\infty^{2^n}(\ell_1^{2^n})$ space. Given $\varepsilon > 0$, choose $n$ and $ \overline{x} \subseteq \text{span} (S_n)$ such that $d( \overline{a},  \overline{x}) < \varepsilon$. Then the lattice $< \overline{x} > \in \mathcal K'$ is sufficiently close to $<\overline{a} >$ in $d^{\mathcal K}$. \\

To show separability of $\mathcal K'$ and the CCP, it is sufficient to show that $d^\mathcal{K}|_{\mathcal K'} = d^{\mathcal K'}$. Clearly $d^\mathcal{K}|_{\mathcal K'} \leq d^{\mathcal K'}$, so we need only to show the opposite inequality. Let $d^{\mathcal K}( \overline{a},  \overline{b} ) = \delta$, and let $\varepsilon > 0$.  Choose embeddings $\phi_A$ and $\phi_B$ from $< \overline{a}>$ and $< \overline{b}>$ into $\mathcal U$ such that $d( \phi_A(\overline{a}), \phi_B( \overline{b})) < \delta + \varepsilon$.   By Lemma \ref{l:finite-lattice-approximation}, given $\varepsilon > 0$, there exist $n\in \N$ and  $(1+\varepsilon)$-embeddings $f_A:A\rightarrow D:=\text{span}(S_n)$ and $f_B:B\rightarrow D$ such that $\|f_A - \phi_A\| < \varepsilon$ and $\|f_B - \phi_B\| < \varepsilon$.  Now note that $d(f_A(\overline{a}), f_B(\overline{b})) < \delta+ 3\varepsilon$. Use Theorem \ref{t:isomorphism-to-isometry} to renorm $D$ with a $(1+\varepsilon)$-equivalent renorming $\triple{\cdot}$ so that $f_B:B\rightarrow D' = (D, \tri{\cdot}) \in \mathcal K'$ is an embedding. Then $f_A$ is a $(1+\varepsilon)^2$-embedding into $D'$.  Finally, use Theorem \ref{t:two-embeddings-one-isomorphism} to get some $C \in \mathcal K' $ and embeddings $g_A: A \rightarrow C$ and $g_{D'}:D' \rightarrow C$ such that $\|g_{D'} \circ f_A - g_A \|\leq 2\varepsilon + \varepsilon^2$.  Then for each $i$, we have
\begin{align*}
\| g_A(a_i)  - g_{D'}f_B (b_i) \|_C & \leq \| g_A(a_i) - g_{D'}f_A(a_i) \|_C + \| g_{D'}f_A(a_i)  - g_{D'}f_B (b_i) \|_C\\
 &\leq 2\varepsilon + \varepsilon^2 + \|f_A(a_i) - f_B(b_i) \|_{D'} \\
 &\leq 2\varepsilon +\varepsilon^2 + (1+\varepsilon)d(f_A(\overline{a}), f_B(\overline{b})) \\
 & \leq 2\varepsilon +\varepsilon^2+ (1+\varepsilon)(\delta + 3\varepsilon) .
\end{align*}

We can let $\varepsilon$  get arbitrarily small, so $d^{\mathcal{K}'}  \leq d^\mathcal{K}|_{\mathcal K'}$, and we are done.
\end{proof}
 It is known that incomplete Fra\"iss\'e classes admit a Fra\"iss\'e limit for the completion of the class, but here we will explicitly show that the construction of the limit $\mathfrak{BL}$ need only involve an increasing sequence of lattices in $\mathcal K'$.  In order to prove the following theorem, we use approximate isometries as described in \cite{ben15}, that is, bi-Katetov maps $\psi: X\times Y \rightarrow [0,\infty]$ with $X$ and $Y$ both metric spaces.   Recall that $\psi$ is bi-Katetov if for all $x,x_0\in X$ and $y,y_0\in Y$, $|\psi(x,y) - d(x,x_0)| \leq \psi(x_0,y)$ and  $|\psi(x,y) - d(y,y_0)| \leq \psi(x,y_0)$. In this context, approximate isometries provide information about how generating tuples $\overline{a}$ and $\overline{b} $ relate in ambient spaces.  \\

Approximate isometries can be induced by finite partial embeddings, i.e., partial functions $f:X \rightharpoonup Y$, with $\text{dom}(f) = X_0$ a finite set, which induce lattice embeddings $f:<X_0> \rightarrow Y$. More generally, for any $X_0 \subseteq X$ and a (not necessarily finite) partial embedding $f:X_0\rightarrow Y$, we let $\psi_f:X\times Y \rightarrow \R$ be defined by $\psi_f(x,y) = \inf_{z\in X_0} \|x-z\| + \|y- f(z)\|$.   Observe that if $x \in X_0$, then $\psi_f(x,y) = \|f(x) - y\|$.  If $X_0 \subseteq X$, we also have an approximate isometry $\psi_{Id_{X_0}}:X_0\times X \rightarrow \R$ with $Id_{X_0}$ the inclusion maps from $X_0$ to $X$, where $\psi_{Id_{X_0}}(x,y) = \| x - y\|$.\\

There is also a ``pseudoinverse'' operation: if $\psi(x,y)$ is an approximate isometry, we let $\psi^*(y,x) = \psi(x,y)$. Clearly $\psi^{**} = \psi$. We can also ``compose'' approximate isometries.   If $\phi:X\times Y \rightarrow \R$ and $\psi: Y\times Z \rightarrow \R$ are approximate isometries, then $\psi \phi: X\times Z \rightarrow \R$ with $\psi\phi(x,z)= \inf_{y\in Y}( \phi(x,y)+ \psi(y,z))$ is also an approximate isometry by \cite[Lemma 2.3(i)]{ben15}. For example, if  $f:A_0 \rightarrow C$ and $g:B_0 \rightarrow C$ generate embeddings from $<A_0>$ and $<B_0>$ to $C$ respectively, then the map $\psi_{g}^* \psi_{f}: <A_0 > \times <B_0> \rightarrow \R$, where
\[  \psi_g^* \psi_f(x,y) = \inf_{z\in C}( \psi_f(x,z)+ \psi_g(y,z)). \]

is also an approximate isometry. Note that if $A_0 = \overline{a}$, $B_0 = \overline{b}$, and  $d(f(\overline{a}), g(\overline{b}) )$ is small, then $\psi_g^* \psi_f(a_i, b_i)$ will also be small, and the converse holds true as well.  In fact, for $x \in A_0$ and $y\in B_0$, we have $\psi_{g}^*\psi_{f}(x, y) = \| f(x) - g(y) \|$ (here $A_0$ and $B_0$ need not be finite).  Thus we can see approximate isometries as marking conditions for the "strength" of a joint embedding.  An approximate isometry $\psi$ may originally be defined on some $X_0 \times Y_0$, with $X_0 \subseteq X$ and $Y_0 \subseteq Y$, but it can be extended to $X\times Y$ by the composition $\psi_{Id_{Y_0}} \psi \psi^*_{Id_{X_0}}$.  However, if the ambient spaces are clear from context, we will just write $\psi$ to refer to the extended approximate isometry.   Finally, composition and involution as described above work analogously together like the multiplication and inversion group operations.  In particular, composition is associative and $(\psi \phi)^* = \phi^* \psi^*$ (see \cite[Lemma 2.3(ii)]{ben15}).\\

We say that  $\psi$ is \textit{refined by}, or \textit{coarsens} $\phi$ if $\phi(x,y) \leq \psi(x,y)$ for all $(x,y) \in X\times Y$.  Given lattices $X$ and $Y$, we let $\mathcal {A}px(X,Y) \subseteq [0,\infty]^{X\times Y}$, equipped with the product topology on $[0,\infty]^{X\times Y}$, be the set of all approximate isometries  generated by finite partial embeddings between elements in $\mathcal K'$, composition, coarsening, and any point-wise limit of such maps.  For $\psi \in \mathcal Apx(X,Y)$, we let $\mathcal Apx^{<\psi}(X,Y)$ be the interior of the set of refinements of $\psi$.    If $\mathcal Apx^{<\psi}(X,Y) \neq \emptyset$, we say that $\psi$ is a \textit{strictly approximate isometry} and use the notation $\phi < \psi$ to mean $\phi \in \mathcal Apx^{<\psi}(X,Y)$.  Intuitively, strictly approximate isometries do not impose strong conditions on possible joint embeddings except on some finite set (see Lemma \cite[Lemma 3.8(ii) ]{ben06}), so they leave much room for refinement. While the set $\mathcal Apx(X,Y)$ seems complicated, by \cite[Lemma 3.8(iv)]{ben15}, it actually is comprised of the closure of coarsening and pointwise limits of approximate isometries in the form of $\psi^*_{g} \psi_{f}$ (extended to $X\times Y$) where $f$ and $g$ are finite partial embeddings.\\

 Suppose $\psi:X\times Y \rightarrow \R$ is an approximate isometry, and let $r > 0$.   We say that $\psi$ is $r$-total if $\psi^*\psi \leq \psi_{Id_X} +2r$.  It is not hard to show that if $f:X\rightarrow Y$ is an embedding, then $\psi_f^* \psi_f = \psi_{Id_X}$, so any such $\psi_f$ is $r$-total for all $r > 0$. \\

Approximate isometries can also be used to characterize the AP: for every $A,B\in \mathcal K'$, $<\overline{a}> \subseteq A$ and embedding $f:<\overline{a}> \rightarrow B$, there exist $C\in \mathcal K'$ and embeddings $g:A\rightarrow C$ and $h:B\rightarrow C$ such that $\psi_h^*\psi_g \leq \psi_f$ (with the necessary extensions on $f$).   See \cite[Definition 3.5(iii)]{ben06} for the generalized definition of the NAP using approximate isometries. Using the fact that $\psi_h^*\psi_g(x,y) = \| g(x) - h(y) \|$ and $\psi_f(x,y) = \| f(x) - y\|$, one can easily show that this definition is equivalent to our current working definition of the AP. Note also the inequality; this is due to the fact that $f$ is only defined on $<\overline{a}>$ while $g$ is defined on all of $A$, so the extension of $\psi_f$ to $A\times B$ contains less limiting information than  $ \psi_h^*\psi_g(x,y)$.  \\

 We are now ready to prove the following:
\begin{theorem}\label{t:finitely-branchable-fraisse}
	$\mathfrak{ BL}$ can be constructed as the limit of an increasing sequence of $\ell_\infty^m(\ell_1^n)$ lattices.  In particular, it is finitely branchable.
\end{theorem}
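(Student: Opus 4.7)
The plan is to leverage Lemma \ref{l:incomplete-fraisse}: since $\mathcal{K}'$ is an incomplete Fra\"iss\'e class dense in $\mathcal{K}$ under the Fra\"iss\'e metric and has the AP by Theorem \ref{t:fd-approx-amalg}, and since (by Corollary \ref{c:4-equvalences} together with Lemma \ref{l:fd-into-1infty-l1}) every lattice in $\mathcal{K}'$ embeds isometrically into some $\ell_\infty^m(\ell_1^n)$-lattice, one can run the standard Fra\"iss\'e-sequence construction entirely within $\ell_\infty^m(\ell_1^n)$-lattices. The output will be an increasing chain $(L_n)_n$ of such lattices with isometric inclusion embeddings whose inductive limit is isometric to $\mathfrak{BL}$ by the uniqueness of Fra\"iss\'e limits.

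Concretely, I would fix a countable $d^{\mathcal{K}'}$-dense enumeration of extension tasks $(A_i \hookrightarrow B_i)_{i \in \N}$ with $A_i, B_i \in \mathcal{K}'$ (possible by the separability of $d^{\mathcal{K}'}$) and build the chain inductively, starting from $L_1 = \ell_\infty^1(\ell_1^1) = \R$. At stage $n$, for each task with index $i \leq n$ and each embedding $A_i \hookrightarrow L_n$ drawn from a finite $2^{-n}$-net of such embeddings, amalgamate $L_n$ and $B_i$ over $A_i$ via Theorem \ref{t:fd-approx-amalg}, then embed the resulting amalgam isometrically into a larger $\ell_\infty^{m'}(\ell_1^{M'})$-lattice using Lemma \ref{l:fd-into-1infty-l1}. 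Iterating over the finitely many tasks handled at this stage produces $L_{n+1}$. The resulting sequence is a Fra\"iss\'e sequence for $\mathcal{K}'$ with tolerance $2^{-n}$ at stage $n$, and by the density of $\mathcal{K}'$ in $\mathcal{K}$ under $d^{\mathcal{K}}$ the approximate ultra-homogeneity and universality properties carry over to $\mathcal{K}$, forcing $\overline{\bigcup_n L_n} \cong \mathfrak{BL}$.

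Finite branchability of $\mathfrak{BL}$ then follows immediately from the remark in Section \ref{s:preliminaries} that any lattice arising as the inductive limit of finite dimensional lattices is finitely branchable; explicitly, one can populate level $n$ of the tree with the atoms of $L_n$ and decompose each atom $u$ of $L_n$ inside $L_{n+1}$ via its unique expansion $u = \sum_i c_i v_i$ as a positive combination of atoms $v_i$ of $L_{n+1}$, which are automatically disjoint and sum to $u$. I anticipate the main obstacle to be the inductive bookkeeping in the construction of the chain: one must verify that successive amalgamations and re-embeddings into $\ell_\infty^m(\ell_1^n)$-lattices remain fully commutative so that the resulting tower of inclusions $L_n \hookrightarrow L_{n+1}$ is genuine, and one must confirm that extension tasks drawn from the $d^{\mathcal{K}'}$-dense enumeration suffice to force approximate ultra-homogeneity not merely for $\mathcal{K}'$ but for all of $\mathcal{K}$. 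Both points are standard in Fra\"iss\'e theory but deserve careful verification.
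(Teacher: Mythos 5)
Your proposal follows essentially the same route as the paper: build a Fra\"iss\'e sequence entirely inside $\mathcal K'$ by iterated amalgamation (Theorem \ref{t:fd-approx-amalg}) followed by isometric re-embedding into $\ell_\infty^m(\ell_1^n)$ lattices (Lemma \ref{l:fd-into-1infty-l1}), then identify the inductive limit with $\mathfrak{BL}$ via the density of $\mathcal K'$ in $\mathcal K$ (Lemma \ref{l:incomplete-fraisse}) and uniqueness of Fra\"iss\'e limits. The only difference is bookkeeping: where you enumerate extension tasks and take finite nets of embeddings, the paper certifies the limit through Ben Yaacov's approximate-isometry machinery (his Lemmas 3.8 and 3.16), which is precisely the formalization of the ``careful verification'' you flag at the end.
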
 

\begin{proof}
We construct an increasing sequence of finite dimensional lattices $\mathcal A_n$ as in the proof of Lemma 3.17 in \cite{ben15} with $\mathfrak{ BL}$ isometric to $ \overline{\bigcup_n \mathcal A_n}$.  Let $\mathcal A_1 \in \mathcal K'$, and let $K_{n,0}$ be a countable dense subset of $\mathcal K'_n$.  Since $\mathcal K'$ is dense in $\mathcal K$, we have $K_{n,0}$  dense in $\mathcal K_n$. We proceed by induction. Suppose $\mathcal A_k$ has been defined for all $k\leq n$.  Suppose also that $A_{k,0} \subseteq \mathcal A_k$ is countable and dense in $\mathcal A_k$ for all $k\leq n$, with $A_{k,0}\subseteq A_{k+1,0}$ for each $k < n$.    \\

By \cite[Lemma 3.8(ii)]{ben15}, for any finite tuples $\overline{a}$ and  $\overline{b}$ we can ensure the existence of a countable set $C(\overline{a},\overline{b}) \subseteq \mathcal K'$  such that every $C\in C(\overline{a},\overline{b})$ contains an isometric copy of $<\overline{a}>$ and $<\overline{b}>$, and every strictly approximate isometry $\psi: \overline{b} \times \overline{a} \rightarrow \Q$  can be refined in $<\overline{b}> \times <\overline{a} >$ by some $\psi_{f}^*\psi_{g}$ with $f:<\overline{a} > \rightarrow C$ and $g:<\overline{b}> \rightarrow C$ for some $C \in C(\overline{a},\overline{b})$ (by \cite[Lemma 2.8(ii)]{{ben15}}, such strictly approximate isometries $\psi: \overline{b} \times \overline{a} \rightarrow \Q$ actually exist).  Let $$C_k = \bigcup_{\substack{ \overline{b} \in K_{n,0} \\ \overline{a} \subseteq A_{k,0} }} C(\overline{a} , \overline{b}).$$ 

To construct $\mathcal A_{n+1}$, take the first $n$ lattices $C_k^1,..., C_k^{n}$ in each $C_k$ for $k \leq n$, and amalgamate them one after another.  Here $ \overline{a_{i,j}} \subseteq A_{i,0}$ for some $i\leq n$,  and $<\overline{a_{i,j}}>$ and $<\overline{b_{i,j}} >$ both into $C_{i}^j \in C(\overline{a_{i,j}},\overline{b_{i,j}})  \subseteq C_i: $  
\begin{center}
	
\begin{tikzcd}[scale cd=0.8]
\mathcal A_n \arrow{rr} &  & * \dots \quad * \arrow{rr}{\iota'} & & * \quad  \dots \quad * \arrow{r} & \mathcal A_{n+1} \\
 <\overline{a_{1,1}}>\arrow{u}  \arrow{r} & C_1^1 \arrow{ru} &  <\overline{a_{i,j}}>\arrow{u} \arrow{r}{f} & C_i^j\arrow{ur}{h} &  \quad \dots \quad <\overline{a_{{n,n}}}> \arrow{u} \arrow{r} & C_{n}^n \arrow{u} \\
 <\overline{b_{1,1}}> \arrow{ru} & & <\overline{b_{i,j}}> \arrow{ru}{\iota} & &   <\overline{b_{n,n}}> \arrow{ru} &
\end{tikzcd}
\end{center}

  Note that in each case, $\mathcal A_k \in \mathcal K'$, so we can if necessary enlarge $\mathcal A_k$ and assume that $\mathcal A_k = \ell_\infty^{m_k}(\ell_1^{n_k})$ for some $m_k, n_k \in \N$.\\
  
  Observe that for each tuple $\overline{a} \subseteq \mathcal A_k$, $\overline{b} \in K_{k,0}$, and each strictly approximate isometry $\psi:\overline{b} \times \overline{a} \rightarrow \Q$, we have some $m > k$ such that $\overline{a} = \overline{a_{i,j}} $,  $\overline{b} = \overline{b_{i,j}} $ for some $i, j \leq m$ with $C_i^j \in C(\overline{a},\overline{b})$ and embeddings $f:<\overline{a_{i,j}} > \rightarrow C_i^j$ and $\iota:<\overline{b_{i,j}}> \rightarrow C_i^j$ such that $\psi^*_f \psi_\iota < \psi$. Additionally, there is an embedding $h:C_i^j \rightarrow \mathcal A_{m}$ with $ \psi_h^* \psi_{\iota'} \leq \psi_f$. In particular, we have 
  $$\psi_{h\iota}|_{<\overline{b_{i,j}}>\times <\overline{a_{i,j}}>} = \psi_{f}^* \psi_\iota:$$
  Indeed, given $x \in <\overline{a_{i,j}}>$ and $y \in <\overline{b_{i,j}}>$,
  \begin{align*}
  \psi_{h\iota}(y,x) & = \| h\iota(y) - x \|  = \| h\iota(y) - \iota'(x) \| \\
  					& = \| h\iota(y) - hf(x) \| = \| \iota(y) - f(x) \| = \psi_f^* \psi_\iota.		
  \end{align*}
  Thus $\psi_{h\iota} \leq \psi_f^* \psi_\iota	 < \psi$.  Furthermore, $\psi_{h\iota}$ is $r$-total on $\overline{b_{i,j}}$ for all $r > 0$, since $h\iota$ is an embedding.  Thus by \cite[Lemma 3.16]{ben15}, $\overline{\bigcup_n \mathcal A_n}$ is a Fra\"iss\'e limit for $\overline{\mathcal K'}$-structures, where $\overline{\mathcal K'}$ is the Fra\"iss\'e completion of $\mathcal K'$.   By Theorem \ref{l:incomplete-fraisse}, we have $\overline{\mathcal K'} = \mathcal K$. This implies that $\overline{\bigcup_n \mathcal A_n}$ is also a Fra\"iss\'e limit of $\mathcal K$, so by uniqueness, $\overline{\bigcup_n \mathcal A_n}$ is isometric to $\mathfrak{BL}$. \end{proof}

A lattice that is finitely branchable can be expressed as an inductive limit of finite dimensional lattices.  We also have the following: 

\begin{theorem}\label{t:fb-implies-fg}
	Any finitely branchable lattice can be generated by two elements.  
\end{theorem}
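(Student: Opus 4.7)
The plan is to construct two elements $u, v \in E$ such that every generator $x_\sigma$ already lies in the algebraic vector sublattice $\mathcal{L}_0$ of $E$ generated by $\{u, v\}$. Since $\mathrm{span}\{x_\sigma : \sigma \in T\}$ is dense in $E$ by hypothesis, the closed sublattice $\overline{\mathcal{L}_0}$ will then equal $E$, yielding the theorem.

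Set $u := x_\emptyset$. For $v$, fix a breadth-first enumeration $\emptyset = \rho_1, \rho_2, \ldots$ of $T$ and a base $B \geq 4$; put $c_\emptyset := 0$ and $c_{\rho_i} := B^{-i}$ for $i \geq 2$, and define
\[
v \;:=\; \sum_{\sigma \in T} c_\sigma\, x_\sigma,
\]
which converges absolutely because $\|x_\sigma\| \leq \|x_\emptyset\|$ and $\sum_i B^{-i} < \infty$. For any fixed level $n$, re-indexing the defining series yields a disjoint-band decomposition
\[
v - \beta u \;=\; \sum_{|\sigma| = n} w_\sigma,
\qquad
w_\sigma \;:=\; (\alpha_\sigma - \beta)\, x_\sigma + \sum_{\rho \succ \sigma} c_\rho\, x_\rho,
\]
with $\alpha_\sigma := \sum_{\rho \preceq \sigma} c_\rho$; each $w_\sigma$ lies in the band generated by $x_\sigma$, and these bands are mutually disjoint since level-$n$ siblings are disjoint.

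The heart of the argument is a $B$-adic estimate carried out at each level $n$. By uniqueness of finite $B$-adic expansions applied to the BFS indices of the ancestor set of each $\sigma$, the values $\{\alpha_\sigma : |\sigma| = n\}$ are all distinct; and with $B \geq 4$ their minimum pairwise separation at level $n$ strictly dominates (by at least a factor of $2$) the scalar $T_n$ satisfying $\sum_{\rho \succ \sigma} c_\rho\, x_\rho \leq T_n\, x_\sigma$ in lattice order. Consequently, for each $i$ one can choose a threshold $\beta_i$ in the gap between the $i$-th and $(i{+}1)$-st sorted $\alpha$-values together with $\varepsilon := T_n$, so that every $\sigma$ at level $n$ falls into one of two clean regimes: either $\alpha_\sigma \geq \beta_i + \varepsilon$, forcing $w_\sigma \geq \varepsilon x_\sigma$ and hence $w_\sigma^+ \wedge \varepsilon x_\sigma = \varepsilon x_\sigma$; or $\beta_i > \alpha_\sigma + T_n$, forcing $w_\sigma \leq 0$ and hence $w_\sigma^+ = 0$. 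Because the bands $B_{x_\sigma}$ are pairwise disjoint, the lattice operations split across them, giving the exact identity
\[
\tfrac{1}{\varepsilon}\bigl((v - \beta_i u)^+ \wedge \varepsilon u\bigr) \;=\; \sum_{\sigma :\, \alpha_\sigma > \beta_i} x_\sigma
\]
as an element of $\mathcal{L}_0$. Taking consecutive differences of these elements as $i$ ranges over the $M_n := |\{\sigma : |\sigma| = n\}|$ gaps then isolates every single $x_\sigma$ with $|\sigma| = n$; since $n$ was arbitrary, $\mathcal{L}_0$ contains every generator.

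The only nontrivial technical step is verifying the separation-over-tail inequality uniformly in $n$; this is where the breadth-first enumeration and the choice $B \geq 4$ enter essentially. Everything else is pure lattice algebra, with no approximation step inside the per-level construction, and it is precisely this exactness that lets the argument bypass the compounding-error problem a naive level-by-level inductive scheme would otherwise encounter.
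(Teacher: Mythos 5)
Your proof is correct, and it takes a genuinely different route from the paper's. The paper also sets $u = x_\emptyset$ and uses distinct positive coefficients on the level-$n$ atoms so that $\langle u, v_n\rangle = \mathrm{span}(S_n)$, but it then defines $v$ as a \emph{limit} of the $v_n$'s and must control the perturbation $v_n \to v$: it fixes, for each level, a finite family of lattice terms $\phi_n^i(x,y)$ forming a $2^{-n}$-net in $\mathbf B(\mathrm{span}(S_n))$ when evaluated at $(u,v_n)$, and chooses $\|v_n - v_{n+1}\|$ small relative to the moduli of uniform continuity of all previously chosen terms, so that $\{\phi_n^i(u,v)\}$ is still dense in $X$. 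Your construction replaces this approximation scheme with a single closed-form $v = \sum_\sigma c_\sigma x_\sigma$ whose $B$-adic coefficients make every $\alpha_\sigma$ at every level simultaneously well-separated relative to the tail bound $T_n$, so that each generator $x_\sigma$ is recovered \emph{exactly} as a lattice polynomial in $u$ and $v$ via $\frac{1}{\varepsilon}\bigl((v-\beta u)^+ \wedge \varepsilon u\bigr)$ and consecutive differences; density of $\mathrm{span}\{x_\sigma\}$ then finishes the proof with no continuity-modulus bookkeeping. The key estimates check out: the tail satisfies $\sum_{\rho\succ\sigma}c_\rho x_\rho \le T_n x_\sigma$ with $T_n \le B^{-I_n}/(B-1)$ (where $I_n$ is the largest BFS index at level $n$, since all deeper indices exceed $I_n$ and the per-level maxima are distinct powers), while two distinct level-$n$ ancestor sets first differ at some index $i_0 \le I_n$, giving separation at least $B^{-i_0}(B-2)/(B-1) \ge (B-2)T_n$; the disjointness of the level-$n$ bands makes $(\cdot)^+$ and $\wedge$ split across the $w_\sigma$'s as you claim. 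What your approach buys is exactness and the elimination of the compounding-error issue you mention; what the paper's approach buys is that it only ever manipulates finitely many tree levels at a time and does not require a global coefficient scheme. One cosmetic caution: with $B=4$ the separation-to-tail ratio bound is exactly $2$ rather than strictly greater, so either take $B \ge 5$ or (as your regime analysis already permits) shave the threshold margin to $\varepsilon = T_n - \delta$; this does not affect the validity of the argument.
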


\begin{proof}
	
	Let $X$ be finitely branchable, and let $(x_\sigma)_{\sigma\in T}$ be a finitely branching tree densely spanning $X$.  Recall that as a tree, $T \subseteq \bigcup_M \prod_n^M A_n$, where each $A_n$ is a finite nonempty set. We will find two elements $u$ and $v$ such that $X = <u,v>$.  Let $u = x_{\emptyset}$ and let $S_n = \{ x_\sigma: |\sigma| = n \}$ as in the proof of Lemma \ref{l:incomplete-fraisse}.  Consider now $X_1 = \text{span}( S_1)$, and let $$v_1 = \sum_{|\sigma| = 1} a_\sigma x_\sigma, $$ where $0 < a_\sigma$ and the $a_\sigma$'s are mutually distinct. The mutual distinction enables each $x_\sigma$ to be produced using lattice operations over $u$ and $v_1$.  For example, take $a_{\rho} = \max a_\sigma$, and pick $c$ such that $c\alpha_\rho > 1$ but for all $\tau \neq \rho$, $ca_\tau < 1$.  Recall that $u = x_\emptyset = \sum_{|\sigma| = 1} x_\sigma$, so $(cv_1 - u) \vee 0 = (c\alpha_\rho - 1) x_\rho$.  We then make the same argument, but for $u - x_\rho$ and $v_1 - a_\rho x_\rho$, thus generating each  successive $x_\sigma$ for all $\sigma \in S_1$.\\
	
	 Suppose that for all $k\leq n$, $v_k$ has been selected and that for each $k$ we have a finite sequence of functions $(\phi_k^i(x,y))_i$ generated by lattice operations $+, \wedge, r\cdot$ (where $r$ is real), with corresponding moduli of continuity $\Delta_k^i:\R^+ \rightarrow ( 0,1]$.  Suppose we also have: %(we can assume each $\Delta_k^i$ is bounded by $1$)
	\begin{itemize}
		\item For each $k \leq n$, $v_k = \sum_{|\sigma| = k} a_\sigma x_\sigma$, with $a_\sigma > 0$ and mutually distinct.
		\item For each $k \leq n$, $<u,v_k> = \text{span}(S_k)$
		\item For each $k\leq n$, $(\phi_k^i(u, v_k))_i$ is a $2^{-k}$-net in the unit ball of $\text{ span}(S_k)$.
		\item For each $k< n$, $$ \| v_k - v_{k+1} \| < \frac{\min_{i,j\leq k}( \ (\Delta_j^i  (2^{-k}) )_i \ )}{2^{k}}$$

	\end{itemize}
	Note that  $$v_k = \sum_{|\sigma| = k} a_\sigma \sum_{m \in A_{k+1}} x_{\sigma ^\frown m},$$

	so for each $m\in A_{n+1}$ and $\sigma \in \prod_1^n A_k $, pick positive, mutually distinct $a_{\sigma^\frown m}$ such that 
	 $$ |a_\sigma - a_{\sigma^\frown m} | < \frac{\min_{i,j\leq n}( \ (\Delta_j^i  (2^{-n}) )_i \ )}{2^{n} |S_{n+1}|}. $$
	 Thus if $v_{n+1} :=  \sum_{|\sigma| = n+1} a_\sigma x_\sigma$, we have $\| v_n - v_{n+1} \| < \frac{\min_{i,j\leq n}( \ ( \Delta_j^i (2^{-n}))_i \ )}{2^{n}}$.  Now $<u,v_{n+1}> = \text{ span} (S_{n+1})$.  For each  $\sigma $ with $|\sigma| = n+1$, pick $0<s < a_\sigma < r$ that for all $\tau \neq \sigma$ with $\tau \in \prod_1^{n+1}A_k$, either $\tau > r $ or $\tau < s$. Let $x = (v_{n+1} - su)_+$ and $y = (v_{n+1} - ru)_+$.  Then for some large enough $C$, $(Cy - x)_+$ is a multiple of $x_\sigma$. Finally, let $(\phi_{n+1}^i(x,y))_i$ be a finite collection of functions generated by lattice operations such that $(\phi_{n+1}^i(u,v_{n+1}) )_i$ is a $2^{-n-1}$-net in the unit ball of $\text{ span}(S_{n+1})$.  Let $v = \lim v_n$.\\
	
	We show that $<u,v> = X$. Observe that the set $\{ \phi_n^i(u, v_n): \ n\in \N \}$ is dense in $X$ by the above properties. Let $\varepsilon > 0$, and pick $n$ such that $2^{-n} < \varepsilon$. Then  $\|v_n - v_{n+1} \| < \frac{\Delta_n^i  (\varepsilon)  }{2^{n+1}}$ and $\phi_n^i(u,v_n)$ is $\varepsilon$-dense in $\mathbf B(\text{span}(S_n))$.  Furthermore, for all $m > n$, we have $\| v_n - v_m \| <  \min_i \Delta_n^i  (\varepsilon) \sum_{j = n+1}^{m} 2^{-j}$. Thus  $\| v_n - v \| \leq \Delta_n^i  (\varepsilon)$, so $\| \phi_n^i(u,v_n) - \phi_n^i(u,v) \| < \varepsilon$ for all $\phi_n^i$.  This implies that the set $\{ \phi_n^i(u,v) | \ n\in \N \}$ is dense in $X$, so we are done. \end{proof}

	%First assume that $X$ is atomless.  Let $X$ be finitely branchable, and let $(x_\sigma)_{\sigma \in T}$ be a linearly dense spanning tree, with $x_\emptyset= u$ a weak unit.  Then there exists an injective contractive lattice homomorphism $\phi$ from a $C(K)$ space onto $\cup_n [-nu, nu]$, which is dense in $X$, with $\mathbf{1} \mapsto u$.\\

%For all $n\in N$, $ u= \sum_{|\sigma| = n} x_\sigma$.  Then each $x_\sigma = \phi(\mathbf{1}_{K_\sigma})$ for some clopen $K_\sigma \subseteq K$.  Let $T' \subseteq T$ be the pruned subtree consisting of all $\sigma \in T$ such that $x_\sigma \neq 0$. Then the set $[T']$ of infinite branches in $T'$ is itself a compact Polish perfect zero-dimensional space, so by Theorem 7.4 in \cite{kech95}, $[T']$ is homeomorphic to $\Delta$, and there exists a continuous open surjection from $K$ to $\Delta$ mapping clopen sets to clopen sets.  This induces an isometric embedding $C(\Delta)) \hookrightarrow C(K)$, mapping indicator functions to indicator functions such that $\text{span}((x_\sigma)_{\sigma\in T}) \subseteq \phi(C(\Delta)) $.  Let $e_1 = u$ and $e_2 = \phi(f)$, where $f$ is the coordinate function on $\Delta$.  Now $VL(1_\Delta, f)$ is dense in $C(\Delta)$ so $VL(u, \phi(f))$ is dense in $\cup_n [-nu, nu]$, and thus is dense in $X$. \\

Theorem \ref{t:finitely-branchable-fraisse} combined with Theorem \ref{t:fb-implies-fg} yields a surprising result:
\begin{corollary}
 The lattice $\mathfrak{ BL}$ is finitely generated.
\end{corollary}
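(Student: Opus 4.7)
The plan is to chain the two preceding theorems together; no further work is needed beyond recognizing that the hypothesis of one exactly matches the conclusion of the other. By Theorem \ref{t:finitely-branchable-fraisse}, the lattice $\mathfrak{BL}$ can be realized as the closure of an increasing union $\overline{\bigcup_n \mathcal{A}_n}$ of lattices $\mathcal{A}_n \in \mathcal{K}'$, and in particular it admits a finitely branching tree $(x_\sigma)_{\sigma \in T}$ densely spanning it. This is precisely the definition of a finitely branchable lattice as introduced in Section \ref{s:preliminaries}.

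Once finite branchability is in hand, I would simply invoke Theorem \ref{t:fb-implies-fg}, which asserts that any finitely branchable Banach lattice is generated (as a Banach lattice) by two elements. Applied to $\mathfrak{BL}$, this yields the existence of $u, v \in \mathfrak{BL}$ with $\mathfrak{BL} = \langle u, v \rangle$, so in particular $\mathfrak{BL}$ is finitely generated. There is essentially no obstacle: the entire content of the corollary is the observation that finite branchability, established for $\mathfrak{BL}$ via the Fra\"iss\'e construction over $\mathcal{K}'$, is strong enough to force generation by finitely many (in fact two) elements. This contrasts with the Gurarij space, whose construction as a limit of $\ell_\infty^n$ spaces does not pass through an analogous two-generator statement, and is the reason this result is highlighted as surprising.
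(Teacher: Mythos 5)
Your proposal is correct and is exactly the paper's argument: the corollary is obtained by combining Theorem \ref{t:finitely-branchable-fraisse} (finite branchability of $\mathfrak{BL}$) with Theorem \ref{t:fb-implies-fg} (finitely branchable implies two-generated), which is precisely the chain you describe.
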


\begin{remark}
	 Finite generation implies that $\mathfrak{ BL}$ is not stably homogeneous in the sense defined by Lupini in \cite{lup16}. That is, given finitely generated $A$ and embeddings $f:A \rightarrow \mathfrak{ BL}$ and $g:A \rightarrow \mathfrak{ BL}$, we cannot guarantee that for all $\varepsilon > 0$, there is some automorphism $\phi$ on $\mathfrak {BL}$ such that $\| \phi \circ f - g \| < \varepsilon$.  Thus approximating over a finite number of elements rather than by norms is the best, in some sense, that can be done in terms of homogeneity. \\
	 
	 Suppose otherwise.  Since $\mathfrak{ BL} $ can be generated by two elements $x_1, x_2$, we consider $e \in \mathbf S(\mathfrak{ BL})_+$ and embedding $f: <x_1,x_2> \rightarrow \mathfrak{ BL}$ such that the image does not have full support and $f(e)$ is disjoint from $e$ (we can do this, for example, by finding a copy of $\mathfrak{ BL} \oplus_\infty \R$  that is in $\mathfrak{BL},$ and pick $e\in \mathbf S(\R)_+)$.  If $\mathfrak{ BL}$ were stably homogeneous, there would exist a lattice automorphism $\phi$ on $\mathfrak {BL}$ such that $  \| f - \phi \| < \frac{1}{2}$.  Then $\|f (\phi^{-1}(e)) - e \| = \|(f -\phi)(\phi^{-1}(e)) \|< 1/2$, but $f(\phi^{-1}(e)) $ is disjoint from $e$, which means $\|f (\phi^{-1}(e)) - e \| \geq 1$, a contradiction.
\end{remark}

\section{An alternate construction of $\mathfrak{ BL}$ and some of its properties } \label{s:sep-univ-dist}
  The Fra\"iss\'e limit $\mathfrak{ BL}$ is clearly of approximately universal disposition both for finite dimensional lattices and for finitely generated lattices.  In this section, we show that separable lattices of approximately universal disposition for finitely generated lattices are isometric to $\mathfrak{BL}$.  In addition, finitely branchable lattices of approximately universal disposition for finite dimensional lattices are isometric to $\mathfrak{BL}$. A bi-product of the latter is a simplified construction of $\mathfrak{ BL}$ which allows us to explore some of its structural properties.\\

We first show that approximate universal disposition can be broadened to include extensions of $\varepsilon$-isometries:

\begin{lemma}\label{l:disposition-to-fraisse}
	Suppose $X$ is of approximately universal disposition for finitely generated Banach lattices, let $< \overline{a}> = A\subseteq X$ and $B$ be finitely generated, and let $f:A \rightarrow B$ be a $(1+\varepsilon')$-embedding.  Then for all $\varepsilon > \varepsilon'$ and for all $\delta > 0$, there exists a $(1+\delta)$-embedding $g:B\rightarrow X$ such that $\| g\circ f( \overline{a}) -  \overline{a} \| < \varepsilon$.  
\end{lemma}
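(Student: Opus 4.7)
The plan is to reduce the $(1+\varepsilon')$-embedding setup to the isometric setup required by the definition of approximate universal disposition. The obstruction is that the definition demands an isometric embedding out of $A$, whereas $f$ has distortion $1+\varepsilon'$; Theorem \ref{t:two-embeddings-one-isomorphism} supplies precisely this conversion.

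First, apply Theorem \ref{t:two-embeddings-one-isomorphism} to $f:A\to B$ (both finitely generated): this produces a finitely generated lattice $Z\in\mathcal{K}$ together with isometric lattice embeddings $j_1:A\to Z$ and $j_2:B\to Z$ satisfying $\|j_1-j_2\circ f\|\leq\varepsilon'$, whence $\|j_1(a_i)-j_2(f(a_i))\|\leq\varepsilon'\|a_i\|$ for each generator $a_i$. Next, fix a small $\eta>0$ (to be chosen later) and apply the approximate universal disposition of $X$ to the isometric embedding $j_1:A\to Z$ together with the inclusion $\iota:A\hookrightarrow X$. This produces a $(1+\delta)$-embedding $h:Z\to X$ with $\|h\circ j_1(\overline{a})-\overline{a}\|<\eta$. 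Set $g:=h\circ j_2:B\to X$; because $j_2$ is isometric and $h$ is a $(1+\delta)$-embedding, $g$ is itself a $(1+\delta)$-embedding. The triangle inequality then gives
\[
\|g(f(a_i))-a_i\|\leq\|h(j_2 f(a_i))-h(j_1(a_i))\|+\|h(j_1(a_i))-a_i\|\leq(1+\delta)\varepsilon'\|a_i\|+\eta
\]
for each $i$.

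The only real subtlety is the parameter bookkeeping. Taking the maximum over $i$, one needs $(1+\delta)\varepsilon'\max_i\|a_i\|+\eta<\varepsilon$; since $\varepsilon>\varepsilon'$ and the generating tuple is normalized to the unit ball (as is standard in this setup), one can first replace $\delta$ by a possibly smaller $\delta'\leq\delta$ with $(1+\delta')\varepsilon'<\varepsilon$, then choose $\eta$ so that the strict inequality survives. No other obstacle arises: the entire argument is the single ``straighten $f$ to an isometric wedge, then invoke universal disposition'' maneuver that Theorem \ref{t:two-embeddings-one-isomorphism} was designed to enable.
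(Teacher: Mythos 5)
Your proposal is correct and follows essentially the same route as the paper's own proof: straighten $f$ via Theorem \ref{t:two-embeddings-one-isomorphism} to obtain isometric embeddings of $A$ and $B$ into a common finitely generated $Z$, apply approximate universal disposition to the resulting embedding $A\to Z$, and compose, with the same triangle-inequality bookkeeping at the end. The only cosmetic difference is that you track the embedding distortion and the closeness parameter separately ($\delta$ versus $\eta$), whereas the paper uses a single $\delta$ for both.
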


\begin{proof}
	By Theorem \ref{t:two-embeddings-one-isomorphism} there is a lattice $Z$ and embeddings $h_1:A \rightarrow Z$ and $h_2: B\rightarrow Z$ such that $\|h_2 \circ f - h_1\| \leq \varepsilon'$. We can assume that $Z$ is finitely generated as well, since we can embed into $\mathcal U$ if necessary.  Decreasing $\delta$ as necessary, we can suppose that $(1+\delta)\varepsilon' + \delta < \varepsilon$.  Then there exists a $(1+\delta)$-embedding $g':Z\rightarrow X$ such that $\| g' h_1 ( \overline{a}) -  \overline{a} \| < \delta$. Then 
	$$\|g' h_2f( \overline{a}) -  \overline{a} \| \leq \|g'\| \|h_2( \overline{a}) - h_1 ( \overline{a}) \| + \|g'h_1 ( \overline{a}) -  \overline{a} \| < (1+\delta)\varepsilon'+ \delta <\varepsilon.$$
	Let $g = g' \circ h_2$, and we are done. \end{proof}

We can now show the following: 

\begin{theorem}\label{t:1e-isometric-unique}
	Any separable Banach lattice of approximately universal disposition for finitely generated lattices is isometric to $\mathfrak{ BL}$.
\end{theorem}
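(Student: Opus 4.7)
The plan is a back-and-forth construction of mutually inverse lattice isometries $\phi : X \to \mathfrak{BL}$ and $\psi : \mathfrak{BL} \to X$. Observe first that $\mathfrak{BL}$ itself is of approximately universal disposition for finitely generated lattices: given a finitely generated $A$ and embeddings $f : A \to \mathfrak{BL}$, $g : A \to B$ with $B$ finitely generated, use the universality of $\mathfrak{BL}$ to fix any isometric embedding $j : B \to \mathfrak{BL}$, and then apply the approximate ultra-homogeneity of Theorem \ref{t:lattices-are-fraisse} to obtain an automorphism sending $j \circ g(\overline{a})$ to within $\varepsilon$ of $f(\overline{a})$; its composition with $j$ is the required isometric (in particular $(1+\varepsilon)$-) embedding $B \to \mathfrak{BL}$. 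Thus both $X$ and $\mathfrak{BL}$ satisfy the hypothesis, and the key analytic tool available on each side is Lemma \ref{l:disposition-to-fraisse}.

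Fix dense sequences $(x_n)_n \subseteq X$ and $(y_n)_n \subseteq \mathfrak{BL}$, and a summable sequence of tolerances $\varepsilon_n \searrow 0$. I would build inductively finitely generated sublattices $A_n = \langle \overline{a}_n \rangle \subseteq X$ and $B_n = \langle \overline{b}_n \rangle \subseteq \mathfrak{BL}$ together with $(1+\varepsilon_n)$-embeddings $\phi_n : A_n \to \mathfrak{BL}$ such that $\phi_n(\overline{a}_n) = \overline{b}_n$, arranging two invariants: (i) $\bigcup_n A_n$ is dense in $X$ and $\bigcup_n B_n$ is dense in $\mathfrak{BL}$, achieved by alternately forcing $x_{n+1}$ into $A_{n+1}$ and $y_{n+1}$ into $B_{n+1}$; and (ii) $\phi_{n+1}$ agrees with $\phi_n$ on $\overline{a}_n$ up to error at most $\varepsilon_n + \varepsilon_{n+1}$. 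To pass from stage $n$ to stage $n+1$ by incorporating $x_{n+1}$, enlarge $A_n$ to $A_{n+1} = \langle A_n \cup \{x_{n+1}\}\rangle$ and apply Lemma \ref{l:disposition-to-fraisse} to $\mathfrak{BL}$, with $A = B_n$, $B = A_{n+1}$, and $f = \phi_n^{-1} : B_n \to A_{n+1}$ the $(1+\varepsilon_n)$-embedding obtained by composing the inverse of $\phi_n$ with the inclusion; this yields a $(1+\varepsilon_{n+1})$-embedding $\phi_{n+1} : A_{n+1} \to \mathfrak{BL}$ satisfying $\|\phi_{n+1} \circ \phi_n^{-1}(\overline{b}_n) - \overline{b}_n\| < \varepsilon_n + \varepsilon_{n+1}$, which upon substituting $\overline{a}_n = \phi_n^{-1}(\overline{b}_n)$ is exactly (ii). To incorporate $y_{n+1}$, apply Lemma \ref{l:disposition-to-fraisse} symmetrically in $X$ with $A = A_n$ and $f = \phi_n : A_n \to \langle B_n \cup \{y_{n+1}\}\rangle$, obtaining a $(1+\varepsilon_{n+1})$-embedding back to $X$ whose image defines the new $A_{n+1}$ and whose inverse is $\phi_{n+1}$.

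Passing to the limit, the summability of $(\varepsilon_n)$ together with (ii) makes $(\phi_n(x_k))_{n \geq k}$ Cauchy in $\mathfrak{BL}$ for every fixed $k$, defining $\phi$ on the dense set $\{x_k\}$; the uniform moduli of continuity of $+$, $\vee$, $\wedge$, and $\|\cdot\|$ recorded in Section \ref{s:preliminaries}, together with $(1+\varepsilon_n) \to 1$, then extend $\phi$ uniquely to a lattice isometric embedding $X \to \mathfrak{BL}$. Condition (i) forces the image of $\phi$ to be dense, and as the isometric image of a complete space it is also closed, hence equals $\mathfrak{BL}$, so $\phi$ is the required surjective isometry. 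The main technical obstacle is the combinatorics of the induction --- propagating (i) and (ii) simultaneously while the distortions compose controllably across the two sides --- but this is a standard exercise once the tolerances $\varepsilon_n$ are chosen geometrically, since multiplicative overhead at each step is bounded by a fixed constant. All the substantive analytic content is encapsulated in Lemma \ref{l:disposition-to-fraisse} and in the fixed moduli of uniform continuity of the lattice operations.
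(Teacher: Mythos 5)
Your proposal is correct and follows essentially the same route as the paper: a Kubi\'s--Solecki style back-and-forth powered by Lemma \ref{l:disposition-to-fraisse}, building increasing finitely generated sublattices that absorb dense sequences on both sides, with $(1+\varepsilon_n)$-embeddings that approximately cohere on generators and converge to mutually inverse isometries. The only cosmetic difference is that the paper runs the argument between two arbitrary separable lattices of approximately universal disposition (so the statement for $\mathfrak{BL}$ is the special case), whereas you fix one side to be $\mathfrak{BL}$ and verify explicitly that it has the property.
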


\begin{proof}
	
	The proof follows that of Theorem 1.1 in \cite{kubis13}. Suppose $X$ and $Y$ are lattices of approximately universal disposition for finitely generated lattices. We will then construct a lattice isometry.  Let $(x_n)$, $(y_n)$ be dense in $X$ and $Y$, with $x_1 = 0$ and $y_1 \geq 0$.  Given $\varepsilon > 0$, let $\varepsilon_n \downarrow 0$ be a decreasing sequence such that $\varepsilon_n < 2^{-n-1}$. Throughout, we let $ \overline{x_n} = (x_1,...,x_n)$, and let $X_n = < \overline{x_n} >$, with the same notation for $ \overline{y_n}$ and $Y_n$.  Finally, let $f_1: X_1 \rightarrow Y_1$ be the trivial isometry. \\
	
	We begin our construction:  let $g_1:Y_1 \rightarrow X$ be a  $(1+\varepsilon_1)$-isometry.  Note this isometry exists. Now take $\tilde{X_2} = BL(X_2 \cup g_1(Y_1))$.  This lattice is also finitely generated by $\tilde{x_2} =  \overline{x_2} \cup g_1( \overline{y_1})$, so we have the map $g_1:Y_1 \rightarrow \tilde{X_2}$, and by Lemma \ref{l:disposition-to-fraisse}, pick a $(1+\varepsilon_2)$-embedding $f_2: \tilde{X_2} \rightarrow Y$ such that $d(f_2 g_1( \overline{y_1}),  \overline{y_1 }) < \frac{1}{2^2}$.  Now let $\tilde{Y_2} = BL(f_2(\tilde{X_2})  \cup Y_2)$.   Use Lemma \ref{l:disposition-to-fraisse} again to generate a $(1+\varepsilon_3)$- embedding $g_2: \tilde{Y_2} \rightarrow X$ such that $d( g_2 f_2( \tilde{x_2}) ,\tilde{x_2} )< \frac{1}{2^3}$. \\
	
	We can proceed inductively by constructing finitely generated subspaces $\tilde{X}_n = BL(X_n \cup g_{n-1}(\tilde{Y}_{n-1}) )$ and $\tilde{Y}_n = BL(Y_n \cup f_{n}(\tilde{X}_{n}) ) $ with corresponding tuples $\tilde{x_n}  = \overline{x_n} \cup g_{n-1}(\overline{y_{n-1}})$ and $\tilde{y_n}  = \overline{y_n} \cup f_{n}(\overline{y_{n}})$, as well as $ (1+\varepsilon_{2(n-1)}) $-embeddings $f_n: \tilde{X}_n \rightarrow \tilde{Y}_{n}$ and $(1+\varepsilon_{2n-1}) $-embeddings  $g_n: \tilde{Y}_n \rightarrow \tilde{X}_{n+1}$ such that $d(\tilde{y_n}, f_{n+1} g_n(\tilde{y}_n)) < \frac{1}{2^{2(n-1)}} $ and $d(\tilde{x_n}, g_n f_n(\tilde{x}_n)) < \frac{1}{2^{2n-1}} $.  Note that for each $k\leq n$,  we have 
	\begin{align*}
	&	\| f_{n+1}(x_k) - f_{n}(x_k) \|\\
	 = & \| f_{n+1}(x_k - g_nf_n(x_k) + g_nf_n(x_k)) - f_{n+1}(x_k) \| \\
 \leq & \| f_{n+1}(x_k - g_nf_n(x_k) ) \| + \| (f_{n+1} g_n - Id) f_{n}(x_k) \|  \\
\leq	& \frac{2}{2^{2(n-1)}}  + \frac{2}{2^{2(n-1)}} = \frac{1}{2^{2(n-2)}}
	\end{align*}
	
	so the sequence $f_n(x_k)_{n \geq k}$ is Cauchy.  The same is true for $g_n(y_k)_{n \geq k}$.  Let $f = \lim f_n$, and let $g = \lim g_n$. These exist, since $(x_k)$ and $y_k$ are dense in $X$ and $Y$.  Furthermore, $f$ and $g$ are inverses of each other, and they are each isometries. \end{proof}

\begin{comment}
Recall that proximity in the Fra\"iss\'e metric does not indicate the existence of a lattice isomorphism.  Indeed, by Lemma \ref{l:incomplete-fraisse}, any separable finitely generated lattice is arbitrarily close to a finite dimensional lattice in $\mathcal K'$, so the construction of $\mathfrak{ BL} $ using the same argument as in Section \ref{s:univ-dist} with countably finitely generated lattices will not necessarily yield a lattice of approximately universal disposition for finitely generated lattices.  However, with some adjustments, w
\end{comment}
We now construct a separable lattice of approximately universal disposition for finite dimensional lattices.  The approach is a modification of that in \cite[Section 5]{avi11} for the Gurarij space. \\

 Let $\mathfrak J$ be the collection of embeddings between finite dimensional lattices in $\mathcal K$. Since any such lattice isometrically embeds into $\mathcal U$, we can assume that $\mathfrak J$ is a set by limiting it to embeddings between finite dimensional sublattices of $\mathcal U$.   Let $\mathfrak J_0$ be a countable dense subset of $\mathfrak J$ in the following sense: for all embeddings $f:A \rightarrow B$ with $B$ finite dimensional and for all $\varepsilon > 0$, there exists $u:A' \rightarrow B' \in \mathfrak J_0$, and $(1+\varepsilon)$-isometries $\iota_A:A \rightarrow A'$ and $\iota_B:B\rightarrow B'$ such that $u\circ \iota_A = \iota_B \circ f$. In addition, for a separable lattice $X$, let $\mathfrak L(X)$ be the set of all maps $ v:A'\rightarrow X$ which are $C$-embeddings for some $C\geq 1$ with $A'\in \text{ Dom}(\mathfrak J_0)$.  Let $\mathfrak L_0$ be a countable subset of $ \mathfrak L(X)$ which is dense in the following sense: for all $\varepsilon > \varepsilon' > 0$ and $(1+\varepsilon')$-embeddings $f:A' \rightarrow X$ with $A' \in \text{Dom}(\mathfrak J_0)$, $\mathfrak L_0$ contains an $(1+\varepsilon)$-embedding $v:A' \rightarrow X$ such that $\| v - f \| < \varepsilon$.
  \\
 
Let $ X_{0,0} = X$, and suppose now that $X_{n,k}$ has been constructed.  Let $\mathfrak L_n$ be a countable subset of $ \mathfrak L(X_{n,0})$ which is dense in the manner described for $\mathfrak L_0 $ and $\mathfrak L(X)$.  Finally, let $\Gamma_n = \{ (u,v) \in \mathfrak J_0 \times \mathfrak L_n: \text{dom}(u) = \text{dom}(v) \}$, and let $((u^n_i,v^n_i))_i$ be an enumeration of $\Gamma_n$. We then construct $X_{n, k+1}$ by amalgamating as follows.  Given $(u^n_k,v^n_k) \in \Gamma_n$ with $v^n_k$ a $C$-embedding, we use part 2 of Corollary \ref{c:morph-to-met-amalg} to get an embedding $\iota:X_{n,k} \rightarrow X_{n,k+1}$ and $C$-embedding $w: \text{cod}(u^n_k) \rightarrow X_{n,k+1}$ such that the following diagram commutes:
\begin{center}
	\begin{tikzcd}
	\text{dom}(u^n_k) \arrow{r}{u^n_k} \arrow{d}{v^n_k} & \text{cod}(u^n_k) \arrow{d}{w} \\
	X_{n,k} \arrow{r}{\iota} & X_{n, k+1}
	\end{tikzcd}
\end{center}

Finally, we let $X_{n+1, 0} =\overline{ \bigcup_{k\in \N} X_{n,k}}$, and then let $X_{\omega_0}(X) = \overline{\bigcup_{n\in N} X_{n,0}}$.
\begin{theorem}\label{t:finite-disposition}
	 $X_{\omega_0}(X)$ is of approximately universal disposition for finite dimensional lattices.
\end{theorem}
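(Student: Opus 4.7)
The plan is to reduce the statement to a single amalgamation step internal to the construction, using that $\mathfrak{J}_0$ and $\mathfrak{L}_n$ are dense. Fix finite dimensional lattices $A \subseteq B$ with embedding $g : A \to B$, an embedding $f : A \to X_{\omega_0}(X)$, and $\varepsilon > 0$. I would choose an auxiliary $\delta > 0$ small, to be determined at the end so that $(1+\delta)^2 < 1 + \varepsilon$ and $\delta(2 + \delta) < \varepsilon$. First, I would use the density of $\mathfrak{J}_0$ to pick $u : A' \to B'$ in $\mathfrak{J}_0$ together with $(1+\delta)$-isometries $\iota_A : A \to A'$ and $\iota_B : B \to B'$ such that $\iota_B \circ g = u \circ \iota_A$.

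Next, since $f(A) \subseteq X_{\omega_0}(X) = \overline{\bigcup_n X_{n,0}}$ and $f(A)$ is finite dimensional, Lemma \ref{l:finite-lattice-approximation} supplies some $n$ and a $(1+\delta)$-isometry $\pi : f(A) \to X_{n,0}$ with $\|\pi - \mathrm{Id}_{f(A)}\| < \delta$. Setting $\tilde f := \pi \circ f : A \to X_{n,0}$ gives a $(1+\delta)$-embedding with $\|\tilde f(a) - f(a)\| \leq \delta\|a\|$, and therefore $\tilde f \circ \iota_A^{-1} : A' \to X_{n,0}$ is a $(1+\delta)^2$-embedding. Using the density of $\mathfrak{L}_n$ in $\mathfrak{L}(X_{n,0})$ I would pick $v \in \mathfrak{L}_n$ with $\mathrm{dom}(v) = A'$, $v$ a $(1+\delta')$-embedding for some $\delta' > (1+\delta)^2 - 1$ close to that value, and $\|v - \tilde f \circ \iota_A^{-1}\| < \delta$.

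Now $(u, v) \in \Gamma_n$, so $(u, v) = (u^n_k, v^n_k)$ for some $k$, and the construction of $X_{n,k+1}$ yields an isometric embedding $\iota : X_{n,k} \hookrightarrow X_{n,k+1}$ and a $(1+\delta')$-embedding $w : B' \to X_{n,k+1}$ with $w \circ u = \iota \circ v$. Set $h := w \circ \iota_B : B \to X_{n,k+1} \hookrightarrow X_{\omega_0}(X)$; this is a $(1+\delta)(1+\delta')$-embedding. To estimate the displacement, note that $h \circ g = w \circ \iota_B \circ g = w \circ u \circ \iota_A = \iota \circ v \circ \iota_A$, and since $\iota$ and the inclusion $X_{n,k+1} \hookrightarrow X_{\omega_0}(X)$ are isometric,
\begin{equation*}
\|h(g(a)) - f(a)\| \leq \|v(\iota_A(a)) - \tilde f(a)\| + \|\tilde f(a) - f(a)\| \leq \delta(1+\delta)\|a\| + \delta\|a\|,
\end{equation*}
where the first summand uses $\|v - \tilde f \circ \iota_A^{-1}\| < \delta$ together with $\|\iota_A\| \leq 1 + \delta$.

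The main obstacle is not conceptual but purely a matter of marshalling the four layers of approximation ($\iota_A, \iota_B, \pi, v$) so that their combined distortion $(1+\delta)(1+\delta')$ drops below $1+\varepsilon$ and the combined displacement $\delta(2+\delta)$ drops below $\varepsilon$. The one subtle point is that the density condition on $\mathfrak{L}_n$ produces $v$ whose distortion $1+\delta'$ is only \emph{strictly larger} than the distortion of the target $\tilde f \circ \iota_A^{-1}$, so $\delta'$ must be chosen \emph{after} $\delta$ but small enough that the final product $(1+\delta)(1+\delta')$ still lies below $1+\varepsilon$; this is always possible by shrinking the gap allowed in the density of $\mathfrak{L}_n$. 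Once $\delta$ is fixed sufficiently small at the outset, all the estimates above fit together and yield the required $h$.
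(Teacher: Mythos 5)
Your proof is correct and follows essentially the same route as the paper's: approximate the triple $(A,B,g)$ by a pair $(u,v)\in\Gamma_n$ using the density of $\mathfrak J_0$ and $\mathfrak L_n$ (you make the intermediate landing in some $X_{n,0}$ via Lemma \ref{l:finite-lattice-approximation} explicit, which the paper glosses over), then invoke the amalgamation square built into the construction of $X_{n,k+1}$ and compose. The only nit is that the density condition on $\mathfrak L_n$ ties the displacement bound to the \emph{new} distortion level, so it yields $\|v-\tilde f\circ\iota_A^{-1}\|<\delta'$ rather than $<\delta$; this only perturbs the final constant and does not affect the argument.
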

\begin{proof}
Let $f:A\rightarrow B$ and $g:A \rightarrow X_{\omega_0}(X)$ be embeddings, with $A$ and $B$ finite dimensional.  Given $\varepsilon > 0$, by density of embeddings and spaces in $\mathfrak J_0$ and $\mathfrak L_n$, pick an embedding $u:A'\rightarrow B'$ with $u\in \mathfrak J_0$  such that there are  $(1+\varepsilon/2)$-isometries $i_1:A\rightarrow A'$ and   $i_2:B\rightarrow B'$ with $u\circ i_1 = i_2 \circ f$. Since $g\circ i_1^{-1}$ is also a $(1+\varepsilon/2)$-embedding, there exists $n$ and a $(1+\varepsilon)$-embedding $v:A' \rightarrow X_{n,0}$ such that $v\in \mathfrak L_n$ and $\| v - g \circ i_1^{-1}\| < \varepsilon$. Then $\| v\circ i_1 - g \| < (1+\varepsilon) \varepsilon$.  Using the construction above, we thus have the following commutative diagram:
\begin{center}
	\begin{tikzcd}
A \arrow{r}{f} \arrow{d}{i_1} & B \arrow{d}{i_2} \\
A' \arrow[r]{}{u} \arrow[d]{}{v}    &  B' \arrow{d}{\iota_{B'}} \\
X_{n, 0} \arrow{r}{\iota} & X_{n+1,0}
\end{tikzcd}
\end{center}
In the diagram,  $i_1$, $i_2$, $v$ and $\iota_{B'}$ are each $(1+\varepsilon)$-embeddings. We now let $ h:B \rightarrow X_{\omega_0}(X) = \iota_{B'} \circ i_2$.  This is clearly a $(1+\varepsilon)^2$-embedding.  Finally, for all $x\in \mathbf{B}(A)$, we have 
\begin{align*}
\|g(x) - hf(x)\| &= \|g(x)  -  \iota_{B'}i_2 f(x) \| \\
					& = \|g(x) - vi_1(x) \| < (1+\varepsilon)\varepsilon.  
\end{align*}

Thus $X_{\omega_0}(X)$ is of approximately universal disposition for finite dimensional lattices. \end{proof}

One can use a similar argument to construct (non-separable) lattices of universal disposition by amalgamating over all combinations of embeddings of separable spaces $\omega_1$ times rather than selecting a countable subset each step (see \cite[Theorem 5.3]{AvilesTradecete2020}).\\

We can adapt our construction with additional conditions as a way to discern the structure of $\mathfrak{BL}$. For instance, we can start with $X \in \mathcal K'$ in particular, and then inductively construct increasing lattices $X = X_0 \subseteq X_1 \subseteq... \subseteq X_n \subseteq X_{n+1} \subseteq ....$ with each $X_n \in \mathcal K'$.  First, we ensure that $\mathfrak J_0$ and each $\mathfrak L_n$ consist only of maps between lattices in $\mathcal K'$. This is possible assuming $X_{0}$ is in $\mathcal K'$ and by Lemma \ref{l:fd-into-1infty-l1}.  Then given $X_n \in \mathcal K'$, we can construct $X_{n+1}\in \mathcal K'$ by applying parts 2 and 3 of Corollary \ref{c:morph-to-met-amalg} over the $n^{th}$ pair $(u^k_n,g^k_n) \in \Gamma_k$ for each $k < n$, followed by the first $n$ pairs in $\Gamma_n$.  

\begin{center}
	\begin{tikzcd}[scale cd=0.65]
	\text{dom}(u_n^1) \arrow{r}{u_n^1} \arrow{d}{v_n^1} & \text{cod}(u_n^1) \arrow{dr} & \text{dom}(u_n^2) \arrow{r}{u_n^2} \arrow{d}{v_n^2} & \text{cod}(u_n^2) \arrow{dr} & \dots & \text{dom}(u_n^n) \arrow{r}{u_n^n} \arrow{d}{v_n^n} & \text{cod}(u_n^n) \arrow{d} &\\
	X_n \arrow{rr} & & * \arrow{rr} & & * \arrow{r}[description]{\dots} &  * \arrow{r} & X_{n+1}
	\end{tikzcd}
\end{center}

   Thus for any pair $(u_n^k, v_n^k) \in \Gamma_k$ with $v_n^k$ a $C$-embedding for some $C$, there exists some $m > k$ (here $m =\max(k+1, n+1)$ ) and a $C$-embedding $\iota:\text{cod}({u_n^k})\rightarrow X_m$ such that $ v_{n}^k = \iota \circ u_n^k$. The lattice $\overline{ \bigcup_{n} X_{n}}$ is then a limit of finite dimensional lattices and is thus finitely branchable, and a small variation of the argument in Theorem \ref{t:finite-disposition} can be used to show that it is also of approximately universal disposition for finite dimensional lattices.  It turns out, however, that we have derived an alternate, simplified construction of $\mathfrak{ BL}$:

\begin{theorem} \label{t:finite-branch-unique}
	Any two finitely branchable lattices of approximately universal disposition for finite dimensional lattices are isometric. In particular, they are isometric to $\mathfrak{ BL}$ and are thus of approximately universal disposition for finitely generated lattices.
\end{theorem}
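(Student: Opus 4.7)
The plan is to adapt the back-and-forth argument of Theorem \ref{t:1e-isometric-unique} to the finite dimensional setting, where the finite branchability hypothesis lets us work exclusively with finite dimensional amalgams. Let $X$ and $Y$ be two finitely branchable lattices of approximately universal disposition for finite dimensional lattices, and write them as $X = \overline{\bigcup_n X_n}$, $Y = \overline{\bigcup_n Y_n}$ where $(X_n),(Y_n)$ are increasing chains of finite dimensional sublattices. After fixing a summable sequence $\varepsilon_n \downarrow 0$, I would construct inductively increasing finite dimensional sublattices $\tilde X_n \subseteq X$ (with $X_n \subseteq \tilde X_n$) and $\tilde Y_n \subseteq Y$ (with $Y_n \subseteq \tilde Y_n$), together with $(1+\varepsilon_n)$-embeddings $f_n : \tilde X_n \to Y$ and $g_n : \tilde Y_n \to X$, satisfying Cauchy conditions $\|f_{n+1}|_{\tilde X_n}-f_n\|<\varepsilon_n$, $\|g_{n+1}|_{\tilde Y_n}-g_n\|<\varepsilon_n$ and approximate-inverse conditions $\|g_n\circ f_n - \mathrm{incl}\|<\varepsilon_n$, $\|f_{n+1}\circ g_n - \mathrm{incl}\|<\varepsilon_n$ on the respective domains. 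Taking limits along these sequences will yield mutually inverse surjective isometries between $X$ and $Y$.

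The inductive step runs as follows. Given $f_n:\tilde X_n\to Y$, I would first invoke Lemma \ref{l:finite-lattice-approximation} inside the finitely branchable lattice $Y$ to find an index $m_n$ and a finite dimensional sublattice $\tilde Y_n \subseteq Y$ (enlarged to contain $Y_n$) equipped with a near-isometric copy of $f_n(\tilde X_n)$. Composing with this approximating map gives an approximate embedding $\tilde X_n \to \tilde Y_n$; by Theorem \ref{t:isomorphism-to-isometry} this can be converted, at the cost of a small equivalent renorming of $\tilde Y_n$ (which keeps it finite dimensional), into an exact embedding into a finite dimensional lattice $B$. Now I would apply approximate universal disposition of $X$ to the pair consisting of the inclusion $\tilde X_n \hookrightarrow X$ and the exact embedding $\tilde X_n \to B$, obtaining a $(1+\delta)$-embedding $B \to X$ that approximately inverts $f_n$ on $\tilde X_n$; undoing the renorming then gives $g_n:\tilde Y_n \to X$ with only slightly worse distortion. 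A symmetric step using approximate universal disposition of $Y$ produces $\tilde X_{n+1}$ and $f_{n+1}$, with the indices chosen so that the original filtrations $(X_n), (Y_n)$ are exhausted.

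For the second assertion, $\mathfrak{BL}$ is finitely branchable by Theorem \ref{t:finitely-branchable-fraisse} and of approximately universal disposition for finite dimensional lattices as the Fra\"iss\'e limit of $\mathcal{K}$, so the uniqueness just proved gives $X \cong \mathfrak{BL} \cong Y$, and the stronger approximately universal disposition for finitely generated lattices is inherited from $\mathfrak{BL}$. The principal obstacle will be the bookkeeping of accumulated errors at each step---coming from the embeddings $f_n,g_n$ themselves, from the finite dimensional approximation of $f_n(\tilde X_n)$ inside $Y$, and from the renormings used to invoke approximate universal disposition---but each of these contributions is bounded by a constant multiple of $\varepsilon_n$, so the telescoped sums can be made arbitrarily small by choosing $\varepsilon_n$ to decay rapidly enough.
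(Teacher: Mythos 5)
Your proposal is correct and follows essentially the same route as the paper: a back-and-forth construction of approximate embeddings between the finite dimensional pieces of the two spanning-tree filtrations, using Lemma \ref{l:finite-lattice-approximation} to land in finite dimensional sublattices, Theorem \ref{t:isomorphism-to-isometry} (in place of the paper's appeal to Lemma \ref{l:disposition-to-fraisse}) to upgrade near-embeddings to exact ones before invoking approximate universal disposition, and telescoping error estimates to pass to mutually inverse isometries in the limit. The identification with $\mathfrak{BL}$ via Theorem \ref{t:finitely-branchable-fraisse} is also handled as in the paper.
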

\begin{proof}
Suppose $X$ and $Y$ are two finitely branchable separable lattices of approximately universal disposition for finite dimensional lattices. As in the proof of Theorem \ref{t:1e-isometric-unique}, we simply construct an isometry $f:X\rightarrow Y$ with its inverse $g:Y\rightarrow X$.\\

Let $(X_n)$ and $(Y_n)$ be sequences of finite dimensional lattices generated by corresponding spanning trees, (here we let $X_n = \text{span}(\{ x_{\sigma}: |\sigma|= n \})$ such that $X = \overline{\bigcup X_n}$ and $Y = \overline{\bigcup Y_n}$. Let $\varepsilon_n \downarrow 0$ be a sequence such that $\prod (1+\varepsilon_n) < 1+\varepsilon$. We then proceed just like in the proof of Theorem \ref{t:1e-isometric-unique}, but with a modification.  Let $f_1:X_0 \rightarrow Y_0$ be an isometry (this is possible because $X_0$ and $Y_0$ are simply 1-dimensional lattices spanned by $x_\emptyset$ and $y_\emptyset$, respectively). Now, let $g_1:Y_1 \rightarrow X$ be a $(1+\varepsilon_1)$-embedding such that $\|x_\emptyset - g_1f_1(x_\emptyset) \| \leq \frac{1}{2}$,  By density of $\bigcup X_n$, and since $Y_1$ is finite dimensional, we can in fact ensure that $g_1$ maps into some $X_{k_2}$ for some $k_2\in \N$.  \\

 Rather than generating lattices $\tilde{X}_n$ and $\tilde{Y}_n$, using Lemmas \ref{l:disposition-to-fraisse} and \ref{l:finite-lattice-approximation}, we can pick $(1+\varepsilon_{2(n-1)})$-embeddings $f_n: X_{k_n} \rightarrow Y_{k'_n}$ and $(1+\varepsilon_{2n-1})$-embeddings $g_n: Y_{k'_n} \rightarrow X_{k_{n+1}}$ such that  $\|g_nf_n - Id_X \| < \frac{1}{2^{2n-1}}$, and similarly, $\| f_ng_{n-1} - Id_Y \| \leq \frac{1}{2^{2(n-1)}}$. Then 
 
 	\begin{align*}
 	\| f_{n+1} - f_{n} \|= & \| f_{n+1} - f_{n+1}g_nf_n + f_{n+1}g_nf_n - f_{n} \| \\
 \leq & \| f_{n+1}\| \| Id_{X}- g_nf_n \| + \| f_{n+1} g_n - Id_Y \| \|f_{n} \|  \\
 \leq	& \frac{2}{2^{2(n-1)}}  + \frac{2}{2^{2(n-1)}} = \frac{1}{2^{2(n-2)}}
 \end{align*}
 
 A similar argument can be made for the $g_n$'s. Thus for all $n$ and for all $x \in X_{k_n}$ the sequence $(f_m(x))_{m > n}$ is Cauchy.  The same is true for any $y \in Y_{k'_n}$ and sequence $(g_m(y))_{m>n}$.  Let $f = \lim f_n$ and $g = \lim g_n$, and we are done.\end{proof}

The assumption of finite branchability is essential in the above proof.  It is currently unknown, however, if there are lattices which are not finitely branchable but are of approximately universal disposition for finite dimensional lattices.\\

Since $\mathfrak{ BL}$ is finitely branchable, it contains many non-trivial projection bands.  Recall that a ideal sublattice $B \subseteq X$ is a\textit{ band }if for all $x\in X$ and sets $A \subseteq B$, if $x = \sup A,$ then $x\in B$.  Given a set $A\subseteq X  $, we let $A^\perp = \{ x\in X: x \perp a \text{ for all } a\in A \}$. $A^\perp$ is itself a band, and if $B$ is a band, then $B^{\perp \perp} = B$ (see \cite[Theorem 1.28]{abram}).  A band $B$ is a \textit{ projection band} if $X = B \oplus B^{\perp}$; that is, every $x\in X$ can be uniquely written as $x_1 +x_2$ with $x_1 \in B$ and $x_2 \in B^{\perp}$.  Note that if $B$ is a projection band, it induces a lattice projection $P:X\rightarrow B$, that is, a contractive lattice homomorphism onto $B$ with $P^2 = P$ and in particular, $P|_B = Id |_B$. Let $(x_\sigma)_{\sigma \in T}$ be a linearly dense spanning tree in $\mathfrak{ BL}$. Then it is clear that $\mathfrak{BL} = \oplus_{|\sigma| = n} \overline{\text{span}}(\{x_\tau:\tau \supseteq \sigma\})$, and that each sublattice $\overline{\text{span}} (\{ x_\tau: \tau \supseteq \sigma \})$ is a projection band. We thus have the following:

\begin{theorem}\label{t:bands-are-also-BL}
	Every non-trivial projection band in $\mathcal B\subseteq \mathfrak{ BL}$ is itself isometric to $\mathfrak{BL}$.
\end{theorem}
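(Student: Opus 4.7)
\medskip

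\noindent\textbf{Plan.} The plan is to reduce to Theorem~\ref{t:finite-branch-unique}: since $\mathcal{B}$ is a closed sublattice of the separable lattice $\mathfrak{BL}$, it is separable, and it suffices to prove (i) $\mathcal{B}$ is finitely branchable, and (ii) $\mathcal{B}$ is of approximately universal disposition for finite dimensional lattices. Then Theorem~\ref{t:finite-branch-unique} will immediately yield $\mathcal{B}\cong\mathfrak{BL}$.

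\medskip

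\noindent\textbf{Step 1 (Finite branchability).} Let $P:\mathfrak{BL}\to\mathcal{B}$ be the band projection, which is a contractive, positive, disjointness-preserving linear map, hence a lattice homomorphism. By Theorem~\ref{t:finitely-branchable-fraisse}, fix a finitely branching tree $(x_\sigma)_{\sigma\in T}\subseteq\mathfrak{BL}_+$ whose linear span is dense in $\mathfrak{BL}$. Set $y_\sigma:=Px_\sigma\in\mathcal{B}_+$. Linearity gives $y_\sigma=P(\sum_b x_{\sigma^\frown b})=\sum_b y_{\sigma^\frown b}$, and since $P$ is a lattice homomorphism it preserves the disjointness of $(x_{\sigma^\frown b})_b$. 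After pruning the (possibly empty) branches where $y_\sigma=0$, the family $(y_\sigma)$ is a finitely branching tree in $\mathcal{B}_+$. Moreover $P$ is surjective and continuous, so $P(\spn(x_\sigma))=\spn(y_\sigma)$ is dense in $\mathcal{B}$. The same argument applied to $Q=I-P$ shows that $\mathcal{B}^\perp$ is also finitely branchable and in particular, by Theorem~\ref{t:fb-implies-fg}, finitely generated as a Banach lattice.

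\medskip

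\noindent\textbf{Step 2 (Approximately universal disposition, setup).} Let $A$, $B$ be finite dimensional lattices, $f:A\to\mathcal{B}$ and $g:A\to B$ embeddings, and $\varepsilon>0$. Because $\mathcal{B}^\perp$ is finitely branchable, pick an increasing sequence of finite dimensional sublattices $\mathcal{B}^\perp_N\subseteq\mathcal{B}^\perp$ obtained from the spanning tree $(Qx_\sigma)$, with $\overline{\bigcup_N\mathcal{B}^\perp_N}=\mathcal{B}^\perp$, and let $u_N=\sum_{|\sigma|=N}Qx_\sigma$, a strong unit of $\mathcal{B}^\perp_N$. For each $N$, form the finite dimensional lattice $B\oplus\mathcal{B}^\perp_N$ and embed $A\hookrightarrow B\oplus\mathcal{B}^\perp_N$ via $a\mapsto(g(a),0)$. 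The map $A\oplus\mathcal{B}^\perp_N\to\mathfrak{BL}$, $(a,c)\mapsto f(a)+c$, is a lattice embedding (since $f(A)\subseteq\mathcal{B}$ and $\mathcal{B}^\perp_N\subseteq\mathcal{B}^\perp$ are disjoint bands). Since $\mathfrak{BL}$ is of approximately universal disposition for finite dimensional lattices, for any $\varepsilon'>0$ we obtain a $(1+\varepsilon')$-embedding $\tilde h_N:B\oplus\mathcal{B}^\perp_N\to\mathfrak{BL}$ with $\|\tilde h_N(g(a),0)-f(a)\|<\varepsilon'$ for all $a$ in the unit ball of $A$ and $\|\tilde h_N(0,c)-c\|<\varepsilon'$ for $c$ in the atoms of $\mathcal{B}^\perp_N$. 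Define $h_N:B\to\mathcal{B}$ by $h_N=P\circ\tilde h_N(\cdot,0)$; commutativity on $A$ is automatic since $f(a)\in\mathcal{B}=\ran P$, so $h_N\circ g(a)\approx Pf(a)=f(a)$.

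\medskip

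\noindent\textbf{Step 3 (Forcing the image into $\mathcal{B}$, main obstacle).} The crux is showing that for $N$ large and $\varepsilon'$ small, $h_N$ is a genuine $(1+\varepsilon)$-embedding. Decompose $\tilde h_N(b,0)=\alpha+\beta$ with $\alpha\in\mathcal{B}$, $\beta\in\mathcal{B}^\perp$; we need $\|\beta\|\le\varepsilon\|b\|$ uniformly on $B$. From $(b,0)\perp(0,u_N)$ in $B\oplus\mathcal{B}^\perp_N$ and the lattice-homomorphism property of $\tilde h_N$ we get $|\tilde h_N(b,0)|\wedge|\tilde h_N(0,u_N)|=0$; projecting onto $\mathcal{B}^\perp$ and using $|\alpha|\wedge\mathcal{B}^\perp=0$, this reduces to $|\beta|\wedge|\tilde h_N(0,u_N)|=0$. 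Since $\tilde h_N(0,u_N)$ is within an absolute multiple of $\varepsilon'$ of $u_N$, the uniform continuity of $\wedge$ yields $\||\beta|\wedge u_N\|\lesssim\varepsilon'$. The remaining obstacle—controlling $\|\beta\|$ from $\||\beta|\wedge u_N\|$—is where care is needed: for the part of $\beta$ lying in the principal ideal generated by $u_N$, the strong-unit property of $u_N$ in $\mathcal{B}^\perp_N$ converts the bound on $\||\beta|\wedge u_N\|$ into a bound on the corresponding norm, while the part of $\beta$ outside this ideal lies in the decreasing intersection $\bigcap_N(\mathcal{B}^\perp_N)^\perp\cap\mathcal{B}^\perp=\{0\}$. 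Combining this with the fact that $B$ is finite dimensional (so $\tilde h_N(B,0)$ is a fixed-finite-dimensional subspace for each $N$), one selects first $N$ large enough that the ideal generated by $u_N$ is sufficiently rich relative to $\tilde h_N(B,0)$, then $\varepsilon'$ small enough that the within-ideal bound delivers $\|\beta\|<\varepsilon\|b\|$. The anticipated hard part is precisely this last quantitative interplay between the index $N$ (governing how much of $\mathcal{B}^\perp$ the marker $u_N$ captures) and the distortion parameter $\varepsilon'$ (depending on $N$ through the approximate disposition of $\mathfrak{BL}$); an iterative / diagonal selection argument on $N$ and $\varepsilon'$, combined with a compactness reduction using finite-dimensionality of $B$, delivers the required $h_N$.
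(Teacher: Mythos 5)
Your framework is the paper's: establish that $\mathcal B$ is finitely branchable (your Step 1 is exactly the paper's argument, via the tree $(Px_\sigma)$) and that $\mathcal B$ is of approximately universal disposition for finite dimensional lattices, then invoke Theorem \ref{t:finite-branch-unique}. The gap is in Step 3, and it is genuine. All that the disjointness $(b,0)\perp(0,u_N)$ buys you is $|\tilde h_N(b,0)|\wedge|\tilde h_N(0,u_N)|=0$, hence $\||\beta|\wedge u_N\|$ at most a constant times $\varepsilon'$; but in a Banach lattice this does \emph{not} bound $\|\beta\|$. Already in $\ell_\infty^2$, with $u=(1,s)$ and $y=(0,1)$ one has $\|y\wedge u\|=s$ while $\|y\|=1$; the comparison $\|\beta\|\le\lambda\,\||\beta|\wedge u_N\|$ requires $|\beta|\le\lambda u_N$, and nothing in your construction controls $\lambda$ (the strong-unit constant of $u_N$ lives in $\mathcal B^\perp_N$, whereas $\beta$ ranges over the much larger $\mathcal B^\perp$). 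Note also that since $\sum_{|\sigma|=N}x_\sigma=x_\emptyset$ for every $N$, your marker $u_N=Qx_\emptyset$ does not actually depend on $N$, so the proposed limiting argument over $N$ (and the appeal to $\bigcap_N(\mathcal B^\perp_N)^\perp\cap\mathcal B^\perp=\{0\}$, which in any case concerns a fixed element rather than the $N$-dependent $\beta_N$) has nothing to bite on. Concretely, approximate universal disposition hands you \emph{some} extension $\tilde h_N$ with no say in where it sends an atom $b_i\notin g(A)$: it may legitimately place $\tilde h_N(b_i,0)$ on a normalized positive element of $\mathcal B^\perp$ whose meet with $u_N$ is tiny, in which case $P\tilde h_N(b_i,0)$ is nearly $0$ and $h_N$ is far from an embedding. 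No choice of $N$ and $\varepsilon'$ rules this out.

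The paper closes exactly this gap by replacing the disjointness marker with an order-domination argument. One first perturbs $f$ so that $f(A)$ fully supports $B$, which yields $\sum_i b_i\le N f\big(\sum_i a_i\big)$ for some $N$; one carries along a weak unit $x_{\mathcal B}$ of $\mathcal B$ dominating $\sum_i a_i$, and arranges the finite dimensional approximation so that the image of $\mathbf{B}(B)_+$ is dominated by an element $Nx'_B$ with $\|Nx'_B-Nx_{\mathcal B}\|<\delta$ and $x_{\mathcal B}\ge P(x'_B)$. Then for any $z$ in that image one has $Nx'_B\wedge z=z$ and $Nx_{\mathcal B}\wedge z=Pz$, so $\|z-Pz\|\le\|Nx'_B-Nx_{\mathcal B}\|<\delta$: an order upper bound that is close to $\mathcal B$ forces the entire $\mathcal B^\perp$-component of the image to be uniformly small. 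This is the quantitative control your construction is missing; domination under $\wedge$ transfers norm estimates, whereas disjointness from a marker does not. If you want to salvage your write-up, the fix is to insert the full-support reduction and the weak-unit domination step, at which point $u_N$ and the lattices $\mathcal B^\perp_N$ become unnecessary.
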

 \begin{proof}
 	First note that $\mathcal B$ itself is finitely branchable. Given a finitely branching tree $(x_\sigma)_{\sigma \in T} \subseteq \mathfrak{BL}$ and lattice projection $P:\mathfrak{ BL}\rightarrow \mathcal B$, we get $(P(x_\sigma))_{\sigma \in T}$ as a spanning tree for $\mathcal B$. \\
 	
 	 We now show that $\mathcal B$ is of approximately universal disposition for finite dimensional lattices.  By Theorem \ref{t:finite-branch-unique}, this implies that $\mathcal B$ must in fact be isometric to $\mathfrak{BL}$. \\
 	
  	 Let $\mathfrak J_0$,  $\Gamma_n$, $\mathfrak L_n$,  and $X_n$ denote the sets and lattices used in the construction preceding Theorem \ref{t:finite-branch-unique}.  Let $A \subseteq \mathcal B$, and $f:A\rightarrow B$ be an isomeric embedding between finite dimensional $A$ and $B$. Given $\varepsilon > 0$, the goal is to construct a $(1+\varepsilon)$-embedding $g:B\rightarrow \mathcal B$ such that $\|g\circ f - Id|_A \| < \varepsilon$. \\
  	 
  	 We begin with the case where $f(A)$ fully supports $B$.  Let $(a_i)_i$ and $(b_i)_i$ be finite sequences enumerating the atoms of $A$ and $B$, and let $N$ be such that $Nf\big(\sum_i a_i\big) \geq \sum_i b_i$.  Finally, let $x_{\mathcal B}$ be a weak unit in $\mathcal B$ such that $\sum_i a_i \leq x_{\mathcal B}$.  Then given $\delta > 0$, by Lemma \ref{l:finite-lattice-approximation} and density of $\cup X_n$ in $\mathfrak {BL}$, there exist $m \in \N$, a $(1+\delta)$-embedding $v:A'\rightarrow X_m$ with $v \in \mathfrak L_m$, $u:A'\rightarrow B'$ with $u \in \mathfrak J_0$,  $j_1: A \rightarrow A'$, $j_2: B \rightarrow B'$, and $x'_B \in X_m$ such that 
 	 
 	 \begin{enumerate}
 	 	\item For all $i$, $\|  Na_i - Nvj_1(a_i) \| < \frac{\delta}{\dim A}$ 
 	 	\item $j_1$ and $j_2$ are $(1+\delta)$- isometries with $u \circ j_1 = j_2 \circ f$
 	 	\item $\|Nx_{\mathcal B} - Nx'_B\| < \delta$ and $x_B' \geq v j_1\big(\sum_i a_i\big)$.
 	 \end{enumerate}
	
If necessary, we can replace $x_{\mathcal B}$ with $x_{\mathcal B} \vee P(x_B')$.  All prior conditions will still be fulfilled, so we can assume $x_{\mathcal B} \geq P(x'_B)$.   Now $(u,v) \in \Gamma_m$, so there exists some $n > m$ such that $B'$ also $(1+\delta)$-embeds into $X_n$: 
 	
 \begin{center}
 	\begin{tikzcd}
 	A \arrow{r}{f} \arrow{d}{j_1} & B \arrow{d}{j_2} \\
 	{A'} \arrow{r}{u} \arrow{d}{v}    &  B' \arrow{d}{\iota_{B'}} \\
 	X_m \arrow{r}{\iota} & X_n
 	\end{tikzcd}
 \end{center}

Furthermore, \begin{align*}
(*) \ Nx'_B &\geq N\sum_i v j_1(a_i) = N\sum_i \iota v j_1(a_i) =  N\sum_i \iota_{B'} u j_1(a_i) \\
& = \sum_i \iota_{B'}\iota_2\big( N\sum_i f(a_i)\big) \geq \iota_{B'}\iota_2 \big( \sum_i b_i \big).
\end{align*}.

Let $g:B\rightarrow \mathcal B$ be defined by $g = P\circ \iota_{B'} \circ j_2$. By $(*)$, we have that $Nx_{\mathcal B} \geq NP(x'_B) \geq g\big(\sum_i b_i\big)$. 
%$$(\sum i_{B'}j_2(b_i))\wedge x_B = \sum( i_{B'}j_2(b_i)\wedge x_B), $$ 
Furthermore, since $P$ is a band projection, it follows that for all $\sum_i c_i \iota_{B'}j_2 (b_i) $ with $0 \leq c_i \leq 1$,
$$Nx_{\mathcal B}\wedge \big(\iota_{B'}j_2 \sum_i c_i b_i\big) = Nx_{\mathcal B}\wedge P\iota_{B'}j_2\big(\sum_i c_ib_i\big)  = g\big(\sum_i c_ib_i\big).$$
 Therefore 
\begin{align*}
&\bigg\| \sum_i c_i \iota_{B'}j_2(b_i) - \sum_i c_ig(b_i) \bigg\|  = \\
&\bigg\| Nx'_{B}\wedge \sum_i c_i\iota_{B'}j_2(b_i) - Nx_{\mathcal B} \wedge \sum_i c_i\iota_{B'}j_2(b_i) \bigg\| < \delta,
\end{align*} 
%Thus  $ \frac{1-\delta}{1+\delta} \leq \| \sum_i c_i(b_i\wedge(Nx_{B})) \| \leq \frac{1+\delta}{1-\delta}$, 
so $\|\iota_{B'}\circ j_2 - g\| < \delta,$ and since $\iota_{B'}\circ j_2$ is a $(1+\delta)^2$-embedding, $g$ is a $\frac{(1+\delta)^3}{1-\delta}$-embedding. Finally, since the diagram commutes, we have $gf(a_i) = P\iota_{B'}j_2f(a_i) = Pvj_1(a_i)$. Since $a_i \in \mathcal B$, $P(a_i) = a_i$, so for all $\sum_i c_i a_i \in \mathbf S(A)$, by condition 1 we have
\begin{align*}
&\bigg\|gf\big(\sum_i c_ia_i\big) -\sum_i c_ia_i \bigg\|  = \\
&\bigg\|\sum_i c_i \big( Pvj_1(a_i) -  P(a_i)\big) \bigg\| \leq \sum_i c_i \|  vj_1(a_i) - a_i \|< \delta.
\end{align*}
Now $\delta$ can be arbitrarily small, so assume that $\frac{(1+\delta)^3}{1-\delta} < 1+\varepsilon$.  Since $g = P\circ \iota_{B'} \circ j_2$, it sends elements of $B$ into $\mathcal B$ thanks to composition by $P$. Thus $g$ satisfies the requirements.\\

Suppose now that $B$ is not fully supported by $A$. For all $\delta > 0$, we can perturb $f$ with a $(1+\delta)$-embedding $f': A \rightarrow B$ such that $f'(A)$ fully supports $B$ and $\|f - f' \| < \delta$.  By Lemma \ref{t:isomorphism-to-isometry}, let $B'$ be a copy of $B$ with a $(1+\delta)$-equivalent renorming so that $f':A\rightarrow B'$ is an embedding.

Now use the result above to get a $(1+\delta)$ embedding $g:B'\rightarrow  \mathcal B$ such that $\| Id|_A - g\circ f' \| < \delta.  $  

\begin{center}
	\begin{tikzcd}
	A \arrow{r}{f} \arrow{dr}{f'} \arrow{dd}{Id|_A} & B \arrow{d}{Id_B} \\
	& B'\arrow{ld}{g} \\
	\mathcal B
	\end{tikzcd}
\end{center}

Now with the original norm on $B$, $g$ is a $(1+\delta)^2$-embedding, and under the norm on $B'$, $\|f - f'\|_{B'} < (1+\delta)\delta. $ Then
$$\| g\circ f - Id \| \leq \|g\circ(f - f') \| + \|g\circ f' -Id\| \leq  \|g\| \|f-f'\|_{B'} + \delta < (1+\delta)^2 \delta + \delta. $$

We can then let $\delta$ be arbitrarily small.  Thus $\mathcal B$ is of approximately universal disposition for finite dimensional lattices. \end{proof}

%\bibliographystyle{abbrv}
%\bibliography{../bigfatbib}

\end{document}